\def\bbe{\mathbb E}
\def\bbp{\mathbb P}
\def\bbr{\mathbb R}
\def\bbn{\mathbb N}
\def\bbone{{\mathbbm 1}}
\def\ugam{\Gamma}
\def\lam{\lambda}
\def\caln{{\cal N}}
\def\calh{{\cal H}}
\def\cala{{\cal A}}
\theoremstyle{plain}
\newtheorem{thm}{Theorem}[section]
\newtheorem{exmp}{Example}[section]
\newtheorem{cor}{Corollary}[section]
\newtheorem{prop}{Proposition}[section]
\newtheorem{lem}{Lemma}[section]
\theoremstyle{definition}
\newtheorem{rem}{Remark}[section]
\numberwithin{equation}{section}
\begin{document}
\title{On Stein's Method for Infinitely Divisible Laws With Finite First Moment}
\author{Benjamin Arras\thanks{Universit\'e Pierre et Marie Curie, Laboratoire Jacques-Louis Lions, 4 place Jussieu, 
75005 Paris; arrasbenjamin@gmail.com.}\; and Christian Houdr\'e\thanks{Georgia Institute of Technology, 
School of Mathematics, Atlanta, GA 30332-0160; houdre@math.gatech.edu. 
I would like to thank the Institute for Mathematical Sciences of the National University of Singapore for its hospitality 
and support as well as the organizers of  the 2015 Workshop on New Directions in Stein's Method.  
My participation in this workshop regenerated my interests on these topics and ultimately 
led to the present paper.   
Research supported in part by the grants \# 246283 and \# 524678 from the Simons Foundation.  
This material is also based, in part, upon work supported by the 
National Science Foundation under Grant No. 1440140, while this author was in residence at the Mathematical 
Sciences Research Institute in Berkeley, California, during the Fall semester of 2017.  
Finally, many thanks are due to the Universit\'e Pierre et Marie Curie for its 
hospitality while part of this research was carried out.  
\newline\indent  Keywords:  Infinite Divisibility, Self-decomposability, Stein's Method, Stein-Tikhomirov's Method, Weak Limit Theorems, 
Rates of Convergence, Kolmogorov Distance, 
Smooth Wassertein Distance.
\newline\indent MSC 2010: 60E07, 60E10, 60F05.}}

\maketitle
\begin{abstract}
We present, in a unified way, a Stein methodology for infinitely 
divisible laws (without Gaussian component) having finite first moment. Based on a correlation representation, we obtain a characterizing non-local Stein operator which boils down to classical Stein operators in specific examples. Thanks to this characterizing operator, we introduce various extensions of size bias and zero bias distributions and prove that these notions are closely linked to infinite divisibility. Combined with standard Fourier techniques, these extensions also allow obtaining explicit rates of convergence for compound Poisson approximation in particular towards the symmetric $\alpha$-stable distribution. Finally, in the setting of non-degenerate self-decomposable laws, by semigroup techniques, we solve the Stein equation induced by the characterizing non-local Stein operator and obtain quantitative bounds in weak limit theorems for sums of independent random variables going back to the work of Khintchine and L\'evy.
\end{abstract}

\section{Introduction}
Since its inspection in the normal setting (see \cite{Stein}) Stein's
method of approximation has enjoyed tremendous successes in both theory
and applications.  Starting with Chen's \cite{C} initial extension to the
Poisson case the method has been developed for various
distributions such as compound Poisson, geometric, negative binomial,
exponential, Laplace. (We refer the reader to Chen,
Goldstein and Shao~\cite{CGS} or Ross~\cite{R} for good introductions to the method,
as well as more precise and complete references.)

The methodology developed for the distributions just mentioned is
often ad hoc, the fundamental equations changing from one law 
to another and it is therefore not always easy to see their common 
underlying thread/approach.  There is, however, a class of random variables 
for which a common
methodology is possible.  The class we have in mind is the infinitely
divisible one, and it is the purpose of these notes to study
Stein's method in this context.  Our results will, in particular, 
provide a common framework for all the examples mentioned above 
in addition to presenting new ones.

As far as the content of the paper is concerned, the next section
introduces the basic infinitely divisible terminology and some examples. 
The third one provides a functional characterization of infinitely 
divisible laws from which distance estimates follows: this is the content of Theorem \ref{thm3.1}. Various
comparisons with previously known situations are then made. In particular, we respectively extend the notions of size-bias and zero-bias distributions in Corollary \ref{cor:ExtSizeBias} and in Proposition \ref{local} to infinitely divisible distributions with L\'evy measure satisfying certain moment assumptions.
Section 4 shows how the new characterizations obtained in the previous section 
lead, via Fourier methods, and for infinitely divisible 
sequences, rates of convergence results in either Kolmogorov or smooth Wasserstein 
distance, in particular for compound Poisson approximations.  
In Section \ref{sec:SE},  the solution to the fundamental equation is  
presented and its properties are studied when the target limit law belongs to the class of non-degenerate self-decomposable distributions. 
In Section \ref{sec:AP}, the developed Stein methodology is applied to obtain some quantitative 
approximation results for classical weak limit 
theorems for sums of independent random variables leading to Theorem \ref{thm:GWLT} which is the main result of the section and is complemented by some more explicit corollaries. 
Finally, we conclude this manuscript by addressing further extensions 
of our ideas and results for potential future work.  

\section{Preliminaries}

Let $X\sim ID(b,\sigma^2,\nu)$ be an infinitely divisible random 
variable, i.e., let $\varphi$ the characteristic function of $X$ be given
for all $t\in\bbr$ by
\begin{equation}\label{eq2.1}
\varphi(t)=
e^{itb-\sigma^2 \frac{t^2}2+\int^{+\infty}_{-\infty}
(e^{itu}-1-itu{\bbone}_{|u|\le 1})\nu(du)},
\end{equation}
for some $b\in\bbr$, $\sigma\ge 0$ and a positive Borel measure $\nu$ on 
$\bbr$ such that $\nu (\{0\})=0$ and 
$\int^{+\infty}_{-\infty}(1\wedge u^2)\nu (du)<+\infty$. 
The measure 
$\nu$ is called the L\'evy measure of $X$, and $X$ is said to be without
Gaussian component (or to be purely Poissonian or purely non-Gaussian)
whenever $\sigma^2=0$. (We refer the reader to Sato \cite{S}, for a 
good introduction to infinitely divisible laws and L\'evy processes.) 
The representation \eqref{eq2.1} is the standard one we will mainly be using
throughout these notes with the (unique) generating triplet
$(b,\sigma^2,\nu)$. However, different types of representation for the
characteristic function are also possible. Two of these are presented
next.  First, if $\nu$ is such that $\int_{|u|\le 1} |u|\nu (du)<+\infty$,
then \eqref{eq2.1} becomes
\begin{equation}\label{eq2.2}
\varphi(t)=e^{itb_0-\sigma^2\frac{t^2}2+\int^{+\infty}_{-\infty}
(e^{itu}-1)\nu (du)},
\end{equation}
where $b_0 = b-\int_{|u|\le 1}u\nu (du)$ is called the {\it drift} of $X$.
This representation of the characteristic function is cryptically
expressed as $X\sim ID(b_0,\sigma^2,\nu)_0$. Second, if $\nu$ is
such that $\int_{|u|>1} |u|\nu(du)<+\infty$, then
\eqref{eq2.1} becomes
\begin{equation}\label{eq2.3}
\varphi(t)=e^{itb_1-\sigma^2\frac{t^2}2 + \int^{+\infty}_{-\infty}
(e^{itu}-1-itu)\nu (du)},
\end{equation}
where $b_1=b+\int_{|u|>1} u\nu (du)$ is called the {\it center} of $X$.
This last representation is now cryptically written as 
$X\sim ID(b_1,\sigma^2,\nu)_1$. In fact, $b_1=\bbe X$  as, for any
$p>0$, $\bbe |X|^p<+\infty$ is equivalent to $\int_{|u|>1}|u|^p\nu (du)
<+\infty$.  Also, for any $r>0$, $\bbe e^{r|X|}<+\infty$ is equivalent to
$\int_{|u|>1} e^{r|u|}\nu (du)<+\infty$.

Various choices of generating triplets $(b,\sigma^2,\nu)$ provide various
classes of infinitely divisible laws.  The triplet $(b,0,0)$
corresponds to a degenerate random variable, $(b,\sigma^2,0)$ to a normal
one with mean $b$ and variance $\sigma^2$, the choice $(\lambda,0,
\lambda\delta_1)$, where $\lambda >0$ and where $\delta_1$ is the 
Dirac measure at 1, corresponds to a Poisson random variable
with parameter $\lambda$.  For $\nu$ finite, with the choice 
$b_0=b-\int_{|u|\le 1} u\nu (du)=0$, $\sigma^2=0$ and further setting
$\nu (du)=\nu (\bbr)\nu_0 (du)$, where $\nu_0$ is a Borel probability
measure on $\bbr$,  \eqref{eq2.2} becomes 
\begin{equation}\label{eq2.4}
\varphi(t)=e^{\nu(\bbr)\int^{+\infty}_{-\infty} (e^{itu}-1)
\nu_0 (du)},
\end{equation}
i.e., $X$ is compound Poisson: $X\sim {\rm CP}(\nu(\bbr), \nu_0)$. 
Next, let $X\sim \caln Bin^0
(r,p)$, i.e., let the support of $X$ be the non-negative 
integers and let 
$$\bbp (X=k)=\frac{\ugam (r+k)}{\ugam(r)k!} p^r(1-p)^k,\quad
k=0,1,2,\dots$$
where $r>0$ and $0<p<1$. Then, $X\sim ID(b,0,\nu)$ with 
$\nu (du)=r\sum^\infty_{k=1}k^{-1}{q^k}\delta_k(du)$ and 
$b_0=b-\int_{|u|\le 1} u\nu (du)=0$, i.e., $b=rq$, and so $\bbe X = rq/p$, 
where as usual
$q=1-p$. If instead, $X\in\caln Bin(r,p)$, i.e., if
$$\bbp (X=k)=\frac{\ugam (r+k-1)}{\ugam (r)(k-1)!} p^r(1-p)^{k-1},
\quad k=1,2,\dots$$
then $X\sim ID(b,0,\nu)$ with $b=1+rq$ and 
$\nu (du)=r\sum^\infty_{k=1}
k^{-1}{q^k}\delta_k(du)$ and so $\bbe X = r/p$.  
If $X$ has a Gamma distribution with parameters
$\alpha > 0$ and $\beta >0$, i.e., if $X$ has density 
${\beta^\alpha}{\ugam(\alpha)}^{-1} 
x^{\alpha-1}e^{-\beta x}{\bbone}_{(0,+\infty)}(x)$, then 
$X\sim ID (b,0,\nu)$ with $\nu (du)=\alpha{e^{-\beta u}}u^{-1}
\bbone_{(0,+\infty)}(u)du$ and 
$b_0=0$, i.e., $b=\int^1_0 \alpha e^{-\beta u} du=\alpha(1-e^{-\beta})/\beta$. 
If $X$ is the standard Laplace distribution with
density $e^{-|x|}/2$, $x\in\bbr$, then $X\sim ID(b,0,\nu)$
where ${\nu (du)} = {e^{-|u|}}{|u|^{-1}}du$, $u\ne 0$ and
$b_0=0$, i.e., $b=\int_{|u|\le 1} u{e^{-|u|}}{|u|^{-1}} du=0$. More
generally, if $X$ has a two-sided exponential distribution with 
parameters $\alpha >0$ and $\beta >0$, i.e., if $X$ has density
${\alpha\beta}{(\alpha +\beta)^{-1}} (e^{-\alpha x}\bbone_{[0+\infty)} (x) 
+e^{\beta x}\bbone_{(-\infty,0)}(x))$, $x\in \bbr$, then once more 
$X\sim ID (b,0,\nu)$ with ${\nu (du)}/{du} ={e^{-\alpha u}}u^{-1} 
\bbone_{(0,+\infty)} (u) - {e^{\beta u}}u^{-1} 
\bbone_{(-\infty,0)} (u)$ and $b_0=0$, i.e., 
$b=\int_{|u|\le 1} u\nu (du)=\alpha^{-1} (1-e^{-\alpha})-
\beta^{-1} (1-e^{-\beta})$.  Finally, if $X$ is a stable vector, say, on $\bbr^d$, 
then $\nu$ is
given by $\nu (B)=\int_{S^{d-1}}\sigma(d\xi)\int^\infty_0\bbone_B
(r\xi)\frac{dr}{r^{1+\alpha}}$, $0<\alpha <2$, where $\sigma$ 
(the spherical component of $\nu$) is a finite positive measure on 
the unit sphere $S^{d-1}$ of $\bbr^d$ ($S^0 = \{-1, 1\}$) and $B$ a Borel set of $\mathbb{R}^d$. 
Now, $X$ is symmetric 
if and only if $\sigma$ is symmetric in which case the characteristic function of $X$ becomes
$\varphi (t)=e^{-C_\alpha\int_{S^{d-1}}|\langle t,\xi\rangle|^\alpha
\sigma (d\xi)}$, where $C_\alpha=\frac{\sqrt\pi \ugam ((2-\alpha)/2)}
{\alpha 2^\alpha\ugam ({(1+\alpha)}/2)}$.  Moreover, $X$ is
rotationally invariant if and only if $\sigma$ is uniform on 
$S^{d-1}$ and then $\varphi(t)=e^{-C_{\alpha,d} \|t\|^\alpha}$,
$t\in\bbr^d$, where $C_{\alpha ,d}=C_\alpha\int_{S^{d-1}}|\langle
\frac t{\|t\|},\xi\rangle|^\alpha\sigma (d\xi)$ does not depend on $t$.
In particular, if $\sigma$ is uniform on $S^{d-1}$, 
$C_{\alpha, d}=\ugam (d/2)\ugam ((2-\alpha)/2)/\alpha 2^\alpha
\ugam ((d+\alpha)/2)$.  Therefore, if $X$ is an $\alpha$-stable random variable, 
its L\'evy measure is given by
\begin{align}\label{StableLM}
\nu(du):= \left(c_1\frac{1}{u^{\alpha+1}}\bbone_{(0,+\infty)}(u)
+c_2 \frac{1}{|u|^{1+\alpha}}\bbone_{(-\infty,0)}(u)\right)du.
\end{align}
where $c_1,c_2\geq 0$ are such that $c_1+c_2>0$. The symmetric case 
corresponds to $c_1=c_2$ 
and $b=0$, which we write as $X\sim S\alpha S$.  The class of infinitely divisible distributions is vast and 
also includes, among others, Student's t-distribution, the Pareto distribution, 
the F-distribution, the Gumbel distribution. Besides these classical examples let us mention 
that any log convex density on $(0,+\infty)$ is infinitely divisible and so are many 
classes of log-concave measures (see \cite[Chapter 10]{S}).

{\it  In the rest of this text, the terminology 
 L\'evy measure is used to denote a positive Borel measure on $\bbr$ 
 which is atomless at the origin and 
 which integrates out the function $f(x) = \min(1, x^2)$. Moreover, as in \cite{S}, for a real function $f$, increasing means $f(s)\leq f(t)$ for $s<t$, and decreasing means $f(s)\geq f(t)$ for $s<t$. When the equality is not allowed, we say strictly increasing or strictly decreasing. Finally, we denote by $\ln$ the natural logarithm.}

\section{Characterization and Coupling}\label{sec:CC}

We are now ready to present our first result which characterizes ID laws
via a functional equality.  This functional equality involves Lipschitz
functions and we have to agree on what is meant by ``Lipschitz."
Below, the functions we consider need not be defined on the whole of
$\bbr$ but just on a subset of $\bbr$ containing $R_X$, the range of
$X\sim ID(b,0,\nu)$, and $R_X+S_\nu$, where $S_\nu$ is the support of 
$\nu$.  For example, if $X$ is a Poisson random variable, a Lipschitz
function (with Lipschitz constant 1) is then defined on $\bbn$ and is
such that $|f(n+1)-f(n)|\le 1$, for all $n\in\bbn$. 

Now, as well known, a Lipschitz function $f$ defined on a subset $S$
of $\bbr$ can be extended, without increasing its Lipschitz semi-norm
where, as usual, the Lipschitz semi-norm of $f$ is 
$\|f\|_{Lip}=\sup_{x\ne y}|f(x)-f(y)|/|x-y|$.
(This can be done in various ways, e.g., for any $x\in\bbr$, let
$\tilde f(x)=\inf_{z\in S} (f(z)+|x-z|)$. Then, for any $y\in \bbr$,
$\tilde f(x)\le \inf_{z\in S}(f(z)+|x-y|+|y-z|)=|x-y|+\tilde f(y)$.  Another extension is given via 
$\bar f(x)=\sup_{z\in S} (f(z)-|x-z|)$.)
Now, in the integral representations and as integrands, $f$ and $\tilde f$ are indistinguishable.
Therefore, and since we do not wish to distinguish between, say, discrete
and continuous random variables, in the sequel, Lipschitz will be understood
in the classical sense, i.e., $f\in Lip$ with Lipschitz constant $C>0$, if $|f(x)-f(y)|\le C|x-y|$,
for all $x,y\in\bbr$, and $f$ could then be viewed as the Lipschitz
extension $\tilde f$.  Throughout the text, the space of real-valued 
Lipschitz functions defined on some domain $D$ is denoted by $Lip(D)$,  
while the space of bounded Lipschitz ones is denoted by $BLip(D)$.  
Endowed with the norm $\|\cdot\|_{BLip}= \max(\|\cdot\|_{\infty}, \|\cdot\|_{Lip})$, 
$BLip(D)$ is a Banach space, with $\|f\|_\infty=\sup_{x\in\mathbb{R}}|f(x)|$. Finally, we denote the closed 
unit ball of $Lip(D)$ by $Lip(1)$ and similarly $BLip(1)$ denotes the closed unit ball 
of $BLip(D)$.

\begin{thm}\label{thm3.1}
Let $X$ be a random variable such that $\bbe|X|<+\infty$. Let $b\in\bbr$
and let $\nu$ be a positive Borel measure on $\bbr$ such that 
$\nu (\{0\})=0$, $\int^{+\infty}_{-\infty} (1\wedge u^2)\nu (du)<
+\infty$ and $\int_{|u|>1}|u|\nu(du)<+\infty$. Then,
\begin{equation}\label{eq3.1}
\bbe\left(Xf(X)-bf(X)-\int^{+\infty}_{-\infty}(f(X+u)-f(X)\bbone_{|u|\le 1})
u\nu (du)\right)=0,
\end{equation}
for all bounded Lipschitz function $f$ if and only if $X\sim ID(b,0,\nu)$.
\end{thm}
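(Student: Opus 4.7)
The plan is to reduce Theorem \ref{thm3.1} to a statement about the characteristic function $\varphi(t):=\bbe e^{itX}$ by testing \eqref{eq3.1} against the family $\{e^{it\cdot}\}_{t\in\bbr}$, whose real and imaginary parts lie in $BLip(\bbr)$ with $\|\cdot\|_\infty\le 1$ and $\|\cdot\|_{Lip}\le |t|$. The preliminary step is to check that, under the integrability conditions imposed on $\nu$, every summand in \eqref{eq3.1} is absolutely integrable for any $f\in BLip(\bbr)$: on $\{|u|\le 1\}$ the Lipschitz bound $|f(X+u)-f(X)|\le \|f\|_{Lip}|u|$ pairs with $\int_{|u|\le 1}u^2\nu(du)<+\infty$; on $\{|u|>1\}$ the bound $|f(X+u)|\le \|f\|_\infty$ pairs with $\int_{|u|>1}|u|\nu(du)<+\infty$; and $|Xf(X)|\le \|f\|_\infty|X|$ combined with $\bbe|X|<+\infty$ handles the product term.

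For the necessary condition (``only if''), I would substitute the real and imaginary parts of $e^{itx}$ into \eqref{eq3.1}, invoke Fubini (justified by the estimates above with $\|f\|_\infty=1$, $\|f\|_{Lip}\le |t|$), and use $\bbe e^{it(X+u)}=e^{itu}\varphi(t)$ to obtain the first-order linear ODE
\begin{equation*}
\varphi'(t)=i\varphi(t)\Bigl(b+\int_{-\infty}^{+\infty}u(e^{itu}-\bbone_{|u|\le 1})\nu(du)\Bigr),\qquad \varphi(0)=1,
\end{equation*}
whose coefficient is continuous in $t$ by dominated convergence. Integrating this ODE yields precisely the L\'evy--Khintchine exponent of \eqref{eq2.1} with $\sigma^2=0$, forcing $X\sim ID(b,0,\nu)$.

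For the sufficient condition (``if''), starting from $X\sim ID(b,0,\nu)$ with this $\varphi$, differentiation of \eqref{eq2.1} under the integral sign (dominated by the same moment bounds) recovers the displayed ODE; running Fubini in reverse produces \eqref{eq3.1} for $f(x)=e^{itx}$ and, by linearity, for every trigonometric polynomial. To extend to arbitrary $f\in BLip(\bbr)$, I would approximate $f$ by $f_n:=\chi_n\cdot (f*\phi_{1/n})$, with $\phi_{1/n}$ a Gaussian mollifier and $\chi_n$ a smooth cutoff equal to $1$ on $[-n,n]$, vanishing off $[-n-1,n+1]$, and satisfying $\|\chi'_n\|_\infty\le 2$. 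Each $f_n$ is Schwartz, so by Fourier inversion it is an integral of characters against its Fourier transform, and \eqref{eq3.1} then holds for $f_n$ by linearity and Fubini. One has $f_n\to f$ pointwise with $\|f_n\|_\infty\le \|f\|_\infty$ and $\|f_n\|_{Lip}\le \|f\|_{Lip}+2\|f\|_\infty$ uniformly in $n$; dominated convergence, applied with the dominants identified in the first paragraph, then transfers \eqref{eq3.1} from $f_n$ to $f$.

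The main obstacle is this last extension: bounded Lipschitz functions need not decay at infinity, so no naive $L^1$ Fourier inversion applies to $f$ itself, and the cutoff $\chi_n$ must be inserted without letting the sup or Lipschitz norm of $f_n$ blow up --- otherwise the domination of the non-local term against the potentially infinite measure $\nu$ on $\{|u|\le 1\}$ would be lost. Once this uniform $BLip$ control is secured, the remainder is a routine exercise in dominated convergence and ODE uniqueness.
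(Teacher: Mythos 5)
Your proposal is correct, and its converse direction (plugging in $e^{it\cdot}$, deriving the ODE $\varphi'(t)=i\varphi(t)\bigl(b+\int(e^{itu}-\bbone_{|u|\le 1})u\nu(du)\bigr)$ and integrating it) is exactly the paper's argument for that implication. Where you genuinely diverge is the direct implication: the paper obtains \eqref{eq3.1} as a special case of the covariance representation of \cite[Proposition 2]{HPAS}, taking $g(y)=y$ via truncation in \eqref{eq3.2} and then using $\bbe X=b+\int_{|u|>1}u\nu(du)$, whereas you verify \eqref{eq3.1} first on characters (equivalently, on the ODE satisfied by the L\'evy--Khintchine exponent), transfer it to Schwartz functions by Fourier inversion and Fubini (the needed absolute integrability holds since $\int|\hat g(t)|(1+|t|)\,dt<\infty$ pairs with $\int_{|u|\le1}u^2\nu(du)+\int_{|u|>1}|u|\nu(du)<\infty$), and then reach general $f\in BLip$ by the mollify-and-cutoff approximation with uniform control $\|f_n\|_\infty\le\|f\|_\infty$, $\|f_n\|_{Lip}\le\|f\|_{Lip}+2\|f\|_\infty$ and dominated convergence; all of these steps check out. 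The trade-off: your route is self-contained and elementary, avoiding the interpolation identity of \cite{HPAS} entirely, but it costs an approximation layer; the paper's route is shorter and delivers as a by-product the covariance identity \eqref{eq3.3}/\eqref{eq3.7} for all bounded Lipschitz $f$, which is reused repeatedly later (Corollary \ref{cor:ExtSizeBias}, Propositions \ref{local} and \ref{prop:final}), and it adapts immediately to the case with Gaussian component mentioned in Remark \ref{rem3.2}.
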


\begin{proof} Note at first that, by the assumption on $\nu$ and $f$, 
the left-hand side of \eqref{eq3.1} is well defined and note also that throughout 
the proof, interchanges of integrals and expectations are perfectly
justified.  The direct part of the statement is, in fact, a particular
case of a covariance representation obtained in \cite[Proposition 2]{HPAS}.  Indeed, if
$X\sim ID(b,0,\nu)$ and if $f$ and $g$ are two bounded Lipschitz
functions, then
\begin{equation}\label{eq3.2}
Cov (f(X),g(X))=\int^1_0 \bbe_z\int^{+\infty}_{-\infty}
(f(X_z+u)-f(X_z))(g(Y_z+u)-g(Y_z)) \nu (du)dz,
\end{equation}
where $(X_z,Y_z)$ is a two-dimensional ID vector with characteristic function defined by
$\varphi_z(t,s)=(\varphi(t)\varphi(s))^{1-z}\varphi(t+s)^z$, for all $z\in [0,1]$, 
for all $s,t\in\mathbb{R}$ and where $\varphi$ is the characteristic function of $X$.
%a two-dimensional ID vector with parameter $(b,b)$ and 
%L\'evy measure $z\nu_1(du,dv)+(1-z)\nu_0 (du,dv)$, where 
%$\nu_0 (du,dv)=\nu (du)\delta_0 (dv)+\delta_0 (du)\nu (dv)$ and where
%$\nu_1(du,dv)$ is the measure $\nu$ restricted to the diagonal of $\bbr\times\bbr$.
Since $X_z=_dY_z=_d X$ where $=_d$ stands for equality in distribution, taking $g(y)=y$ (which is possible by first taking $g_R(y)=y\bbone_{|y|\leq R}+R\bbone_{y\geq R}-R\bbone_{y\leq -R}$ for $R>0$ and then passing to the limit), 
\eqref{eq3.2} becomes
\begin{equation}\label{eq3.3}
\bbe Xf(X)-\bbe X\bbe f(X)=\bbe\int^{+\infty}_{-\infty}(f(X+u)-f(X))
u\nu (du).
\end{equation}
To pass from \eqref{eq3.3} to \eqref{eq3.1}, just note that since 
$\bbe |X|<+\infty$, differentiating the characteristic function of $X$,
shows that $\bbe X=b+\int_{|u| > 1}u\nu (du)$.
To prove the converse part of the equivalence, i.e., that \eqref{eq3.1},
when valid for all $f$ bounded Lipschitz, implies that 
$X\sim ID (b,0,\nu)$, it is enough to apply \eqref{eq3.1} to sines and
cosines or equivalently to complex exponential functions and then to identify
the corresponding characteristic function. For any $s\in \bbr$,
let $f(x)=e^{isx}$, $x\in \bbr$. \eqref{eq3.1} becomes
\begin{equation}\label{eq3.4}
\bbe Xe^{isX} -b\bbe e^{isX} = \bbe e^{isX}\int^{+\infty}_{-\infty}
(e^{isu} -\bbone_{|u|\le 1})u\nu (du).
\end{equation}
Let $\varphi (s)=\bbe e^{isX}$, then \eqref{eq3.4} rewrites as
\begin{equation}\label{eq3.5}
\varphi'(s) = i\varphi (s)\left( b+\int^{+\infty}_{-\infty}
(e^{isu} -\bbone_{|u|\le 1})u\nu (du)\right).
\end{equation}
Integrating out the real and imaginary parts of \eqref{eq3.5} leads, for any $t\ge 0$, to:  
\begin{align*}
\varphi (t)&= \exp\bigg(itb+i\int^t_0\int^{+\infty}_{-\infty} 
(e^{isu} -\bbone_{|u|\le 1})u\nu (du)ds\bigg)\\
&= \exp\bigg(itb+ i\int^{+\infty}_{-\infty}\int^t_0 (e^{isu} -\bbone_{|u|\le 1}
)u\,ds\,\nu (du)\bigg)\\
&= \exp\bigg(itb+\int^{+\infty}_{-\infty} (e^{itu} -1-itu\bbone_{|u|\le 1})
\nu (du)\bigg).
\end{align*}
A similar computation for $t\le 0$ finishes the proof.    
\end{proof}

\begin{rem}\label{rem3.2}

(i) Both the statement and the proof of Theorem~\ref{thm3.1} carry over
to $X\sim ID(b,\sigma^2,\nu)$. The corresponding version of
\eqref{eq3.1} which characterizes $X$ is then
\begin{equation}\label{eq3.6}
\bbe (Xf(X)-bf(X)-\sigma^2f'(X)-\int^{+\infty}_{-\infty} (f(X+u)-f(X)
\bbone_{|u|\le 1})u\nu (du))=0.
\end{equation}
In particular, if $\nu=0$, \eqref{eq3.6} is the well known 
characterization of the normal law with mean $b=\bbe X$ and 
variance $\sigma^2$.

(ii) There are other ways to restate Theorem~\ref{thm3.1} for $X$ 
such that $\bbe |X|<+\infty$. For example, 
if $X\sim ID(b,0,\nu)$, then
\begin{equation}\label{eq3.7}
Cov (X, f(X))=\bbe \int^{+\infty}_{-\infty} (f(X+u)-f(X))u\nu (du).
\end{equation}
Conversely, if \eqref{eq3.7} is satisfied for all bounded Lipschitz 
functions $f$, then $X\sim ID(b,0,\nu)$, where $b=\bbe X-
\int^{+\infty}_{-\infty} u\bbone_{|u|>1} \nu (du)$.  In case
$\int_{|u|\le 1}|u|\nu (du)<+\infty$, a further characterizing 
representation is
\begin{equation}\label{eq3.8}
\bbe X f(X)-\left(b-\int_{|u|\le 1}u\nu (du)\right)\bbe f(X)=
\bbe\int^{+\infty}_{-\infty} f(X+u)u\nu (du),
\end{equation}
or equivalently,
\begin{equation}\label{eq3.9}
\bbe X f(X)-b_0
\bbe f(X)= \bbe\int^{+\infty}_{-\infty} f(X+u)u\nu (du),
\end{equation}
i.e.,
\begin{equation}\label{eq3.10}
\bbe X f(X)-\left(\bbe X-\int^{+\infty}_{-\infty}u\nu (du)\right)
\bbe f(X)= \bbe\int^{+\infty}_{-\infty} f(X+u)u\nu (du).
\end{equation}
\end{rem}
\noindent
Let us now specialize \eqref{eq3.1}, \eqref{eq3.8} and \eqref{eq3.9} to various cases, 
some known, some new.

\begin{exmp}
(i) Of course, if $X\sim ID(\lam,0,\lam\delta_1)$, i.e., when 
$X$ is a Poisson random variable with parameter $\lam = \bbe X>0$, then 
\eqref{eq3.1} becomes the familiar
\begin{equation}\label{eq3.11}
\bbe Xf(X)=\bbe X\bbe f(X+1).
\end{equation}
More generally, if $\nu (du)=c\delta_1(du)$, then
$X\sim ID(b=\bbe X,0,c\delta_1)$. 

(ii) If $X\sim\caln Bin^0(r,p)$, then, as indicated before, $b_0= 0$, $\nu(du) = 
r\sum_{k=1}^{+\infty}q^k\delta_k(du)/k$, with $q=1-p$, 
and so \eqref{eq3.9} becomes
\begin{align}\label{eq3.12}
\bbe Xf(X)&=r\bbe \sum^\infty_{k=1}f(X+k)q^k \nonumber\\
&=rq\bbe f(X+1) + r\bbe \sum^\infty_{k=2}f(X+k)q^k \nonumber\\
&=rq\bbe f(X+1) + \sum_{k=2}^\infty\sum_{j=0}^\infty f(j+k)r\frac{\ugam (r+j)}
{\ugam(r)j!} p^rq^jq^k \nonumber\\
&=rq\bbe f(X+1) + \sum_{\ell=2}^\infty f(\ell)p^rq^\ell\frac{r}{\ugam(r)}
\sum_{k=0}^{\ell -2}\frac{\ugam (r+k)}{k!} \nonumber\\
&=rq\bbe f(X+1) + \sum_{\ell=2}^\infty f(\ell)p^rq^\ell\frac{1}{\ugam(r)}
\frac{\ugam (r+\ell -1)}{(\ell-2)!} \nonumber\\
&=rq\bbe f(X+1) + q\bbe Xf(X+1), 
\end{align}
since $\ugam(t+1)= t\ugam(t), t>0$.  Now, \eqref{eq3.12} is exactly 
the negative binomial characterizing identity obtained in \cite{BP}.   

(iii) If $X\sim \caln Bin(r,p)$, \eqref{eq3.1} becomes
\begin{align}\label{eq3.13}
\bbe Xf(X)&=\bbe f(X)+r\bbe\sum^\infty_{k=1} 
f(X+k)q^k\nonumber \\
&=\bbe f(X) + \sum_{k=1}^\infty\sum_{j=1}^\infty f(j+k)r\frac{\ugam (r+j-1)}
{\ugam(r)(j-1)!} p^rq^{j-1}q^k \nonumber\\
&=\bbe f(X) + \sum_{\ell=2}^\infty f(\ell)p^rq^{\ell-1}\frac{r}{\ugam(r)}
\sum_{k=1}^{\ell -1}\frac{\ugam (r+k-1)}{(k-1)!} \nonumber\\
&=\bbe f(X) + \sum_{\ell=2}^\infty f(\ell)p^rq^{\ell-1}\frac{1}{\ugam(r)}
\frac{\ugam (r+\ell -1)}{(\ell-2)!} \nonumber\\
&=\bbe f(X) + q \bbe((r+X-1)f(X+1)),   
\end{align}
which, in view of the previous example, 
is exactly the expected characterizing equation since $X-1\sim \caln Bin^0(r,p)$.

(iv) If $X\sim {\rm CP}(\nu(\bbr), \nu_0)$, then \eqref{eq3.1} or \eqref{eq3.8}--\eqref{eq3.10} becomes 
\begin{equation}\label{cp}
\bbe Xf(X)=\bbe\int^{+\infty}_{-\infty} f(X+u)u\nu (du)=
\nu (\bbr)\bbe\int^{+\infty}_{-\infty}
f(X+u)u\nu _0(du),
\end{equation}
and \eqref{cp} is the characterizing identity for the compound Poisson law given  
in \cite{BCL}.   

(v) If $X$ is the standard Laplace distribution 
with density $e^{-|x|}/2, x\in \bbr$.  Then, $\nu(du)/du=\exp(-|u|)/|u|$, $b=0$, $\int_{-1}^{1}u\nu(du) = 0$, 
and \eqref{eq3.1} or \eqref{eq3.8}-\eqref{eq3.10} becomes
\begin{align}\label{exp}
\bbe Xf(X)&=\bbe\int^{+\infty}_{-\infty} f(X+u){\rm sign}(u){e^{-|u|}}du \nonumber \\
&=2\bbe f(X+\operatorname{sign}(L)L)\nonumber\\
&=\bbe  \int_{0}^{+\infty}(f(X+u)-f(X-u))e^{-u}du\nonumber\\
& =\bbe( f(X+Y) - f(X-Y)),
\end{align}
where $L$ is a standard Laplace random variable independent of $X$, while $Y$ is a standard exponential 
random variable independent of $X$.

(vi) If $X$ is a Gamma random variable with parameters $\alpha > 0$ and $\beta > 0$, then,  see \cite{Luk94},  
for $f$ ``nice'',  
\begin{align}\label{eq:laguerre}
\bbe((\beta X-\alpha)f(X))=\bbe Xf'(X).
\end{align}
But, $X$ is infinitely divisible with $\nu(du)
=\alpha \bbone_{(0,+\infty)}(u)\exp(-\beta u)/udu$, 
and it follows from \eqref{eq3.1} that
\begin{align}\label{GamThm3.1}
\bbe Xf(X)&=\alpha\frac{1-e^{-\beta}}{\beta}\bbe f(X)\nonumber\\
&\quad +\alpha\, \bbe\int_0^\infty (f(X+u)-f(X)\bbone_{|u|\leq 1})e^{-\beta u}du.
\end{align}
Equivalently from \eqref{eq3.8}--\eqref{eq3.10}, since $b_0=0$, and since $\bbe X = \alpha/\beta$, 
\begin{align*}
\bbe X f(X)&=\bbe \int_{-\infty}^{+\infty} f(X+u)u\nu(du)\\
&=\alpha \bbe \int_0^{+\infty}f(X+u)e^{-\beta u}du\\
&=\frac{\alpha}{\beta}\bbe f(X+Y)\\
&=\bbe X\bbe f(X+Y),
\end{align*}
where $Y$ is an exponential random variable, with parameter $\beta$, independent of $X$.
Thus, Theorem~\ref{thm3.1} implies the existence of an additive size bias (see e.g. \cite[Section 2]{CGS}) distribution for the gamma distribution. 
Moreover, it says that the only probability measure which has an additive 
exponential size bias distribution is the gamma one. 

(vii) To complement this very partial list, let us consider an example, where the 
literature is sparse (for the symmetric case, see \cite{AMPS17,X17}), namely the stable case. 
At first, let $X$ be a symmetric $\alpha$-stable random variable with
$\alpha \in (1,2)$, 
i.e., let $X\sim S\alpha S$. Then, $b=0$ and \eqref{eq3.1} becomes 
\begin{align}
\bbe Xf(X)&=\bbe\int^{+\infty}_{-\infty} \big(f(X+u)-f(X)\bbone_{\{|u|\leq 1\}}\big)u\nu(du)\nonumber \\
&= c\bigg(-\bbe\int_{-\infty}^0\big(f(X+u)-f(X)\bbone_{\{|u|\leq 1\}}\big)\frac{du}{(-u)^{\alpha}}\nonumber\\
&\qquad \qquad \qquad +\bbe\int_{0}^{+\infty}\big(f(X+u)-f(X)\bbone_{\{|u|\leq 1\}}\big)\frac{du}{u^{\alpha}}\bigg)\nonumber\\
&= c\, \bbe \int_0^{+\infty}\big( f(X+u)-f(X-u)\big)\frac{du}{u^{\alpha}}\nonumber,
\end{align}
and, therefore, the previous integral is a fractional operator acting on the test function $f$. 
Let us develop this point a bit more by adopting the notation of  \cite[Section 5.4]{SKM93}.  The Marchaud fractional 
derivatives, of order $\beta$, of (a sufficiently nice function) $f$ are defined by
\begin{align*}
&\mathbf{D}^\beta_+(f)(x):=\dfrac{\beta}{\Gamma(1-\beta)}\int_{0}^{+\infty}\dfrac{f(x)-f(x-u)}{u^{1+\beta}}du,\\
&\mathbf{D}^\beta_{-}(f)(x):=\dfrac{\beta}{\Gamma(1-\beta)}\int_{0}^{+\infty}\dfrac{f(x)-f(x+u)}{u^{1+\beta}}du.
\end{align*}
Note that the above operators are well defined for bounded Lipschitz functions as soon as $\beta\in (0,1)$. 
Then, in a more compact form
\begin{align}\label{eq:StableFrac}
\bbe Xf(X)&= C_\alpha\bbe (\mathbf{D}^{\alpha-1}_+(f)(X)-\mathbf{D}^{\alpha-1}_{-}(f)(X)),
\end{align}
where $C_\alpha= c\Gamma(2-\alpha)/(\alpha-1)$.  
Now, for $X\sim S\alpha S$,  \cite[Proposition 3.2]{ARW00} or \cite[Theorem 4.1]{X17} put forward the following characterizing equation
\begin{align}\label{eq:fraclapchar}
\bbe Xf'(X)=\alpha\bbe \Delta^{\frac{\alpha}{2}}f(X),
\end{align}
where $\Delta^{{\alpha}/{2}}$ is the fractional Laplacian defined via
\begin{align*}
\Delta^{\frac{\alpha}{2}}f(x):= d_\alpha\int_{\mathbb{R}}\dfrac{f(x+u)-f(x)}{|u|^{1+\alpha}}du, 
\end{align*}
and where $d_\alpha = \Gamma(1+\alpha)\sin (\pi\alpha)/(2\pi\cos(\alpha\pi/2))$.  

Taking $f'$ (nice enough) as a test function in the right-hand side of \eqref{eq:StableFrac}, leads to
\begin{align*}
 \mathbf{D}^{\alpha-1}_+(f')(x)-\mathbf{D}^{\alpha-1}_{-}(f')(x)&
 :=\dfrac{\alpha-1}{\Gamma(2-\alpha)}\int_0^{+\infty}\frac{f'(x+u)-f'(x-u)}{u^{\alpha}}du.  
\end{align*}
Moreover, 
\begin{align}\label{rep:FracLap}
\Delta^{\frac{\alpha}{2}}f(x)&:= d_\alpha\int_{\mathbb{R}}\dfrac{f(x+u)-f(x)}{|u|^{1+\alpha}}du \nonumber \\
&=d_\alpha\bigg(\int_0^{+\infty} \int_0^u f'(x+t)dt\frac{du}{u^{1+\alpha}}
-\int_{-\infty}^0 \int_{u}^0 f'(x+t)dt \frac{du}{(-u)^{\alpha+1}}\bigg) \nonumber \\
&=\frac{d_\alpha}{\alpha}\bigg(\int_0^{+\infty}f'(x+u)\frac{du}{u^{\alpha}}-\int_{0}^{+\infty}f'(x-v)\frac{dv}{v^\alpha}\bigg)\nonumber \\
&=\frac{d_\alpha}{\alpha}\int_0^{+\infty}(f'(x+u)-f'(x-u))\frac{du}{u^{\alpha}}, 
\end{align}
showing the equivalence of the two characterizing identities \eqref{eq:StableFrac} and \eqref{eq:fraclapchar} for $X\sim S\alpha S$.  
For the general stable case with L\'evy measure given, with $c_1\ne c_2$, by \eqref{StableLM}, then in a straightforward manner, 
\begin{align}\label{characstable}
\bbe Xf(X):=c_{2,\alpha}\bbe (\mathbf{D}^{\alpha-1}_+(f)(X))-c_{1,\alpha}\bbe (\mathbf{D}^{\alpha-1}_{-}(f)(X))
+\dfrac{(c_1-c_2)}{\alpha-1}\bbe f(X),
\end{align}
where 
\begin{align}\label{stableconstants}
c_{1,\alpha}=c_1\dfrac{\Gamma(2-\alpha)}{\alpha-1}, \qquad c_{2,\alpha}=c_2\dfrac{\Gamma(2-\alpha)}{\alpha-1}.
\end{align}

(viii)  Another class of infinitely divisible distributions which is of interest in 
a Malliavin calculus framework is the class of second order Wiener chaoses.  As well known, if $X$ belongs to this class and 
if $=_d$ denotes equality in distribution, then
\begin{align*}
X=_d\sum_{k=1}^{+\infty}\lambda_k (Z_k^2-1), 
\end{align*}
where $(Z_k)_{k\geq 1}$ is a sequence of iid standard normal random variables and where the sequence of reals 
$(\lambda_k)_{k\geq 1}$ is square summable.  Equivalently, the characteristic function of $X$  is given by 
\begin{align*}
\varphi(t)=\exp\left(\int_{-\infty}^{+\infty}(e^{itu}-1-itu)\nu(du)\right), 
\end{align*}
where 
\begin{align}\label{chaoslevy}
\frac{\nu(du)}{du}= \left(\sum_{\lambda\in \Lambda_+}\dfrac{e^{-u/(2\lambda)}}{2u}\right)\bbone_{(0,+\infty)}(u)
+\left(\sum_{\lambda\in \Lambda_-}\dfrac{e^{-u/(2\lambda)}}{2(-u)}\right)\bbone_{(-\infty,0)}(u), 
\end{align}
with $\Lambda_+=\{\lambda_k: \lambda_k> 0\}$ and $\Lambda_-=\{\lambda_k: \lambda_k< 0\}$. 
Thus, $X\sim ID(b,0,\nu)$ with $b=-\int_{|u|>1}u\nu(du)$ and $\nu$ as in \eqref{chaoslevy}. 
The corresponding characterizing equation is therefore:  
\begin{align}
\bbe X f(X)=\bbe \int_{-\infty}^{+\infty} (f(X+u)-f(X))u\nu(du), 
\end{align}
since also $\bbe X=0$.
\end{exmp}
\noindent
As a first corollary to Theorem~\ref{thm3.1}, the following characterizing identities result 
extends the notion of additive size bias distribution to infinitely divisible probability measure with 
finite non-zero mean and such that $\int_{-1}^{+1}|u|\nu(du)<+\infty$.

\begin{cor}\label{cor:ExtSizeBias}
Let $X$ be a nondegenerate random variable such that $\bbe|X|<+\infty$. Let $\nu$ be a L\'evy measure such that 
\begin{align}\label{HP:ExtSizeBias}
\int_{-\infty}^{+\infty}|u|\nu(du)<+\infty,    
\end{align}
and let $b_0 = b - \int_{-1}^{1}u\nu(du)$, $b\in\mathbb{R}$. Assume further that
\begin{align}
m_0^\pm = \max(\pm b_0, 0) + \int_\bbr \tilde{\nu}_\pm(du)\ne 0
\end{align}  
with $\tilde{\nu}(du) = u\nu(du)$. Then, 
\begin{align}\label{eq:ExtSizeBias}
\bbe X f(X)=m_0^+\bbe f(X+Y^{+})-m_0^-\bbe f(X+Y^{-}),
\end{align}
for all bounded Lipschitz functions $f$, where the random variables $Y^{+},Y^{-}$ and $X$ are 
independent with $Y^{+}$ and $Y^{-}$ having respective law
\begin{align}
\mu_{Y^\pm}(du) = \frac{b_0^\pm}{m_0^\pm}\delta_0(du) + \frac{\tilde{\nu}_\pm(du)}{m_0^\pm},
%&\mu_{Y^{-}}(du)=\frac{b_0^-}{m_0^-}\delta_0(du)+ \frac{\tilde{\nu}_-(du)}{m_0^-},
\end{align}
if and only if 
$X\sim ID(b,0,\nu)$.
\end{cor}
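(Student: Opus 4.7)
The plan is to deduce the corollary directly from Theorem~\ref{thm3.1} via a purely algebraic rearrangement based on the Hahn--Jordan decomposition of the signed measure $\tilde\nu(du):=u\,\nu(du)$. Since hypothesis \eqref{HP:ExtSizeBias} forces $\int_{|u|\le 1}|u|\nu(du)<+\infty$, the identity \eqref{eq3.9} of Remark~\ref{rem3.2}(ii) is available and is equivalent to \eqref{eq3.1}; by Theorem~\ref{thm3.1}, its validity for all bounded Lipschitz $f$ is equivalent to $X\sim ID(b,0,\nu)$. It therefore suffices to establish the equivalence \eqref{eq:ExtSizeBias} $\Leftrightarrow$ \eqref{eq3.9} for all bounded Lipschitz $f$.

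To that end, I would first split $b_0 = b_0^+ - b_0^-$ with $b_0^\pm:=\max(\pm b_0,0)$, and write $\tilde\nu = \tilde\nu_+ - \tilde\nu_-$ where $\tilde\nu_+(du)= u\,\bbone_{(0,+\infty)}(u)\,\nu(du)$ and $\tilde\nu_-(du) = -u\,\bbone_{(-\infty,0)}(u)\,\nu(du)$; thanks to \eqref{HP:ExtSizeBias}, both $\tilde\nu_\pm$ are finite positive (mutually singular) measures. Under the assumption $m_0^\pm\ne 0$, the measures $\mu_{Y^\pm}$ are honest probability measures since $\mu_{Y^\pm}(\bbr) = (b_0^\pm + \int_\bbr \tilde\nu_\pm(du))/m_0^\pm = 1$. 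The core computation is then a single application of Fubini: for $Y^+,Y^-$ independent of $X$ with respective laws $\mu_{Y^+},\mu_{Y^-}$, conditioning on $Y^\pm$ yields
\begin{align*}
m_0^\pm\,\bbe f(X+Y^\pm) = b_0^\pm\,\bbe f(X) + \bbe\int_\bbr f(X+u)\,\tilde\nu_\pm(du),
\end{align*}
and subtracting these two identities, using $b_0 = b_0^+ - b_0^-$ and $\tilde\nu_+ - \tilde\nu_- = \tilde\nu$, gives
\begin{align*}
m_0^+\,\bbe f(X+Y^+) - m_0^-\,\bbe f(X+Y^-) = b_0\,\bbe f(X) + \bbe\int_\bbr f(X+u)\,u\,\nu(du),
\end{align*}
which is exactly $\bbe Xf(X)$ in view of \eqref{eq3.9}.

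The forward direction $X\sim ID(b,0,\nu)\Rightarrow$ \eqref{eq:ExtSizeBias} then follows by applying \eqref{eq3.9} to the right-hand side of the last display; the converse follows by running the display backwards to recover \eqref{eq3.9} for all bounded Lipschitz $f$, and then appealing to Remark~\ref{rem3.2}(ii) together with Theorem~\ref{thm3.1}. There is no serious obstacle here beyond careful sign-bookkeeping: one must consistently pair the atom $b_0^\pm\,\delta_0$ with the continuous part $\tilde\nu_\pm$ whose sign matches, so that their combined mass is precisely $m_0^\pm$ and the resulting probability measures $\mu_{Y^\pm}$ are correctly identified. Once the Jordan decomposition is set up cleanly, the argument is essentially a one-line rearrangement of \eqref{eq3.9}.
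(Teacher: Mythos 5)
Your proposal is correct and follows essentially the same route as the paper: starting from the characterizing identity \eqref{eq3.9} (valid under \eqref{HP:ExtSizeBias}), performing the Jordan decomposition $\tilde\nu=\tilde\nu_+-\tilde\nu_-$ together with the split $b_0=b_0^+-b_0^-$, recognizing $\mu_{Y^\pm}$ as probability measures with total mass $m_0^\pm$, and obtaining the converse from Theorem~\ref{thm3.1} (equivalently, via the complex-exponential test functions). The only difference is cosmetic bookkeeping; no gap.
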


\begin{proof}
Let $f$ be a bounded Lipschitz function. By \eqref{eq3.9},
\begin{align}\label{another}
\bbe Xf(X)=b_0\bbe f(X)+\bbe\int_{-\infty}^{+\infty}f(X+u)u\nu(du).  
\end{align}
Now since since $\bbe |X|<+\infty$ 
and thanks to \eqref{HP:ExtSizeBias}, $\tilde{\nu}(du)=u\nu(du)$ is a finite signed measure and 
so its Jordan decomposition is given by 
$$
\tilde\nu(du) = \tilde\nu_+(du) - \tilde\nu_{-}(du) = u\bbone_{(0, +\infty)}(u)\nu(du) - (-u)\bbone_{(-\infty, 0)}(u)\nu(du).  
$$ 
Therefore, \eqref{another} becomes 
\begin{align*} 
\bbe Xf(X)=b_0^+\bbe f(X)-b_0^-\bbe f(X)+\bbe\!\int_0^{+\infty}\!\!\!\!\!\!f(X+u){\tilde{\nu}}_{+}(du) 
-\bbe\!\int_{-\infty}^0\!\!\!\!\!f(X+u)\tilde{\nu}_{-}(du).
\end{align*}
Now,  
\begin{align*}
m_0^+&=b_0^+ +\int_\mathbb{R}\tilde{\nu}_+(du)= b_0^+ + \int_0^{+\infty} u\nu(du), \\
m_0^-&=b_0^-+\int_\mathbb{R}\tilde{\nu}_-(du) = b_0^-+\int_{-\infty}^0(-u)\nu(du), 
\end{align*}
($b_0 = b_0^+ - b_0^-$  and $m_0^+ - m_0^-= \bbe X$) and, therefore, introducing the random variables $Y^+$ 
and $Y^-$ proves the direct implication. 
The converse implication follows directly from 
Theorem~\ref{thm3.1} or by first taking $f(x)=e^{i t x}, t \in \bbr$, in \eqref{eq:ExtSizeBias} and then, 
as previously done in the proof of the theorem, by solving a differential equation.
\end{proof}

\begin{rem}\label{rem:ExtSizeBias}
%\begin{itemize}
(i) If $m_0^{-}=0$ and $m_0^{+}\ne 0$, Proposition \ref{cor:ExtSizeBias} still holds with identity \eqref{eq:ExtSizeBias} being replaced by
\begin{align*}
\bbe X f(X)=m_0^+\bbe f(X+Y^{+})
\end{align*}
for all bounded Lipschitz functions $f$. A similar proposition holds true when $m_0^{-}\ne 0$ and $m_0^{+}=0$.
(ii) When $X\sim ID(b, 0, \nu)$ is non-negative, then necessarily the support of its L\'evy measure is 
in $(0,+\infty)$, the condition $\int_0^1ud\nu(u)<+\infty$ is automatically 
satisfied and $b_0 \ge 0$ (see \cite[Theorem 24.11]{S}).  In this context, for $\bbe X < +\infty$, 
$b_0=\bbe X-\int_0^{+\infty}u\nu(du)$, $m_0= m_0^+ =\bbe X$ 
and so, when $\bbe X > 0$, the characterizing equation \eqref{eq:ExtSizeBias} becomes, 
\begin{align}
\bbe Xf(X)=\bbe X \bbe f(X+Y),
\end{align}
where $Y$ is a random variable independent of $X$ whose law is given by 
\begin{align}
\mu_Y(du)=\frac{b_0}{\bbe X}\delta_0(du)+ \frac{u{\bbone}_{(0,+\infty)}(u)}{\bbe X}\nu(du).
\end{align}
This agrees with the standard notion of size bias distribution for finite mean non-negative 
random variable (see e.g. \cite{arratia2013size}) and recovers and extends a result there. \\
{\it The pair $(Y^+, Y^-)$ in \eqref{eq:ExtSizeBias} will be called the additive size-bias pair associated with $X$.}\\
(iii) For $X\geq 0$ with finite first moment, there is a natural relationship between size bias 
distribution and equilibrium distribution with respect to $X$ (see \cite{PR11}). 
Corollary~\ref{cor:ExtSizeBias} also leads naturally to an extension of this relationship. 
Namely, for $X$ as in the corollary and $f$ bounded Lipschitz, 
\begin{align*}
\bbe f(X)-f(0)
&=\bbe Xf'(UX),\\
&=m_0^+\bbe f' (U(X+Y^+)) -m_0^-\bbe f'(U(X+Y^-)),
\end{align*}
where $U$ is a uniform random variable on $[0,1]$ independent of $X$, $Y^+$ and $Y^-$.\\
%\end{itemize}
(iv) Let us consider a non-trivial example for which the assumption $\eqref{HP:ExtSizeBias}$ is not satisfied. 
Let $X$ be a second order Wiener chaos random variable such that, for all $k\geq 1$, $\lambda_k> 0$, with  
further $\sum_{k\geq 1} \lambda_k=+\infty$. Then, for the L\'evy measure $\nu_N$ given via 
\begin{align*}
\nu_N(du)=\bbone_{(0,+\infty)}(u)\sum_{k=1}^N\dfrac{\exp\left(-\frac{u}{2\lambda_k}\right)}{2u}du,
\end{align*}
we have
\begin{align*}
\int_{-1}^{+1}|u|\nu_N(du)=\sum_{k=1}^N\lambda_k\left(1-\exp\left(-\frac{1}{2\lambda_k}\right)\right) 
\geq \left(1-\exp\left(-\frac{1}{2\lambda_{\max}}\right)\right) \sum_{k=1}^N \lambda_k \underset{N\rightarrow +\infty}{\longrightarrow}+\infty
\end{align*}
where $\lambda_{\max}=\max_{k\geq 1} \lambda_k$. So, by monotone convergence, $\int_{-1}^{+1}|u|\nu(du)=+\infty$.  
In particular, the Rosenblatt distribution belongs to this class of second Wiener chaos random variables, since asymptotically $\lambda_k\underset{k\rightarrow+\infty}{\sim}C_D k^{D-1}$, 
for some $C_D>0$ and $D\in (0,1/2)$ (see Theorem 3.2 of \cite{VT13}).
%\end{itemize}
\end{rem}
\noindent

As seen in some of the examples presented  above, characterizing identities can 
involve local operators, e.g., derivatives, 
while our generic characterization is 
non-local,  involving difference operators.   
Let us explain, next, how to pass from one to the other also encouraging the reader    
to contemplate how this passage is linked to the notion of zero bias 
distribution (see \cite{GR97}) with an additive structure.

\begin{rem}\label{localnonlocal} 
Let us present a general methodology for $X\sim ID (b,0,\nu)$ such that $X\geq 0$ and $0< \bbe X<+\infty$ 
to pass from the 
non-local characterization of Theorem~\ref{thm3.1} to a local characterization. 
Again, since $X\geq 0$, then necessarily the support of $\nu$ is 
in $(0,+\infty)$, $\int_0^1u\nu(du)<+\infty$ and  $b_0\geq 0$ (see \cite{S}).   Hence, from the finite mean assumption 
and for all $v > 0$, 
$\eta(v) = \int_{v}^{+\infty}u\nu(du) < +\infty$.  Therefore, denoting by $\mu$ the law of $X$, 
for any bounded Lipschitz function $f$, 
\begin{align}\label{repder}
Cov(X,f(X))&= \bbe \int_0^{+\infty}(f(X+u)-f(X))u\nu(du)\nonumber \\
&=\int_0^{+\infty}\int_0^{+\infty}\left(\int_0^u f'(x+v)dv\right)u\nu(du)\mu(dx)\nonumber \\
&=\int_0^{+\infty}\int_0^{+\infty}f'(x+v)\eta(v)dv\mu(dx)\nonumber \\
&=\int_0^{+\infty} f'(y)(\eta\ast \mu)(dy), 
\end{align}
where $\eta\ast \mu$ is the convolution of the law $\mu$ with the positive 
Borel measure $\eta(dv) = \eta(v)dv$.  In case 
$\eta\ast \mu$ is absolutely continuous with respect to the Lebesgue measure, denoting its Radon-Nikod\'ym 
derivative by $h$, then $h(y)=\int_0^y\eta(y-v)\mu(dv)$, and \eqref{repder} becomes 
\begin{align}\label{repder2}
Cov(X,f(X)) =\int_0^{+\infty} f'(y)h(y)dy.  
\end{align}
In particular, when $X$ has an exponential distribution, then $h(y)=ye^{-y}$ and \eqref{repder2} becomes the classical relation
\begin{align}\label{caracexpo}
Cov(X,f(X))= \bbe Xf'(X).  
\end{align}
In general, $\eta\ast \mu$ is not a probability law, it is a positive measure, not necessarily finite, since 
$\int_0^{+\infty}\eta(v)dv = \int_0^{+\infty} u^2\nu(du)$.  
In case $X$ is nondegenerate with $\bbe X^2 <+\infty$, i.e., 
$\int_1^{+\infty} u^2\nu(du) < +\infty$,   
\eqref{repder} can be rewritten as 
\begin{align}\label{repder3}
Cov(X,f(X)) = \eta((0, +\infty)) \bbe f'(X+Y),   
\end{align}
where $\eta((0, +\infty)) = \int_0^{+\infty} u^2\nu(du) < +\infty$, and $Y$, with 
law $\eta/\eta((0, +\infty))$, is independent of $X$. In view of our previous corollary, it is a simple matter to modify the above arguments when  
the condition $X\ge 0$ is omitted.  The corresponding result is given by the following proposition, whose proof 
is briefly sketched and 
whose statement is, again, also related to the notion of zero-bias distribution (see \cite{GR97}).  
\end{rem}

%\textcolor{red}{
\begin{prop}\label{local} 
Let $X$ be a nondegenerate random variable such that $\bbe X^2<+\infty$. Let $b\in\bbr$, 
and let $\nu\ne 0$ be a L\'evy measure such that 
\begin{align}\label{HP:ExtZeroBias}
\int_{|u|>1} u^2\nu(du)<+\infty.
\end{align}
Then, 
\begin{align}\label{eq:ExtZeroBias}
Cov(X, f(X))=\left(\int_{-\infty}^{+\infty}u^2\nu(du)\right)\bbe f^\prime (X+Y),
\end{align}
for all bounded Lipschitz functions $f$, 
where the function $\eta$ is defined by
\begin{align*}
\eta(v):=\eta_+(v)\bbone_{(0,+\infty)}(v)+\eta_-(v)\bbone_{(-\infty,0)}(v), 
\end{align*}
with $\eta_+$ and $\eta_-$ respectively defined on $(0,+\infty)$ and on $(-\infty,0)$ via 
\begin{align*}
\eta_+(v)=\int_{v}^{+\infty}u\nu(du), \quad\quad \eta_-(v)=\int_{-\infty}^v (-u)\nu(du), 
\end{align*}
%and with moreover 
%\begin{align}
%\eta(\mathbb{R}):=\int_{\mathbb{R}}u^2\nu(du),
%\end{align}
and where the random variables $X$ and $Y$ are independent, with the law of $Y$ given by 
\begin{align*}
\mu_{Y}(du)=\dfrac{\eta(u)}{\int_{-\infty}^{+\infty}u^2\nu(du)}du,
%\mu_{Y^{-}}(du)=\dfrac{\eta_-(u)\bbone_{(-\infty,0)}(u)}{\eta_-((-\infty, 0))}du,
\end{align*}
if and only if 
$X\sim ID(b,0,\nu)$.
\end{prop}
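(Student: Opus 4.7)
My strategy is to deduce \eqref{eq:ExtZeroBias} from the non-local covariance identity \eqref{eq3.7} (a consequence of Theorem \ref{thm3.1}), by expressing increments of $f$ as integrals of $f'$ and swapping the order of integration; the converse will then be obtained from a Fourier argument that recovers the L\'evy--Khintchine exponent.

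For the direct implication, assume $X\sim ID(b,0,\nu)$. Since $\bbe X^2<+\infty$, the hypothesis \eqref{HP:ExtZeroBias} together with the L\'evy integrability at the origin yields $\int u^2\nu(du)<+\infty$. Starting from \eqref{eq3.7}, split the $u$-integral at zero. For $u>0$, write $f(X+u)-f(X)=\int_0^u f'(X+v)dv$; for $u<0$, write $f(X+u)-f(X)=-\int_u^0 f'(X+v)dv$. Since $|f'|\le\|f\|_{Lip}$ almost everywhere and $\int u^2\nu(du)<+\infty$, the resulting double integrals converge absolutely, so Fubini's theorem gives
\begin{align*}
Cov(X,f(X))
&=\bbe\int_0^{+\infty}f'(X+v)\eta_+(v)dv+\bbe\int_{-\infty}^0 f'(X+v)\eta_-(v)dv\\
&=\bbe\int_{-\infty}^{+\infty}f'(X+v)\eta(v)dv.
\end{align*}
A further Fubini computation produces $\int \eta(v)dv=\int u^2\nu(du)$, which is strictly positive since $\nu\neq 0$. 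Normalizing $\eta$ by this total mass defines $\mu_Y$, and \eqref{eq:ExtZeroBias} follows.

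For the converse, suppose \eqref{eq:ExtZeroBias} holds for every $f\in BLip(\bbr)$. Applying it to the real and imaginary parts of $f(x)=e^{itx}$ and setting $\varphi(t)=\bbe e^{itX}$, then using the dual Fubini identity $\int_{-\infty}^{+\infty}e^{itv}\eta(v)dv=(it)^{-1}\int_{-\infty}^{+\infty}(e^{itu}-1)u\nu(du)$, one arrives at the linear ODE
\begin{align*}
\frac{\varphi'(t)}{\varphi(t)}=i\bbe X+i\int_{-\infty}^{+\infty}(e^{itu}-1)u\nu(du).
\end{align*}
Integrating from $0$ to $t$ with $\varphi(0)=1$ and using the elementary antiderivative $\int_0^t(e^{isu}-1)u\,ds=-i(e^{itu}-1-itu)$ yields
\begin{align*}
\log\varphi(t)=it\bbe X+\int_{-\infty}^{+\infty}(e^{itu}-1-itu)\nu(du),
\end{align*}
which is the centered representation \eqref{eq2.3} with $b_1=\bbe X$, equivalent to $X\sim ID(b,0,\nu)$ for the stated $b$ since the preliminaries give $b=\bbe X-\int_{|u|>1}u\nu(du)$.

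The main technical obstacle is the careful bookkeeping of absolute convergence for each Fubini swap, and matching the elementary antiderivative computation to the centered L\'evy--Khintchine exponent; everything rests on the fact that the moment hypothesis \eqref{HP:ExtZeroBias} combined with the L\'evy condition near the origin gives $\int u^2 \nu(du)<+\infty$, which simultaneously ensures that the $Y$-law is well defined and that every integral in the Fourier derivation is absolutely convergent.
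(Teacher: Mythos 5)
Your argument is correct and follows essentially the same route as the paper: the direct part is the same splitting of \eqref{eq3.7} at $u=0$, rewriting increments as integrals of $f'$ and applying Fubini to produce $\eta$, and your Fourier/ODE argument for the converse is precisely the alternative the paper itself invokes (taking $f(x)=e^{itx}$ and solving the resulting differential equation, as in the converse of Theorem \ref{thm3.1}). No substantive differences to report.
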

%}

%\textcolor{red}{
\begin{proof}
Let us first sketch the proof of the direct implication. If $\mu$ denotes the law of $X$, then from Theorem \ref{thm3.1}, 
and our hypotheses, it follows that for any bounded Lipschitz function, 
\begin{align}
Cov(X,f(X))=&\bbe \int_{-\infty}^{+\infty}(f(X+u)-f(X))u\nu(du)\nonumber \\
%=&\int \int_\mathbb{R}(f(x+u)-f(x))u\nu(du)d\mu_X(dx)\\
=&\int_{-\infty}^{+\infty} \int_0^{+\infty}(f(x+u)-f(x))u\nu(du)\mu(dx)\nonumber\\
&\qquad +\int_{-\infty}^{+\infty}  \int_{-\infty}^{0}(f(x+u)-f(x))u\nu(du)\mu(dx)\nonumber\\
=&\int_{-\infty}^{+\infty}  \int_0^{+\infty}\left(\int_0^u f'(x+v)dv\right)u\nu(du)d\mu(dx)\nonumber\\
&\qquad +\int_{-\infty}^{+\infty}  \int_{-\infty}^{0}\left(\int_u^0 f'(x+v)dv\right)(-u)\nu(du)\mu(dx)\nonumber\\
=&\int_{-\infty}^{+\infty}  \int_0^{+\infty}f'(x+v)\eta_+(v)dv\mu(dx)\nonumber\\
&\qquad +\int_{-\infty}^{+\infty}  \int_{-\infty}^{0}f'(x+v)\eta_-(v)dvd\mu(dx)\nonumber\\
=&\int_{-\infty}^{+\infty}  \int_{-\infty}^{+\infty}  f'(x+v)\bigg(\eta_+(v)\bbone_{(0,+\infty)}(v)\nonumber\\
&\qquad \qquad \qquad \qquad \qquad \qquad +\eta_-(v)\bbone_{(-\infty,0)}(v)\bigg)dv\mu(dx)\nonumber\\
=&\int_{-\infty}^{+\infty} \int_{-\infty}^{+\infty}  f'(x+v) \eta(v)dv\mu(dx).  
\end{align}
The conclusion then easily follows from the very definition of $Y$ and the assumption \eqref{HP:ExtZeroBias}.  
The converse implication is a direct consequence of the converse part of Theorem \ref{thm3.1} or follows, as before, 
by taking $f(x)=e^{itx}, t\in \bbr$ in \eqref{eq:ExtZeroBias}.  
\end{proof}
%}

%\textcolor{red}{
\begin{rem}\label{rem:Zerobias}
%\begin{itemize}
%\item 
(i) The previous proposition can, in particular, be applied to the two sided exponential distribution with parameters $\alpha>0$ 
and $\beta>0$. In this case, the L\'evy measure is given by 
$\nu(du)/du=e^{-\alpha u}/u\bbone_{(0,+\infty)}(u)-e^{\beta u}/u\bbone_{(-\infty,0)}(u)$. 
Then, the condition \eqref{HP:ExtZeroBias} is readily satisfied,  and   
the law of $Y$ has the following density
\begin{align}
f_{Y}(t)=\frac{\alpha^2\beta^2}{\alpha^2+\beta^2}\bigg(\frac{e^{-\alpha t}}{\alpha}\bbone_{(0,+\infty)}(t)
+\frac{e^{\beta t}}{\beta}\bbone_{(-\infty,0)}(t)\bigg).
\end{align}
%\item If $X$ is a symmetric $\alpha$-stable distribution then, we have, with $\alpha\in (1,2)$:
%\begin{align}
%\int_{\{u>|t|\}}u\nu(du)=c\int_{u>|t|}\frac{du}{u^{\alpha}}=\frac{c}{\alpha-1}\frac{1}{|t|^{\alpha-1}}.
%\end{align}
%Thus, none of the conditions of Corollary \ref{cor:ZeroBias} are satisfied.
(ii) As done in Corollary \ref{cor:ExtSizeBias}, Proposition \ref{local} extends the notion of zero-bias distribution to all 
infinitely divisible nondegenerate distributions with finite variance. {\it The random variable $Y$ in \eqref{eq:ExtZeroBias} 
will be called the extended zero-bias distribution associated with $X$.}\\
(iii) Another possible writing for \eqref{eq:ExtZeroBias}, more in line with \eqref{eq:ExtSizeBias}, is
\begin{align}
Cov(X, f(X))=\eta_+\left((0,+\infty)\right)\bbe f^\prime (X+Y^+)
+\eta_-\left((-\infty,0)\right)\bbe f^\prime (X+Y^-),
\end{align}
where $Y^+$ and $Y^-$ have respective law
\begin{align*}
\mu_{Y^{+}}(du)&=\dfrac{\eta_{+}(u)\bbone_{(0,+\infty)}(u)}{\eta_+((0,+\infty))}du,\\
\mu_{Y^{-}}(du)&=\dfrac{\eta_{-}(u)\bbone_{(-\infty,0)}(u)}{\eta_-((-\infty, 0))}du,
\end{align*}
and where
\begin{align*}
\eta_+\left((0,+\infty)\right)=\int_0^{+\infty}u^2\nu(du),\quad \eta_-\left((-\infty,0)\right):=\int_{-\infty}^{0}u^2\nu(du).
\end{align*}
%\end{itemize}
\end{rem}
\noindent
It is important to note that the stable distributions with $\alpha\in (1,2)$ do not satisfy either the assumptions 
of Corollary \ref{cor:ExtSizeBias} or those of Proposition \ref{local}.  
Nevertheless, our next result which is a mixture of the two previous ones 
characterizes infinitely divisible distributions with finite first moment, and in particular, the stable ones. For this purpose, we introduce the following functions respectively well-defined on $(0,1)$ and on $(-1,0)$,
\begin{align*}
\eta_+(v)=\int_{v}^{1}u\nu(du), \quad\quad \eta_-(v)=\int_{-1}^v (-u)\nu(du).
\end{align*}
Note that since $\nu$ is a L\'evy measure, for all $v\in (0,1)$, 
\begin{align*}
v\eta_+(v)\leq \int_{v}^{1}u^2\nu(du)\leq \int_{0}^{1}u^2\nu(du)<+\infty,
\end{align*}
and similarly for $ \eta_-$.

\begin{prop}\label{prop:final}
Let $X$ be a nondegenerate random variable such that $\bbe |X|<+\infty$. Let $b\in\bbr$
and let $\nu$ be a L\'evy measure such that
\begin{align}\label{HP:finite1}
0 \ne\int_{|u|>1}|u|\nu(du)<+\infty,\quad \int_{-1}^{1}u^2\nu(du)\ne 0.
\end{align}
Then, 
\begin{align}\label{eq:ExtZeroSizeBias}
Cov(X, f(X))=&\left(\int_{-1}^{1}u^2\nu(du)\right)\bbe f'(X+U)+m\bbe f(X+V_+)-m\bbe f(X+V_-), 
\end{align}
for all bounded Lipschitz functions $f$, where $m_{\pm}$ and $m$ are defined by
\begin{align*}
m_+=\int_{1}^{+\infty}u\nu(du),\quad m_-=\int_{-\infty}^{-1}(-u)\nu(du),\quad m=m_++m_-,
\end{align*}
and where the random variables $X$, $U$, $V_+$ and $V_-$ are independent, with the laws of $U$, $V_+$ and $V_-$ 
respectively given by 
\begin{align*}
&\mu_{U}(du)=\dfrac{\eta_+(u)\bbone_{(0,1)}(u)+\eta_-(u)\bbone_{(-1,0)}(u)}{\int_{-1}^{+1}u^2\nu(du)}du,\\
&\mu_{V_+}(du)=\frac{m_-}{m}\delta_0(u)+\frac{u}{m}\bbone_{(1,+\infty)}(u)\nu(du),\\
&\mu_{V_-}(du)=\frac{m_+}{m}\delta_0(u)+\frac{-u}{m}\bbone_{(-\infty,-1)}(u)\nu(du), 
\end{align*}
if and only if 
$X\sim ID(b,0,\nu)$.
\end{prop}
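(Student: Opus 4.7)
The plan is to combine the strategies of Corollary \ref{cor:ExtSizeBias} and Proposition \ref{local}: use Theorem \ref{thm3.1} (equivalently \eqref{eq3.7}) as a starting point, split the integrating region at $|u|=1$, and handle the two pieces separately with a zero-bias-type integration by parts for the small jumps, and a size-bias-type rewriting for the large jumps.

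For the direct implication, assume $X\sim ID(b,0,\nu)$. Under \eqref{HP:finite1}, Theorem \ref{thm3.1} (in the covariance form \eqref{eq3.7}) applies, so for every bounded Lipschitz $f$,
\begin{align*}
Cov(X,f(X))=\bbe\int_{-1}^{1}(f(X+u)-f(X))u\nu(du)+\bbe\int_{|u|>1}(f(X+u)-f(X))u\nu(du).
\end{align*}
For the first piece, I would write $f(X+u)-f(X)=\int_0^u f'(X+v)\,dv$ for $u>0$ and $f(X+u)-f(X)=-\int_u^0 f'(X+v)\,dv$ for $u<0$ (legitimate since $f$ is Lipschitz, hence absolutely continuous, with bounded $f'$ a.e.), and then apply Fubini to interchange the $dv$ and $u\nu(du)$ integrations. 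This yields
\begin{align*}
\bbe\int_{-1}^{1}(f(X+u)-f(X))u\nu(du)=\bbe\!\int_0^1\! f'(X+v)\eta_+(v)\,dv+\bbe\!\int_{-1}^0\! f'(X+v)\eta_-(v)\,dv,
\end{align*}
which, by the very definition of $\mu_U$ and the normalizing constant $\int_{-1}^{1}u^2\nu(du)$, equals $\left(\int_{-1}^{1}u^2\nu(du)\right)\bbe f'(X+U)$. The Fubini step is justified because $v\,\eta_+(v)\le \int_0^1 u^2\nu(du)<+\infty$ (analogously for $\eta_-$), so the density of $U$ is integrable and $f'$ is bounded.

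For the large-jump piece, note that $u\,\bbone_{|u|>1}\nu(du)$ is a \emph{finite} signed measure by \eqref{HP:finite1}, with positive and negative parts of total masses $m_+$ and $m_-$. A direct computation from the stated laws of $V_\pm$ gives
\begin{align*}
m\bbe f(X+V_+)&=m_-\bbe f(X)+\bbe\!\int_1^{+\infty}\!\!f(X+u)u\nu(du),\\
m\bbe f(X+V_-)&=m_+\bbe f(X)+\bbe\!\int_{-\infty}^{-1}\!\!f(X+u)(-u)\nu(du),
\end{align*}
so that $m\bbe f(X+V_+)-m\bbe f(X+V_-)=-(m_+-m_-)\bbe f(X)+\bbe\int_{|u|>1}f(X+u)u\nu(du)$. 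This matches exactly $\bbe\int_{|u|>1}(f(X+u)-f(X))u\nu(du)$ since $\int_{|u|>1}u\nu(du)=m_+-m_-$. Summing the two pieces yields \eqref{eq:ExtZeroSizeBias}.

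For the converse, I would take $f(x)=e^{itx}$ in \eqref{eq:ExtZeroSizeBias} (a bounded Lipschitz function after the standard truncation used in the proof of Theorem \ref{thm3.1}). Writing $\varphi(t)=\bbe e^{itX}$ and noting that $f'(x)=itf(x)$, the identity turns into an ODE of the form $\varphi'(t)=i\varphi(t)\,\Psi(t)$, where $\Psi$ gathers the three integral terms on the right. A direct check, reversing the same Fubini manipulations as in the direct part, shows that $\Psi(t)=b+\int_{\bbr}(e^{itu}-\bbone_{|u|\le 1})u\nu(du)$, which is exactly the Fourier characterization appearing in \eqref{eq3.5}. Integrating as in the proof of Theorem \ref{thm3.1} identifies $\varphi$ with the characteristic function of $ID(b,0,\nu)$. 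Alternatively, one can invoke the converse part of Theorem \ref{thm3.1} directly after using the above bookkeeping to rewrite \eqref{eq:ExtZeroSizeBias} as \eqref{eq3.7}.

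The main obstacle is bookkeeping rather than deep analysis: verifying the Fubini swap in the small-jump term and, above all, tracking signs and the role of the $\delta_0$ masses $m_-/m$ and $m_+/m$ in $\mu_{V_\pm}$. These atoms are engineered precisely to absorb the $-f(X)\int_{|u|>1}u\nu(du)$ term so that the two size-bias expectations reconstruct the full centered integral $\bbe\int_{|u|>1}(f(X+u)-f(X))u\nu(du)$; a sign error there would break the characterization.
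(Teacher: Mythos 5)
Your proposal is correct and follows essentially the same route as the paper's proof: split the covariance identity of Theorem \ref{thm3.1} at $|u|=1$, integrate by parts and apply Fubini on the small-jump part to produce the $U$ term, and rewrite the large-jump part via the atoms at $0$ in $\mu_{V_\pm}$, with the converse obtained from the converse of Theorem \ref{thm3.1} or complex exponentials. The sign and mass bookkeeping you carried out (including $\int_{|u|>1}u\nu(du)=m_+-m_-$) matches the paper's computation.
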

\begin{proof}
First, let $X\sim ID(b,0,\nu)$, and denote its law by $\mu$.  
Then, from Theorem \ref{thm3.1}, for any bounded Lipschitz function,
\begin{align*}
Cov(X,f(X))&= \bbe \int_{-\infty}^{+\infty}(f(X+u)-f(X))u\nu(du)\nonumber\\
&=\bbe \int_{|u|\leq 1}(f(X+u)-f(X))u\nu(du)+\bbe \int_{|u|> 1}(f(X+u)-f(X))u\nu(du).  
\end{align*}
Now, performing steps similar to thoses of Proposition \ref{local} and Corollary \ref{cor:ExtSizeBias} 
for,  respectively, the first and second terms of the previous sum. For the first one,
\begin{align*}
\bbe \int_{|u|\leq 1}(f(X+u)-f(X))u\nu(du)&=\bbe \int_{0}^1(f(X+u)-f(X))u\nu(du)+\bbe \int_{-1}^0(f(X+u)-f(X))u\nu(du)\nonumber\\
&=\int_{-\infty}^{+\infty}  \int_0^{1}\left(\int_0^u f'(x+v)dv\right)u\nu(du)d\mu(dx)\nonumber\\
&\qquad +\int_{-\infty}^{+\infty}  \int_{-1}^{0}\left(\int_u^0 f'(x+v)dv\right)(-u)\nu(du)\mu(dx)\nonumber\\
&=\int_{-\infty}^{+\infty}  \int_0^{1}f'(x+v)\eta_+(v)dv\mu(dx)\nonumber\\
&\qquad +\int_{-\infty}^{+\infty}  \int_{-1}^{0}f'(x+v)\eta_-(v)dvd\mu(dx)\nonumber\\
&=\int_{-\infty}^{+\infty}  \int_{-\infty}^{+\infty}  f'(x+v)\bigg(\eta_+(v)\bbone_{(0,1)}(v)\nonumber\\
&\qquad \qquad \qquad \qquad \qquad  \qquad \qquad +\eta_-(v)\bbone_{(-1,0)}(v)\bigg)dv\mu(dx)\nonumber\\
&=\int_{-\infty}^{+\infty} \int_{-\infty}^{+\infty}  f'(x+v) \eta(v)dv\mu(dx).  
\end{align*}
For the second term,
\begin{align*}
\bbe \int_{|u|> 1}(f(X+u)-f(X))u\nu(du)&=\bbe \int_{|u|> 1}f(X+u)u\nu(du)-\bbe f(X) \int_{|u|> 1}u\nu(du)\nonumber\\
&=\bbe \int_{1}^{+\infty}f(X+u)u\nu(du)-\bbe \int_{-\infty}^{-1}f(X+u)(-u)\nu(du)\nonumber\\
&\qquad\qquad \qquad \qquad -\bbe f(X) \int_{|u|> 1}u\nu(du)\nonumber\\
&=m\left(\bbe f(X+V_+)-\bbe f(X+V_-)\right).  
\end{align*}
The conclusion then easily follows from the very definition of $U$, $V_+$ and $V_-$ and the assumption \eqref{HP:finite1}.  
The converse implication is a direct consequence of the converse part of Theorem \ref{thm3.1} or follows, as before, 
by taking $f(\cdot)=e^{it\cdot}, t\in \bbr$ in \eqref{eq:ExtZeroSizeBias}.  
\end{proof}

\section{General Upper Bounds by Fourier Methods}
The Fourier methodology developed 
in \cite{AMPS17} to study the Stein-Tikhomirov method (see \cite{Tk81}) provides rates of convergence in Kolmogorov or in smooth-Wasserstein distance 
for sequences $(X_n)_{n\ge 1}$ converging  
towards $X_\infty$.   This approach  leads to 
quantitative estimates when $X_\infty$ is either a second order Wiener chaos, 
or the Dickman distribution or the symmetric $\alpha$-stable one.  
Corollary \ref{cor:ExtSizeBias},  or Proposition \ref{local} or, even, the stable characterizing identities of the previous section allow extensions of the aforementioned estimates to classes of infinitely divisible sequences. The forthcoming results are general and intersect some of those on the Dickman distribution presented in \cite{AMPS17}.
Before stating them, recall that the smooth Wasserstein distance, $d_{{W}_r}$, for $r\geq 0$, is given by
\begin{align}
d_{{W}_r}(X,Y)=\underset{h\in \mathcal{H}_r}{\sup}|\bbe h(X)-\bbe h(Y)|,
\end{align}
where $ \mathcal{H}_r$ is the set of continuous functions which are $r$-times continuously differentiable 
such that $\|h^{(k)}\|_\infty\leq 1$, for all $0\leq k\leq r$, where $h^{(0)}=h$ and 
$h^{(k)}$ is the $k$-th derivative of $h$, while very classicaly, the Kolmogorov distance is
\begin{align*}
d_K(X,Y):=\underset{x\in\mathbb{R}}{\sup}\, |\mathbb{P}(X\leq x)-\mathbb{P}(Y\leq x)|.  
\end{align*}
Note that, by an approximation argument, $d_K(X,Y)\leq d_{{W}_0}(X,Y)$, and that by another approximation argument (see e.g. Appendix $A$ of \cite{AMPS17} or Lemma \ref{lem: repsmwa	s} of the Appendix), the smooth Wasserstein distance $d_{{W}_r}$, for $r\geq 1$ admits the following representation
\begin{align}\label{eq:repcinc}
d_{{W}_r}(X,Y)=\underset{h\in C^{\infty}_c(\mathbb{R})\cap  \mathcal{H}_r}{\sup}|\bbe\, h(X)-\bbe\, h(Y)|,
\end{align}
where $C^{\infty}_c(\mathbb{R})$ denotes the space of compactly supported, infinitely differentiable functions on $\mathbb{R}$. Moreover, the smooth Wasserstein distances and the classical Wasserstein distances are ordered in the following way
\begin{align}
d_{W_r}(X,Y) \leq d_{W_1}(X,Y)\leq W_1(X,Y)\leq W_p(X,Y)
\end{align}
for all $r\geq 1$ and where, for all $p\geq 1$ and for all random variables $X,Y$ with finite absolute $p$-th moment
\begin{align}\label{eq:wasserp}
W^p_p(X,Y)= \inf\bbe |X-Y|^p
\end{align}
with the infimum taken over the set of probability measures on $\mathbb{R}\times \mathbb{R}$ with marginals given by the law of $X$ and the law of $Y$. Recall also that convergence in Wasserstein-$p$ distance is equivalent to convergence in law and convergence of the $p$-th absolute moments (see e.g. \cite[Theorem 6.9]{Vbook2}).
\begin{thm}\label{thm4.1}
Let $X_n \sim ID(b_n,0,\nu_n)$, $n\geq 1$, be a sequence of nondegenerate random variables converging 
in law towards $X_\infty \sim ID(b_\infty,0,\nu_\infty)$ (nondegenerate), with also $\bbe |X_n|<+\infty$, $\bbe |X_\infty|<+\infty$ and
\begin{align}\label{HP:Reg0}
&\int_{-1}^{+1}|u|\nu_n(du)<+\infty, \quad\quad \int_{-1}^{+1}|u|\nu_\infty(du)<+\infty,
\end{align}
$n\geq 1$. Further, for all $t\in\mathbb{R}$, let
\begin{align}\label{HP:PolynBound}
|\varphi_\infty(t)|\int_0^{|t|}\dfrac{ds}{|\varphi_\infty(s)|} \leq C_\infty |t|^{p_\infty},
\end{align}
where $\varphi_\infty$ is the characteristic function of $X_\infty$ and where $C_\infty>0$, $p_\infty\geq 1$.  
Let the law of $X_\infty$ be absolutely continuous with respect to the Lebesgue measure and 
have a bounded density. Then,
\begin{align*}
d_K(X_n,X_\infty)\leq C'_\infty \Delta_n^{\frac{1}{p_\infty+2}},
\end{align*}
where
\begin{align*}
\Delta_n=& |(m^n_0)^+-(m^{\infty}_0)^+|+|(m^n_0)^--(m^{\infty}_0)^-| \\
&\qquad \qquad \qquad +(m^\infty_0)^+\bbe |Y_n^+-Y_\infty^+|+(m^\infty_0)^-\bbe |Y_n^--Y_\infty^-|.
\end{align*}
where $(m^n_0)^{\pm},Y_n^{\pm}$ and 
$(m^{\infty}_0)^{\pm},Y_\infty^{\pm}$ are the quantities defined in 
Corollary \ref{cor:ExtSizeBias} respectively associated with $X_n$ and $X_\infty$.
\end{thm}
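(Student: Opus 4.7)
The plan is to follow the Stein--Tikhomirov strategy: use the size-bias characterization of Corollary \ref{cor:ExtSizeBias} applied to complex exponentials to obtain first-order ODEs for the characteristic functions $\varphi_n$ and $\varphi_\infty$, derive from them a Duhamel-type representation for the difference $\varphi_n-\varphi_\infty$, bound it pointwise using the Lipschitz regularity of the characteristic functions of the additive size-bias random variables, and then convert this into a Kolmogorov bound via Esseen's smoothing inequality, ending with an optimization in the truncation parameter~$T$.

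Concretely, I would first apply \eqref{eq:ExtSizeBias} with $f(x)=e^{isx}$ (treating its real and imaginary parts, both bounded Lipschitz) to obtain
\begin{equation*}
\varphi_n'(s)=iL_n(s)\varphi_n(s),\qquad \varphi_\infty'(s)=iL_\infty(s)\varphi_\infty(s),
\end{equation*}
where $L_n(s)=(m_0^n)^+\psi_n^+(s)-(m_0^n)^-\psi_n^-(s)$, $\psi_n^\pm(s)=\bbe\, e^{isY_n^\pm}$, and analogously for the limiting objects. Since $\varphi_\infty(t)=\exp\bigl(i\int_0^t L_\infty(s)\,ds\bigr)$, the function $1/\varphi_\infty$ is a natural integrating factor; with $\Phi_n(t):=\varphi_n(t)-\varphi_\infty(t)$ vanishing at the origin, a direct computation of $(d/dt)(\Phi_n/\varphi_\infty)$ followed by integration and the estimate $|\varphi_n|\leq 1$ yields
\begin{equation*}
|\Phi_n(t)|\leq |\varphi_\infty(t)|\int_0^{|t|}\dfrac{|L_n(s)-L_\infty(s)|}{|\varphi_\infty(s)|}\,ds.
\end{equation*}

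The next step is to estimate $|L_n(s)-L_\infty(s)|$. A standard add-and-subtract, together with the Lipschitz bound $|\psi_n^\pm(s)-\psi_\infty^\pm(s)|\leq|s|\,\bbe|Y_n^\pm-Y_\infty^\pm|$ (valid for any coupling, and hence for the minimal one), produces
\begin{equation*}
|L_n(s)-L_\infty(s)|\leq |(m_0^n)^+-(m_0^\infty)^+|+|(m_0^n)^--(m_0^\infty)^-|+(m_0^\infty)^+|s|\bbe|Y_n^+-Y_\infty^+|+(m_0^\infty)^-|s|\bbe|Y_n^--Y_\infty^-|,
\end{equation*}
which is bounded by $\Delta_n(1+|s|)$. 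Plugging this back into the Duhamel bound and invoking the growth hypothesis \eqref{HP:PolynBound} then gives $|\varphi_n(t)-\varphi_\infty(t)|\leq C\,\Delta_n\,|t|^{p_\infty}(1+|t|)$.

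Finally, Esseen's smoothing inequality---applicable since the density of $X_\infty$ is bounded---asserts that for any $T>0$,
\begin{equation*}
d_K(X_n,X_\infty)\leq \dfrac{1}{\pi}\int_{-T}^{T}\dfrac{|\varphi_n(t)-\varphi_\infty(t)|}{|t|}\,dt+\dfrac{C'}{T}.
\end{equation*}
Substituting the pointwise bound above and using $p_\infty\geq 1$ to integrate $|t|^{p_\infty-1}(1+|t|)$ produces $d_K(X_n,X_\infty)\leq C''\Delta_n T^{p_\infty+1}+C'/T$. Balancing both terms at $T\sim\Delta_n^{-1/(p_\infty+2)}$ yields the announced rate $\Delta_n^{1/(p_\infty+2)}$. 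The most delicate part of the argument is not any single estimate but the bookkeeping that leads to the linear-in-$|s|$ control $|L_n(s)-L_\infty(s)|\leq\Delta_n(1+|s|)$: any worse polynomial growth would enlarge the exponent $p_\infty+2$ in the final rate, while the polynomial assumption \eqref{HP:PolynBound} is precisely tailored to turn this linear growth into the integrable factor $|t|^{p_\infty-1}(1+|t|)$ on the Esseen side.
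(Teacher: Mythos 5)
Your proposal is correct and follows essentially the same route as the paper: the paper also applies the size-bias identity to $f(\cdot)=e^{it\cdot}$, obtains the first-order ODEs for $\varphi_n$ and $\varphi_\infty$ (its $S_\infty$ and $\Delta_n^\pm$ play the role of your $L_\infty$ and $L_n-L_\infty$), derives the same Duhamel representation $\varepsilon_n(t)=i\varphi_\infty(t)\int_0^t \frac{\varphi_n(s)}{\varphi_\infty(s)}(\Delta_n^+(s)-\Delta_n^-(s))\,ds$, bounds the integrand linearly in $|s|$ via the Lipschitz estimate on the characteristic functions of $Y_n^\pm$, and concludes with Esseen's inequality and the choice $T=\Delta_n^{-1/(p_\infty+2)}$. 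No gaps to report.
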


\begin{proof} 
%We start with the proof of (i). 
From Corollary \ref{cor:ExtSizeBias} applied to $X_n$ and $X_\infty$, let
\begin{align*}
&\Delta_n^{\pm}(t):=(m^n_0)^{\pm}\varphi_{Y^{\pm}_n}(t)-(m^{\infty}_0)^{\pm}\varphi_{Y^{\pm}_\infty}(t),\\
%&\Delta_n^-(t):=(m^n_0)^-\varphi_{Y^-_n}(t)-(m^{\infty}_0)^-\varphi_{Y^-_\infty}(t)\\
&S_\infty(t):=(m_0^{\infty})^+\varphi_{Y^+_\infty}(t)-(m_0^{\infty})^-\varphi_{Y^-_\infty}(t),\\
&\varepsilon_n(t):=\varphi_n(t)-\varphi_\infty(t)
\end{align*}
where $\varphi_{Y^{\pm}_n}$ and $\varphi_{Y^{\pm}_\infty}$ are the characteristic functions of $Y^{\pm}_n$ and $Y^{\pm}_\infty$. Now, thanks to the identity (\ref{eq:ExtSizeBias}) applied to the test functions $f(\cdot) =e^{i t .}$,
\begin{align*}
&\frac{1}{i}\dfrac{d}{dt}\big(\varphi_\infty(t)\big)=\varphi_\infty(t)S_\infty(t),\\
&\frac{1}{i}\dfrac{d}{dt}\big(\varphi_n(t)\big)=\varphi_n(t)S_\infty(t)+\varphi_n(t)\Delta_n^+(t)-\varphi_n(t)\Delta_n^-(t).
\end{align*}
Subtracting these last two expressions gives, recalling also that the characteristic function of an ID law never vanishes,
\begin{align*}
\dfrac{d}{dt}\big(\varepsilon_n(t)\big)=\frac{\varepsilon_n(t)}{\varphi_\infty(t)}\frac{d}{dt}(\varphi_\infty(t))+i\varphi_n(t)(\Delta_n^+(t)-\Delta_n^-(t)),
\end{align*}
since,
\begin{align*}
S_\infty(t)=\frac{1}{i\varphi_\infty(t)}\frac{d}{dt}(\varphi_\infty(t)).
\end{align*}
Then, straightforward computations imply that for all $t\geq 0$:
\begin{align*}
\varepsilon_n(t)=i\varphi_\infty(t)\int_0^t \frac{\varphi_n(s)}{\varphi_\infty(s)}\big((\Delta_n^+(s)-\Delta_n^-(s))\big)ds,
\end{align*}
and similarly for $t\leq 0$. Let us next bound the difference, $\Delta_n^+(s)-\Delta_n^-(s)$. First,
\begin{align*}
|\Delta_n^+(s)-\Delta_n^-(s)|\leq I+II+III+IV,
\end{align*}
where
\begin{align*}
&I:=|(m^n_0)^+-(m^{\infty}_0)^+|,\\
&II:=|(m^n_0)^--(m^{\infty}_0)^-|,\\
&III:=(m^{\infty}_0)^+ |\varphi_{Y^+_n}(s)-\varphi_{Y^+_\infty}(s)| \leq (m^{\infty}_0)^+ |s| \bbe |Y^+_n-Y^+_{\infty}|,\\
&IV:=(m^{\infty}_0)^- |\varphi_{Y^-_n}(s)-\varphi_{Y^-_\infty}(s)|\leq (m^{\infty}_0)^- |s| \bbe |Y^-_n-Y^-_{\infty}|.
\end{align*}
Hence,
\begin{align*}
|\varepsilon_n(t)|&\leq |\varphi_\infty(t)|\int_0^t \dfrac{ds}{|\varphi_\infty(s)|}
\left(|(m^n_0)^+-(m^{\infty}_0)^+|+|(m^n_0)^--(m^{\infty}_0)^-|\right)\nonumber \\
&\qquad +|\varphi_\infty(t)|\int_0^t \dfrac{|s| ds}{|\varphi_\infty(s)|}\left((m^{\infty}_0)^+ \bbe |Y^+_n-Y^+_{\infty}|
+ (m^{\infty}_0)^- \bbe |Y^-_n-Y^-_{\infty}|\right).
\end{align*}
Then, using \eqref{HP:PolynBound}, together with the definition of $\Delta_n$, leads to:
\begin{align}\label{eq:epsilon}
|\varepsilon_n(t)|&\leq C_\infty (|t|^{p_\infty}+|t|^{p_\infty+1})\Delta_n.
\end{align}
Since the law of $X_\infty$ has a bounded density applying the classical Esseen inequality 
(see e.g. \cite[Theorem 5.1]{Pet95}) gives, for all $T>0$,
\begin{align}\label{ineq:Ess}
d_K(X_n,X_\infty)\leq C_1\int_{-T}^T \dfrac{|\varepsilon_n(t)|}{|t|}dt+C_2\dfrac{\|h\|_\infty}{T},
\end{align}
where $C_1$ and $C_2$ are positive (absolute) constants, while $\|h\|_\infty$ is the essential supremum of the 
density $h$ of the law of $X_\infty$.  Next, plugging \eqref{eq:epsilon} into \eqref{ineq:Ess},
\begin{align*}
d_K(X_n,X_\infty) \leq C'_1\bigg(T^{p_\infty}+T^{p_\infty+1}\bigg)\Delta_n+\frac{C_2^\prime}{T}.
\end{align*}
The choice $T=(1/\Delta_n)^{\frac{1}{p_\infty+2}}$ concludes the proof.
\end{proof}

\begin{rem}

(i) Let us briefly discuss the growth condition on the limiting characteristic function $\varphi$, 
namely, the requirement that for all $t\in \mathbb{R}$,
\begin{align}\label{ineq:PolyBouPhi}
L(\varphi)(t):=|\varphi(t)|\int_0^{|t|}\frac{ds}{|\varphi(s)|}\leq C|t|^p,
\end{align}
for some $C>0$ and $p\geq 1$. When the limiting law is the standard normal one, 
the functional $L(\varphi)$ is the Dawson integral associated with the normal distribution. 
It decreases to $0$ at infinity, and for $t\in \mathbb{R}$,
\begin{align}
L(\varphi)(t):=e^{\frac{-t^2}{2}}\int_0^{|t|}e^{\frac{s^2}{2}}ds\leq \dfrac{2|t|}{1+t^2}.
\end{align}
%It is called the generalized Dawson function in \cite{AMPS17} for general limiting law. 
Different behaviors are possible for this (generalized Dawson) functional, see \cite{AMPS17}. 
As detailed below, in a general gamma setting, \eqref{ineq:PolyBouPhi} holds true with $p=1$ while in the stable case (see Lemma $10$ of Appendix $B$ 
in \cite{AMPS17}), for $1<\alpha<2$, and $t>0$,
\begin{align}\label{GenDawStab}
L(\varphi)(t)\leq \left(t^{1-\alpha}/c+C e^{-ct^\alpha}\right),
\end{align}
where $C=\int_{0}^{c^{-{1/\alpha}}}\!\!e^{c s^\alpha}ds$ and $c=c_1+c_2$. In particular, $C\leq e/c^{{1/\alpha}}$.
Moreover, for $t$ small, \eqref{GenDawStab} can be replaced by,
\begin{align*}
L(\varphi)(t)\leq |t|.
\end{align*}
Then, for some constant $C'>0$ only depending on $\alpha$ and $c$, and for all $t\in\mathbb{R}$,
\begin{align}\label{ineq:FineBoundStable}
L(\varphi)(t)\leq C'\dfrac{|t|}{1+|t|^{\alpha}}.
\end{align}

For the Dickman distribution as considered in \cite{AMPS17}, a linear growth can also be obtained from 
the corresponding characteristic function.\\
\\
(ii) As well known,  
\begin{align}\label{ineq:KW1}
d_K(X_n, X_\infty) \le \sqrt{2\|h\|_\infty W_1 (X_n, X_\infty)},
\end{align}
where again $\|h\|_\infty$ is the supremum norm of $h$, the bounded density of the law of $X_\infty$, and where $W_1$ 
denotes the 1-Wasserstein distance as in \eqref{eq:wasserp} which also admits the following representation
$$W_1(X,Y)=\break\underset{h\in Lip(1)}{\sup}|\bbe h(X) -\bbe h(Y)|,$$
for $X,Y$ random variables with finite first moment. Therefore, to go beyond the bounded density case, e.g., to consider 
discrete limiting laws, it is natural to 
explore convergence rates in (smooth) Wasserstein.  Under uniform (exponential) integrability, such issues 
can be tackled.  For example,  instead of the bounded density assumption, let, for some $\lambda>0$ and $\alpha\in (0,1]$,
\begin{align}\label{eq:TailSeq}
\underset{n\geq 1}{\sup}\, \bbe\, e^{\lambda |X_n|^\alpha}<+\infty, \quad i.e., \quad 
\underset{n\geq 1}{\sup}\, \int_{|u|>1} e^{\lambda |u|^\alpha}\nu_n(du)<+\infty,
%\quad\int_{|u|>1} e^{\lambda_\infty |u|^\alpha}\nu_\infty(du)<+\infty.
\end{align}
then,
\begin{align}\label{convW}
d_{{W}_{p_\infty+2}}(X_n,X_\infty) \leq C_{\infty}' \Delta_n |\ln \Delta_n|^{\frac{1}{2\alpha}},
\end{align} 
where $\Delta_n$ is as in the previous theorem.  
The proof of \eqref{convW} uses the pointwise estimate \eqref{eq:epsilon} 
%\begin{align}
%|\varepsilon_n(t)|&\leq C_\infty (|t|^{p_\infty}+|t|^{p_\infty+1})\Delta_n,
%\end{align}
combined with the assumption $\eqref{eq:TailSeq}$ and with the statement and conclusion of \cite[Theorem 1]{AMPS17}.  
\end{rem}
\noindent
Proposition \ref{local} also provides quantitative upper bounds on the Kolmogorov 
distance. This is the content of the next proposition whose statement is similar to that of Theorem \ref{thm4.1}.

\begin{prop}\label{thm4.2}
Let $X_n\sim ID(b_n, 0,\nu_n)$, $n\geq 1$, be a sequence of nondegenerate random variables converging in law towards 
$X_\infty\sim ID(b_\infty, 0,\nu_\infty)$ (nondegenerate) and such that $\bbe |X_n|^2<+\infty$, $\bbe |X_\infty|^2<+\infty$, $n\geq 1$.  
Let also, for all $t\in\mathbb{R}$,
\begin{align}\label{HP:thm4.2}
|\varphi_\infty(t)|\int_0^{|t|} \dfrac{ds}{|\varphi_\infty(s)|}\leq C_\infty |t|^{p_\infty},
\end{align}
where $\varphi_\infty$ is the characteristic function of $X_\infty$ and where $C_\infty>0$, $p_\infty\geq 1$.  
Let the law of $X_\infty$ be absolutely continuous with respect to the Lebesgue measure and have a bounded density, then
\begin{align}
d_K(X_n,X_\infty)\leq C'_\infty \Delta_n^{\frac{1}{p_\infty+3}},
\end{align}
where
\begin{align}
\Delta_n=|\eta_n-\eta_\infty|+|\bbe X_n-\bbe X_\infty|+\bbe |Y_n-Y_\infty|,
\end{align}
where $Y_n$ and 
$Y_\infty$ are the random variables defined in
Proposition \ref{local} respectively associated with $X_n$ and $X_\infty$ respectively and where,
\begin{align*}
\eta_n:=\int_{-\infty}^{+\infty} u^2\nu_n(du),\quad \quad \eta_\infty:=\int_{-\infty}^{+\infty} u^2\nu_\infty(du).
\end{align*}
\end{prop}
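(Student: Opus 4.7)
The plan is to mirror the argument of Theorem~\ref{thm4.1}, but with Corollary~\ref{cor:ExtSizeBias} replaced by the extended zero-bias characterization of Proposition~\ref{local}. First, I apply \eqref{eq:ExtZeroBias} with the test function $f(x)=e^{itx}$ to both $X_n$ and $X_\infty$. Using $\bbe X e^{itX}=\varphi'(t)/i$ and $(e^{itx})'=ite^{itx}$, this yields
\begin{align*}
\frac{1}{i}\varphi_\infty'(t)=\varphi_\infty(t)S_\infty(t),\qquad \frac{1}{i}\varphi_n'(t)=\varphi_n(t)S_n(t),
\end{align*}
where $S_k(t):=\bbe X_k+it\eta_k\varphi_{Y_k}(t)$ for $k\in\{n,\infty\}$ and $\varphi_{Y_k}$ is the characteristic function of $Y_k$ given by Proposition~\ref{local}.

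Next, I set $\varepsilon_n:=\varphi_n-\varphi_\infty$. Subtracting the two ODEs and using that $\varphi_\infty$ never vanishes (being an ID characteristic function), so that $S_\infty(t)=\varphi_\infty'(t)/(i\varphi_\infty(t))$, one obtains
\begin{align*}
\varepsilon_n'(t)=\varepsilon_n(t)\frac{\varphi_\infty'(t)}{\varphi_\infty(t)}+i\varphi_n(t)\bigl(S_n(t)-S_\infty(t)\bigr).
\end{align*}
Using $1/\varphi_\infty(t)$ as an integrating factor and the initial condition $\varepsilon_n(0)=0$ then gives the integral representation
\begin{align*}
\varepsilon_n(t)=i\varphi_\infty(t)\int_0^t\frac{\varphi_n(s)}{\varphi_\infty(s)}\bigl(S_n(s)-S_\infty(s)\bigr)\,ds,
\end{align*}
with the analogous formula for $t\le 0$.

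The central estimate is a triangle-inequality decomposition of $S_n-S_\infty$. Writing
\begin{align*}
|S_n(s)-S_\infty(s)|\le |\bbe X_n-\bbe X_\infty|+|s|\,|\eta_n-\eta_\infty|+s^2\,\eta_\infty\,\bbe|Y_n-Y_\infty|,
\end{align*}
where the last term uses $|\varphi_{Y_n}(s)-\varphi_{Y_\infty}(s)|\le|s|\bbe|Y_n-Y_\infty|$, and then invoking the growth hypothesis \eqref{HP:thm4.2} term by term, I obtain the pointwise bound
\begin{align*}
|\varepsilon_n(t)|\le C_\infty'\bigl(|t|^{p_\infty}+|t|^{p_\infty+1}+|t|^{p_\infty+2}\bigr)\Delta_n.
\end{align*}
The decisive change compared with Theorem~\ref{thm4.1} is the appearance of the $s^2$ term: it is produced by the derivative in the characterizing identity \eqref{eq:ExtZeroBias}, which contributes an extra factor of $it$ when applied to $e^{itx}$. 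Finally, the classical Esseen inequality gives
\begin{align*}
d_K(X_n,X_\infty)\le C_1\int_{-T}^T\frac{|\varepsilon_n(t)|}{|t|}\,dt+C_2\frac{\|h\|_\infty}{T}\le C_1'T^{p_\infty+2}\Delta_n+\frac{C_2'}{T},
\end{align*}
and the optimal choice $T=\Delta_n^{-1/(p_\infty+3)}$ produces the claimed exponent. The main obstacle is simply tracking this extra power of $|t|$: it is what degrades the rate from $1/(p_\infty+2)$ in Theorem~\ref{thm4.1} to $1/(p_\infty+3)$ here, and one must be careful that the growth assumption on $\varphi_\infty$ still dominates the $|t|^{p_\infty+2}$ contribution uniformly on $[-T,T]$; otherwise the scheme is entirely parallel to the proof of Theorem~\ref{thm4.1}.
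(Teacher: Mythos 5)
Your proposal is correct and follows essentially the same route as the paper's proof: apply the zero-bias identity \eqref{eq:ExtZeroBias} to $f(x)=e^{itx}$, derive the first-order ODE for $\varepsilon_n=\varphi_n-\varphi_\infty$, integrate with $1/\varphi_\infty$ as integrating factor, bound $S_n-S_\infty$ (the paper's $\Delta_n(t)$) by the three terms of $\Delta_n$ with the extra powers of $|s|$, invoke \eqref{HP:thm4.2}, and finish with Esseen's inequality and the choice $T=\Delta_n^{-1/(p_\infty+3)}$. The only cosmetic difference is that the $\eta_\infty$ factor in your third term is absorbed into $C'_\infty$, which is harmless since that constant may depend on $X_\infty$.
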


\begin{proof}
The proof of this proposition is very similar to the proof of Theorem \ref{thm4.1} and so it is only sketched. 
From Proposition~\ref{local} applied to $X_n$ and $X_\infty$, let:
\begin{align*}
m_n&:=\bbe X_n,\quad \quad m_\infty:=\bbe X_\infty,\\
\Delta_n(t)&:=t(\eta_\infty\varphi_{Y_\infty}(t)-\eta_n\varphi_{Y_n}(t))+i(m_n-m_\infty),\\
R_\infty(t)&:=-t\eta_\infty\varphi_{Y_\infty}(t)+im_\infty,\\
\varepsilon(t)&:=\varphi_n(t)-\varphi_\infty(t).
\end{align*}
Next, thanks to the identity \eqref{eq:ExtZeroBias} applied to the test functions $f(\cdot)=e^{it.}$,
\begin{align*}
\dfrac{d}{dt}\big(\varphi_n(t)\big)&=R_\infty(t)\varphi_n(t)+\varphi_n(t)\Delta_n(t),\\
\dfrac{d}{dt}\big(\varphi_\infty(t)\big)&=R_\infty(t)\varphi_\infty(t).
\end{align*}
Subtracting the last two expressions, 
\begin{align*}
\dfrac{d}{dt}\big(\varepsilon_n(t)\big)=\dfrac{\varepsilon(t)}{\varphi_\infty(t)}\dfrac{d}{dt}(\varphi_\infty(t))+\varphi_n(t)\Delta_n(t).
\end{align*}
Then, straightforward computations imply that for all $t\geq 0$,
\begin{align*}
\varepsilon_n(t)=\varphi_\infty(t)\int_0^t \frac{\varphi_n(s)}{\varphi_\infty(s)}\Delta_n(s)ds,
\end{align*}
and similarly for $t\leq 0$. Hence, using \eqref{HP:thm4.2} together with standard computations,
\begin{align*}
|\varepsilon_n(t)|\leq C_\infty(|t|^{p_\infty}+|t|^{p_\infty+1}+|t|^{p_\infty+2})\Delta_n.  
\end{align*}
Finally, proceeding as in the end of the proof of Theorem~\ref{thm4.1} concludes the proof. 

\end{proof}
\noindent
\begin{rem}\label{rem:Comparison}
(i) Since the random variables $(Y_n^{\pm},Y_\infty^{\pm})$ (resp.~$(Y_n,Y_\infty)$) in Theorem~\ref{thm4.1} 
(resp.~Proposition \ref{thm4.2}) are independent of $(X_n,X_\infty)$, one can choose any of their couplings. 
In particular, the definition of $\Delta_n$ of Theorem \ref{thm4.1} can be replaced by
\begin{align}\label{eq:DeltaVarW1+}
\Delta_n=&\, |(m^n_0)^+-(m^{\infty}_0)^+|+|(m^n_0)^--(m^{\infty}_0)^-|\nonumber \\
&\qquad +(m^\infty_0)^+W_1(Y_n^+,Y_\infty^+)+(m^\infty_0)^-W_1(Y_n^-,Y_\infty^-).
\end{align}
Similarly the quantity $\Delta_n$ of  Proposition \ref{thm4.2} can be replaced by, 
\begin{align}\label{eq:DeltaVarW1}
\Delta_n=|\eta_n-\eta_\infty|+|\bbe X_n-\bbe X_\infty|+W_1(Y_n,Y_\infty).
\end{align}

(ii) Recall that the Wasserstein-$1$ distance between two random variables $X$ and $\tilde{X}$ 
both having finite first moment,  and respective law $\mu$ and $\tilde{\mu}$ can also be represented as,
\begin{align}\label{eq:repW1}
W_1(X,\tilde{X})=\int_{-\infty}^{+\infty}|F_\mu(t)-F_{\tilde{\mu}}(t)|dt,
\end{align}
where $F_\mu$ and $F_{\tilde{\mu}}$ are the respective cumulative distribution functions of $\mu$ and $\tilde{\mu}$. 
Combining the above with Proposition \ref{local}, \eqref{eq:DeltaVarW1} becomes,
\begin{align}\label{eq:DeltaVarW12}
\Delta_n=&\, |\eta_n-\eta_\infty|+|\bbe X_n-\bbe X_\infty|+\int_{-\infty}^0 \left|\int_{-\infty}^t (-v)(t-v)\left(\frac{\nu_n(dv)}{\eta_n}
-\frac{\nu_\infty(dv)}{\eta_\infty}\right)\right|dt\nonumber\\
&+\int_0^{+\infty}\bigg|\int_0^{+\infty}v (v\wedge t)\left(\frac{\nu_n(dv)}{\eta_n}-\frac{\nu_\infty(dv)}{\eta_\infty}\right) +\int_{-\infty}^0 v^2\left(\frac{\nu_n(dv)}{\eta_n}-\frac{\nu_\infty(dv)}{\eta_\infty}\right)\bigg|dt.
\end{align}

(iii) Next, for the second order chaoses $X_n=\sum_{k=1}^{+\infty}\lambda_{n,k}(Z_k^2-1)/2$, $n\geq 1$ and $X_\infty=\sum_{k=1}^{+\infty}\lambda_{\infty,k}(Z_k^2-1)/2$, with $\lambda_{n,k}>0$ and $\lambda_{\infty,k}>0$, for all $k\geq 1$, $\Delta_n$ in \eqref{eq:DeltaVarW12} becomes
\begin{align}
\Delta_n=2\int_0^{+\infty}\left|\sum_{k=1}^{+\infty}(\lambda_{\infty,k}^2 e^{-\frac{t}{2\lambda_{\infty,k}}}-\lambda_{n,k}^2 e^{-\frac{t}{2\lambda_{n,k}}})\right|dt.
\end{align}
Similar computations can be done using \eqref{eq:DeltaVarW1+} and \eqref{eq:repW1}.\\
\\
(iv) Again, the Kolmogorov distance can be replaced by a smooth Wasserstein one.   
(Replacing also the bounded density assumption.)  Indeed, if for some $\lambda>0$ and $\alpha\in (0,1]$,
%\begin{align}\label{eq:TailSeq2}
$\underset{n\geq 1}{\sup}\, \bbe\, e^{\lambda |X_n|^\alpha}<+\infty$, 
%\quad \bbe\, e^{\lambda_\infty |X_\infty|^\alpha}<+\infty
%\end{align}
then 
\begin{align*}
d_{{W}_{p_\infty+3}}(X_n,X_\infty) \leq C_{\infty}' \Delta_n |\ln \Delta_n|^{\frac{1}{2\alpha}}, 
\end{align*}
as easily seen by simple modifications of the techniques presented above.  

(v) Any sequence of infinitely divisible distributions converging in law has a limiting distribution 
which is itself infinitely divisible, e.g., \cite[Lemma 7.8]{S}. It is thus natural to ask for 
conditions for such convergence 
as well as for quantitative versions of it.  In this regard, \cite[Theorem 8.7]{S} provides
necessary and sufficient conditions ensuring the 
weak convergence of sequences of infinitely divisible distributions.  
Namely, it requires that, as $n\to +\infty$, 
\begin{align*}
\beta_n=b_n+\int_{-\infty}^{+\infty}u\left(c(u)-\bbone_{|u|\leq 1}\right)\nu_n(du)\longrightarrow \beta_\infty,
\end{align*}
and that,
\begin{align*}
\qquad \qquad u^2c(u)d\nu_n \Longrightarrow u^2c(u)d\nu_\infty,
\end{align*}
for some bounded continuous function $c$ from $\mathbb{R}$ to $\mathbb{R}$ such that $c(u)=1+o(|u|)$ 
,as $|u|\rightarrow 0$, and $c(u)= O(1/|u|)$ as $|u|\rightarrow+\infty$. Therefore, Theorem \ref{thm4.1} 
and Proposition \ref{thm4.2} provide quantitative versions of these results. 
%\\}
\end{rem}

\noindent
The previous results do not encompass the case of the stable distributions since neither \eqref{HP:Reg0} nor $\bbe |X_\infty|^2<+\infty$ are satisfied. To obtain 
quantitative convergence results toward more 
general ID distributions, let us present a result for some classes of self-decomposable laws. {\it Below and elsewhere, we follow \cite{S}, and use the terminology increasing or decreasing in a non-strict manner.} 
\begin{prop}\label{propSD}
Let $X_n\sim ID(b_n, 0,\nu_n)$, $n\geq 1$, be a sequence of nondegenerate random variables converging in law towards 
$X_\infty\sim ID(b_\infty, 0,\nu_\infty)$ (nondegenerate) such that $\bbe |X_n|<+\infty$, $n\geq 1$, $\bbe |X_\infty|<+\infty$, and,
\begin{align*}
&\nu_n(du):= \frac{\psi_{1,n}(u)}{u}\bbone_{(0,+\infty)}(u)du+\frac{\psi_{2,n}(-u)}{(-u)}\bbone_{(-\infty,0)}(u)du, \\
&\nu_\infty(du):= \frac{\psi_{1,\infty}(u)}{u}\bbone_{(0,+\infty)}(u)du+\frac{\psi_{2,\infty}(-u)}{(-u)}\bbone_{(-\infty,0)}(u)du,
\end{align*}
where $\psi_{1,n},\psi_{2,n},\psi_{1,\infty}$ and $\psi_{2,\infty}$ are decreasing functions 
on $(0,+\infty)$.
Let also, for all $t\in\mathbb{R}$,
\begin{align}\label{HP:thm4.3}
|\varphi_\infty(t)|\int_0^{|t|} \dfrac{ds}{|\varphi_\infty(s)|}\leq C_\infty |t|^{p_\infty},
\end{align}
where $\varphi_\infty$ is the characteristic function of $X_\infty$ and where $C_\infty>0$, $p_\infty\geq 1$.  
Finally, let the law of $X_\infty$ be absolutely continuous with respect to the Lebesgue measure and have a bounded density. Then,
\begin{align*}
d_K(X_n,X_\infty)\leq C'_\infty (\Delta_n)^{\frac{1}{p_\infty+2}}, 
\end{align*}
where
\begin{align*}
\Delta_n=&|\bbe X_n-\bbe X_\infty|+\int_0^{1}|u||\psi_{1,n}(u)-\psi_{1,\infty}(u)|du+\int_{1}^{+\infty} |\psi_{1,n}(u)-\psi_{1,\infty}(u)|du\\
&\quad+\int_{-1}^0|u||\psi_{2,n}(-u)-\psi_{2,\infty}(-u)|du+\int_{-\infty}^{-1} |\psi_{2,n}(-u)-\psi_{2,\infty}(-u)|du.
\end{align*}
\end{prop}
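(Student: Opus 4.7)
The plan is to follow the same blueprint used for Theorem~\ref{thm4.1} and Proposition~\ref{thm4.2}: derive an ordinary differential equation for the characteristic functions $\varphi_n$ and $\varphi_\infty$ via Theorem~\ref{thm3.1} applied to $f(\cdot)=e^{it\cdot}$, subtract, solve for $\varepsilon_n(t)=\varphi_n(t)-\varphi_\infty(t)$ by an integrating-factor argument, bound the resulting error using the specific form of the L\'evy measures, and finally apply Esseen's inequality with an optimization of the cutoff $T$.

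More concretely, setting
\begin{align*}
A_n(s)=b_n+\int_{-\infty}^{+\infty}(e^{isu}-\bbone_{|u|\le 1})u\nu_n(du),
\end{align*}
and similarly for $A_\infty$, Theorem~\ref{thm3.1} yields $\varphi_n'(t)=i\varphi_n(t)A_n(t)$ and $\varphi_\infty'(t)=i\varphi_\infty(t)A_\infty(t)$. Since the characteristic function of an ID law never vanishes, dividing by $\varphi_\infty$ and using $\varepsilon_n(0)=0$ produces, for $t\geq 0$,
\begin{align*}
\varepsilon_n(t)=i\varphi_\infty(t)\int_0^t\frac{\varphi_n(s)}{\varphi_\infty(s)}\bigl(A_n(s)-A_\infty(s)\bigr)ds,
\end{align*}
with a symmetric expression for $t\leq 0$. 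The key new step is to control $|A_n(s)-A_\infty(s)|$ using the very special structure of the L\'evy measure: on $(0,+\infty)$, $u\nu_n(du)$ has density $\psi_{1,n}(u)$, while the change of variable $v=-u$ on $(-\infty,0)$ turns $u\nu_n(du)$ into $-\psi_{2,n}(v)dv$. Using $b_n=\bbe X_n-\int_{|u|>1}u\nu_n(du)$ to eliminate the drift gives
\begin{align*}
A_n(s)-A_\infty(s)=(\bbe X_n-\bbe X_\infty)+\int_0^{+\infty}(e^{isu}-1)(\psi_{1,n}-\psi_{1,\infty})(u)du+\int_0^{+\infty}(1-e^{-isv})(\psi_{2,n}-\psi_{2,\infty})(v)dv.
\end{align*}

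Splitting each of the two integrals at $1$ and using $|e^{isu}-1|\le |s||u|$ on $(0,1)$ and $|e^{isu}-1|\le 2$ on $(1,+\infty)$, one obtains $|A_n(s)-A_\infty(s)|\le C(1+|s|)\Delta_n$, where $\Delta_n$ is exactly the quantity appearing in the statement. Observe that the finiteness of the weighted $L^1$-norms follows from the L\'evy integrability $\int_0^1 u^2\nu(du)<+\infty$ and from the finite first moment assumption $\int_1^{+\infty}\psi_{1,n}(u)du=\int_1^{+\infty}u\nu_n(du)<+\infty$ (and analogously on $(-\infty,0)$). Plugging this bound into the integral representation of $\varepsilon_n$ and invoking the polynomial growth hypothesis \eqref{HP:thm4.3} yields
\begin{align*}
|\varepsilon_n(t)|\leq C'_\infty\bigl(|t|^{p_\infty}+|t|^{p_\infty+1}\bigr)\Delta_n.
\end{align*}

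The final step is standard: feeding this pointwise estimate into Esseen's inequality
\begin{align*}
d_K(X_n,X_\infty)\leq C_1\int_{-T}^T\frac{|\varepsilon_n(t)|}{|t|}dt+C_2\frac{\|h\|_\infty}{T},
\end{align*}
(valid thanks to the bounded density assumption on $X_\infty$) gives a bound of the form $C'(T^{p_\infty}+T^{p_\infty+1})\Delta_n+C''/T$, and the choice $T=\Delta_n^{-1/(p_\infty+2)}$ produces the claimed rate. I do not anticipate a serious obstacle beyond careful bookkeeping: the decreasing character of the $\psi$'s is not used explicitly in the estimate but encodes the self-decomposability that motivates the statement, and the only delicate point is handling the two decompositions ($|u|\lessgtr 1$ and $u\gtrless 0$) cleanly so as to recover precisely the four $L^1$ discrepancies that make up $\Delta_n$.
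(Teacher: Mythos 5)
Your proposal is correct and follows essentially the same route as the paper's own proof: the paper likewise applies the characterizing identity (in the form \eqref{eq3.7}) with $f(\cdot)=e^{it\cdot}$ to get $\varepsilon_n(t)=i\varphi_\infty(t)\int_0^t\frac{\varphi_n(s)}{\varphi_\infty(s)}\Delta_n(s)\,ds$, bounds $|\Delta_n(s)|\le 2(1+|s|)\Delta_n$ via exactly the splitting at $|u|=1$ you describe, and concludes with \eqref{HP:thm4.3}, Esseen's inequality and the choice $T=\Delta_n^{-1/(p_\infty+2)}$. Your remark that the monotonicity of the $\psi$'s is not used in the estimate itself is consistent with the paper's argument as well.
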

\begin{proof}
Again, this proof  is very similar to the proof of Theorem \ref{thm4.1} and so it is only sketched. Let,
\begin{align*}
&m_n:=\bbe X_n,\quad\quad m_\infty:=\bbe X_\infty,\\
&\Delta_n(t):=m_n-m_\infty+\int_{-\infty}^{+\infty}(e^{itu}-1)u\nu_n(du)-\int_{-\infty}^{+\infty}(e^{itu}-1)u\nu_\infty(du),\\
&S_\infty(t):=m_\infty+\int_{-\infty}^{+\infty}(e^{itu}-1)u\nu_\infty(du),\\
&\varepsilon_n(t):=\varphi_n(t)-\varphi_\infty(t).
\end{align*}
Applying the identity \eqref{eq3.7} to $X_n$ and $X_\infty$ with $f(\cdot)=e^{i t.}$ gives
\begin{align*}
&\dfrac{d}{dt}(\varphi_n(t))=i\Delta_n(t)\varphi_n(t)+iS_\infty(t)\varphi_n(t),\\
&\dfrac{d}{dt}(\varphi_\infty(t))=iS_\infty(t)\varphi_\infty(t), 
\end{align*}
and thus,  
\begin{align*}
\dfrac{d}{dt}(\varepsilon_n(t))=\frac{\varphi'_\infty(t)}{\varphi_\infty(t)}\varepsilon_n(t)+i\Delta_n(t)\varphi_n(t).  
\end{align*}
Therefore, for all $t\geq 0$,
\begin{align*}
\varepsilon_n(t)=i\varphi_\infty(t)\int_0^t \frac{\varphi_n(s)}{\varphi_\infty(s)}\Delta_n(s)ds,
\end{align*}
and similarly for $t\leq 0$.  Let us now bound the quantity $\Delta_n(\cdot)$.
\begin{align*}
|\Delta_n(s)|&\leq |m_n-m_\infty|+\left|\int_{-\infty}^{+\infty}(e^{isu}-1)u\nu_n(du)-\int_{-\infty}^{+\infty}(e^{isu}-1)u\nu_\infty(du)\right|\\
&\leq  2(1+|s|)\bigg(|m_n-m_\infty|++\int_0^{1}|u||\psi_{1,n}(u)-\psi_{1,\infty}(u)|du+\int_{1}^{+\infty} |\psi_{1,n}(u)-\psi_{1,\infty}(u)|du\\
&\quad+\int_{-1}^0|u||\psi_{2,n}(-u)-\psi_{2,\infty}(-u)|du+\int_{-\infty}^{-1} |\psi_{2,n}(-u)-\psi_{2,\infty}(-u)|du\bigg),\\
&\leq 2 (1+|s|) \Delta_n.
\end{align*}
This implies,
\begin{align*}
|\varepsilon_n(t)| \leq C_\infty (|t|^{p_\infty}+|t|^{p_\infty+1})\Delta_n.
\end{align*}
To conclude the proof, proceed as in the end of the proof of Theorem \ref{thm4.1}.
\end{proof}
\noindent
\begin{rem}\label{paretoinfdiv}
Recalling \eqref{ineq:FineBoundStable}, note that the stable distributions do satisfy the assumptions of Proposition \ref{propSD}.  
However, the very specific properties  of their L\'evy measure entail detailled computations in order to reach a precise rate 
of convergence. To illustrate how our methodology can be adapted to obtain a rate of convergence towards 
a stable law, 
we present an example pertaining to the domain of normal attraction of the symmetric 
$\alpha$-stable distribution.  
Let $1<\alpha<2$ and set $c=(1-\alpha)/(2\Gamma(2-\alpha)\cos(\alpha\pi/2))$ and 
$\lambda=(2c)^{{1}/{\alpha}}$.  Then, 
denote by $f_1(x):=\frac{\alpha}{2\lambda}(1+\frac{|x|}{\lambda})^{-\alpha-1}$ the density of the 
Pareto law with parameters $\alpha>0$ and $\lambda>0$.  As well known, this random variable 
is infinitely divisible, see \cite[Chapter IV, Example 11.6]{SV03}, and belongs to the domain of normal attraction of the symmetric $\alpha$-stable distribution, \cite{Pet95}.   Our version of the Pareto density differs from the one considered in \cite{CW93,KK00,X17} 
given by $f(x):=\alpha\lambda^{\alpha}/(2|x|^{\alpha+1})\bbone_{|x|>\lambda}$,  
which is not infinitely divisible.  Indeed, since it is a symmetric distribution, its characteristic function is real-valued, 
and by standard computations,
\begin{align*}
\varphi(s)=1-s^\alpha+\alpha\int_0^1 \dfrac{1-\cos(\lambda y s)}{y^{\alpha+1}}dy \le 1-s^\alpha+\frac{\alpha s^2\lambda^2}{2(2-\alpha)}, 
\end{align*}
$s>0$.  
%Bounding $\alpha\int_0^1 \dfrac{1-\cos(\lambda y s)}{y^{\alpha+1}}dy$ by $\alpha s^2\lambda^2/(2(2-\alpha))$ implies for $s>0$,
%\begin{align}
%\varphi_1(s)\leq 1-s^\alpha+\frac{\alpha s^2\lambda^2}{2(2-\alpha)}.
%\end{align}
Now, it is not difficult to see that the above right-hand side can take negative values, e.g. for $\alpha=3/2$ and $s=2$, contradicting 
infinite divisibility.
\end{rem}

\begin{prop}\label{prop:DNA}
Let $(\xi_i)_{i\geq 1}$ be a sequence of iid random variables such that $\xi_1\sim f_1$. 
For $1 < \alpha < 2$, let $X_n=\sum_{i=1}^n\xi_i/n^{\frac{1}{\alpha}}$, $n\geq 1$, and let $X_\infty\sim S\alpha S$ have characteristic function 
$\varphi_\infty(t)=\exp(-|t|^\alpha)$, $t\in\mathbb{R}$. Then,
\begin{align}
d_K(X_n,X_\infty)\leq \frac{C}{n^{\frac{2}{\alpha}-1}}, 
\end{align}
for some $C>0$ which depends only on $\alpha$.
\end{prop}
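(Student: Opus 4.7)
The plan is to compare $\varphi_{X_n}$ and $\varphi_\infty$ directly through their characteristic functions and then invoke Esseen's smoothing inequality, mirroring the strategy used in Theorem \ref{thm4.1} and Proposition \ref{propSD} but exploiting the explicit structure of the Pareto. Since the $\xi_i$'s are i.i.d., $\varphi_{X_n}(t) = \varphi_1(t/n^{1/\alpha})^n$, where $\varphi_1$ is the characteristic function of $\xi_1$, and the symmetric $\alpha$-stable target has $\varphi_\infty(t) = e^{-|t|^\alpha}$.

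The first step would be to obtain a precise second-order expansion of $\varphi_1$ at the origin. Integration by parts in the symmetric identity
\[ 1 - \varphi_1(t) = 2\int_0^{+\infty}(1-\cos(tx))f_1(x)\,dx, \]
together with the explicit tail $\bbp(\xi_1 > x) = (1+x/\lambda)^{-\alpha}/2$ and the substitution $y = tx$, leads to
\[ 1 - \varphi_1(t) = \int_0^{+\infty}\sin(y)\bigl(1+y/(t\lambda)\bigr)^{-\alpha}\,dy. \]
The specific choice $\lambda^\alpha = 2c$ of Remark \ref{paretoinfdiv} is precisely calibrated so that the leading term as $t\to 0$ equals $|t|^\alpha$, via the classical identity $\int_0^{+\infty} y^{-\alpha}\sin(y)\,dy = \Gamma(1-\alpha)\cos(\pi\alpha/2)$ (interpreted by analytic continuation for $\alpha\in(1,2)$). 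Splitting the integral at $y = t\lambda$ and controlling $(z+y)^{-\alpha}-y^{-\alpha}$ by elementary mean-value arguments then yields
\[ \varphi_1(t) = 1 - |t|^\alpha + R(t), \qquad |R(t)| \leq C|t|^2, \]
for $|t|$ in a fixed neighborhood of $0$, with $C$ depending only on $\alpha$.

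Next, I would lift this expansion to the scaled sum. For $|t|$ such that $|t|/n^{1/\alpha}$ lies in the above neighborhood, writing $\log \varphi_{X_n}(t) = n \log \varphi_1(t/n^{1/\alpha})$ and expanding the logarithm gives
\[ \bigl| n\log \varphi_1(t/n^{1/\alpha}) + |t|^\alpha \bigr| \leq C\bigl(|t|^2/n^{2/\alpha - 1} + |t|^{2\alpha}/n\bigr), \]
whence, by the elementary bound $|e^a - e^b| \leq |a-b|\,e^{\max(\Re a,\Re b)}$,
\[ |\varphi_{X_n}(t) - \varphi_\infty(t)| \leq C\,e^{-|t|^\alpha}\bigl(|t|^2/n^{2/\alpha-1} + |t|^{2\alpha}/n\bigr) \]
on the full range $|t| \leq T_n := n^{2/\alpha-1}$, since $T_n/n^{1/\alpha} = n^{1/\alpha - 1} \to 0$ for $\alpha > 1$. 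The stretched-exponential factor $e^{-|t|^\alpha}$ absorbs the polynomial growth, all moments $\int_{\bbr} |t|^k e^{-|t|^\alpha}\,dt$ being finite.

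Finally, I would apply Esseen's smoothing inequality with $T = T_n$. The stable density $h$ is bounded (since $\varphi_\infty$ is integrable), so
\[ d_K(X_n, X_\infty) \leq C_1\int_{-T_n}^{T_n} \frac{|\varphi_{X_n}(t)-\varphi_\infty(t)|}{|t|}\,dt + \frac{C_2\|h\|_\infty}{T_n}. \]
The first term is $O(1/n^{2/\alpha-1})$ thanks to the pointwise bound and the finiteness of $\int_{\bbr} |t|^k e^{-|t|^\alpha}\,dt$ (the $|t|^{2\alpha}/n$ piece in fact contributes the smaller $O(1/n)$), while the boundary term equals $\|h\|_\infty/n^{2/\alpha-1}$, yielding the announced rate. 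The main technical hurdle will be making the Taylor-based remainder estimate sufficiently uniform over the full truncation range and combining it with the bound $|\varphi_1(s)| \leq 1 - c|s|^\alpha$ near $0$ and Riemann--Lebesgue decay away from it, in order to ensure that $|\varphi_{X_n}(t)|$ is already super-polynomially small by the time the expansion ceases to be useful.
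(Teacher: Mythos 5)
Your proposal is correct, and it reaches the rate by a route that is genuinely leaner than the paper's, while sharing its two pillars: the local expansion $\varphi_1(s)=1-|s|^\alpha+O(s^2)$ of the Pareto characteristic function (your integration-by-parts/calibration computation of the leading constant is exactly right) and Esseen's smoothing inequality. The paper, by contrast, insists on routing the comparison of $\varphi_n$ and $\varphi_\infty$ through the Stein-type identity \eqref{eq3.7} applied to complex exponentials, as in Proposition \ref{propSD}: it forms $\Delta_n(s)=-i\bigl(\varphi_n'(s)/\varphi_n(s)-\varphi_\infty'(s)/\varphi_\infty(s)\bigr)$, uses the variation-of-constants representation $\varepsilon_n(t)=i\varphi_\infty(t)\int_0^t\frac{\varphi_n(s)}{\varphi_\infty(s)}\Delta_n(s)\,ds$, and therefore needs bounds not only on the remainders $\psi_i$ but also on their derivatives $\psi_i'$, plus a careful control of the ratio $|\varphi_1^{n-1}(u)|/|\varphi_\infty(n^{1/\alpha}u)|$ up to the truncation $T=\varepsilon n^{1/\alpha}$. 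Your direct comparison of $n\log\varphi_1(t/n^{1/\alpha})$ with $-|t|^\alpha$ combined with $|e^a-e^b|\le|a-b|e^{\max(\Re a,\Re b)}$ dispenses with all derivative estimates, and your truncation $T_n=n^{2/\alpha-1}$ is a nice touch: since $T_n/n^{1/\alpha}=n^{1/\alpha-1}\to0$, the argument of $\varphi_1$ never leaves the validity range of the expansion, so the ``main technical hurdle'' you worry about at the end (Riemann--Lebesgue decay away from $0$, super-polynomial smallness of $|\varphi_{X_n}|$) simply does not arise with your choice of $T_n$ — it would only be needed if you truncated at $\varepsilon n^{1/\alpha}$ as the paper does. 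The paper's choice buys a smaller boundary term $O(n^{-1/\alpha})$, which is irrelevant since the integral term dominates; yours makes the boundary term exactly the target rate, which is equally fine. The only imprecision to fix is cosmetic: on $|t|\le T_n$ one has $\Re\bigl(n\log\varphi_1(t/n^{1/\alpha})\bigr)\le-(1-o(1))|t|^\alpha$ (because $|t|^2/n^{2/\alpha-1}\le|t|^\alpha\,n^{(2/\alpha-1)(1-\alpha)}$ and $|t|^{2\alpha}/n\le|t|^\alpha\,n^{1-\alpha}$ there), so the prefactor in your pointwise bound should read $e^{-c|t|^\alpha}$ for some $c\in(0,1)$ (or hold for $n$ large) rather than $e^{-|t|^\alpha}$ exactly; this changes nothing downstream, as $\int_{\bbr}|t|^k e^{-c|t|^\alpha}dt<+\infty$ and the two contributions are $O(n^{-(2/\alpha-1)})$ and $O(n^{-1})$, the first dominating for $\alpha\in(1,2)$.
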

\begin{proof}
Let $\varphi_n$ be the characteristic function of $X_n$, $n\geq 1$.  
Adopting the notation of the proof of Proposition \ref{propSD}, for all $t\geq 0$,
\begin{align*}
\varepsilon_n(t)=i\varphi_\infty(t)\int_0^t \frac{\varphi_n(s)}{\varphi_\infty(s)}\Delta_n(s)ds,
\end{align*}
and similarly for $t\leq 0$. But, thanks to the  identity \eqref{eq3.7} applied to $X_n$ and $X_\infty$ with $f(\cdot)=e^{i s.}$,
\begin{align*}
i\Delta_n(s)=\dfrac{\varphi_n'(s)}{\varphi_n(s)}-\dfrac{\varphi_\infty'(s)}{\varphi_\infty(s)}.
\end{align*}
Moreover, for $s>0$,
\begin{align*}
&\dfrac{\varphi_\infty'(s)}{\varphi_\infty(s)}=-\alpha s^{\alpha-1}.
\end{align*}
By standard computations, for $s>0$,
\begin{align*}
\varphi_1(s)
%&=\alpha\int_0^{+\infty} \cos(\lambda s x)\frac{dx}{(1+x)^{\alpha+1}}\\
&=\alpha \int_1^{+\infty} \cos(\lambda s (y-1))\frac{dy}{y^{\alpha+1}}\\
%&=\cos(\lambda s) \int_1^{+\infty}\cos(\lambda s y)\frac{\alpha dy}{y^{\alpha+1}}+\sin(\lambda s) \int_1^{+\infty}\sin(\lambda s y)\frac{\alpha dy}{y^{\alpha+1}}\\
&=\cos(\lambda s)\left(1-s^\alpha+\int_0^1\frac{1-\cos(\lambda s y)}{y^{\alpha+1}}\alpha dy\right)+\sin(\lambda s)\bigg(\int_1^{+\infty}\dfrac{\sin(\lambda s z)}{z^{\alpha+1}}\alpha dz\bigg)\\
%\dfrac{\lambda \alpha s}{\alpha-1}-s^\alpha\lambda^\alpha \sin(\pi\alpha/2)\dfrac{\Gamma(2-\alpha)}
%{\alpha-1}\nonumber\\
%&\quad\quad+\int_0^1 \frac{s\lambda z-\sin(\lambda s z)}{z^{\alpha+1}}\alpha dz\bigg)\\
&=1-s^\alpha+\psi_1(s)+\psi_2(s)+\psi_3(s),
\end{align*}
where 
\begin{align*}
&\psi_1(s)=\int_0^1\frac{1-\cos(\lambda s y)}{y^{\alpha+1}}\alpha dy,\\
&\psi_2(s)=(\cos(\lambda s)-1)\bigg(\int_1^{+\infty}\dfrac{\cos(\lambda s z)}{z^{\alpha+1}}\alpha dz\bigg),
\end{align*}
and,
\begin{align*}
\psi_3(s)=\sin(\lambda s)\bigg(\int_1^{+\infty}\dfrac{\sin(\lambda s z)}{z^{\alpha+1}}\alpha dz\bigg). 
\end{align*}
%$\psi_1(s)=\int_0^1\frac{1-\cos(\lambda s y)}{y^{\alpha+1}}\alpha dy$, $\psi_2(s)=(\cos(\lambda s)-1)(1-s^\alpha+\psi_1(s))$ and $\psi_3(s)=\sin(\lambda s)((\lambda \alpha s)/(\alpha-1)-s^\alpha\lambda^\alpha \sin(\pi\alpha/2)\Gamma(2-\alpha)/(\alpha-1)+\int_0^1 \frac{s\lambda z-\sin(\lambda s z)}{z^{\alpha+1}}\alpha dz)$. 
Then,
\begin{align*}
\dfrac{\varphi_n'(s)}{\varphi_n(s)}&=n^{1-\frac{1}{\alpha}}\frac{\varphi_1'\left(\frac{s}{n^\frac{1}{\alpha}}\right)}{\varphi_1\left(\frac{s}{n^\frac{1}{\alpha}}\right)}\\
&=\dfrac{-\alpha s^{\alpha-1}+n^{1-\frac{1}{\alpha}}\psi_1'\left(\frac{s}{n^{\frac{1}{\alpha}}}\right)+n^{1-\frac{1}{\alpha}}\psi_2'\left(\frac{s}{n^{\frac{1}{\alpha}}}\right)+n^{1-\frac{1}{\alpha}}\psi_3'\left(\frac{s}{n^{\frac{1}{\alpha}}}\right)}{1-\frac{s^\alpha}{n}+\psi_1\left(\frac{s}{n^{\frac{1}{\alpha}}}\right)+\psi_2\left(\frac{s}{n^{\frac{1}{\alpha}}}\right)+\psi_3\left(\frac{s}{n^{\frac{1}{\alpha}}}\right)}.
\end{align*}
implying that,
\begin{align*}
\Delta_n(s)&=\dfrac{-\alpha s^{\alpha-1}+n^{1-\frac{1}{\alpha}}\psi_1'\left(\frac{s}{n^{\frac{1}{\alpha}}}\right)+n^{1-\frac{1}{\alpha}}\psi_2'\left(\frac{s}{n^{\frac{1}{\alpha}}}\right)+n^{1-\frac{1}{\alpha}}\psi_3'\left(\frac{s}{n^{\frac{1}{\alpha}}}\right)}{1-\frac{s^\alpha}{n}+\psi_1\left(\frac{s}{n^{\frac{1}{\alpha}}}\right)+\psi_2\left(\frac{s}{n^{\frac{1}{\alpha}}}\right)+\psi_3\left(\frac{s}{n^{\frac{1}{\alpha}}}\right)}+\alpha s^{\alpha-1}\\
&=\dfrac{n^{1-\frac{1}{\alpha}}\psi_1'\left(\frac{s}{n^{\frac{1}{\alpha}}}\right)+n^{1-\frac{1}{\alpha}}\psi_2'\left(\frac{s}{n^{\frac{1}{\alpha}}}\right)+n^{1-\frac{1}{\alpha}}\psi_3'\left(\frac{s}{n^{\frac{1}{\alpha}}}\right)-\alpha \frac{s^{2\alpha-1}}{n}+\alpha s^{\alpha-1}(\psi_1\left(\frac{s}{n^{\frac{1}{\alpha}}}\right)+\psi_2\left(\frac{s}{n^{\frac{1}{\alpha}}}\right)+\psi_3\left(\frac{s}{n^{\frac{1}{\alpha}}}\right))}{1-\frac{s^\alpha}{n}+\psi_1\left(\frac{s}{n^{\frac{1}{\alpha}}}\right)+\psi_2\left(\frac{s}{n^{\frac{1}{\alpha}}}\right)+\psi_3\left(\frac{s}{n^{\frac{1}{\alpha}}}\right)}.
\end{align*}
Before, bounding the quantity $\varepsilon_n$ let us provide bounds for the functions $\psi_1$, $\psi_2$ and $\psi_3$ and their derivatives. For $s>0$,
\begin{align}\label{ineq:Psi123}
|\psi_1(s)|\leq C_1 s^2, \quad |\psi_2(s)|\leq C_2 s^2,\quad |\psi_3(s)|\leq C_3 s^2, 
\end{align}
and,
\begin{align*}
|\psi'_1(s)|\leq C_4 s,\quad |\psi'_2(s)|\leq C_5 (s+s^2), \quad |\psi'_3(s)|\leq C_6 s, 
\end{align*}
for some strictly positive constants, $C_i$, $i=1,...,6$, depending only on $\alpha$.  Therefore, for $t>0$,
\begin{align*}
|\varepsilon_n(t)|&\leq |\varphi_\infty(t)|\int_0^t \frac{\left|\varphi^{n-1}_1\left(\frac{s}{n^{\frac{1}{\alpha}}}\right)\right|}{|\varphi_\infty(s)|}\bigg| n^{1-\frac{1}{\alpha}}\psi_1'\left(\frac{s}{n^{\frac{1}{\alpha}}}\right)+n^{1-\frac{1}{\alpha}}\psi_2'\left(\frac{s}{n^{\frac{1}{\alpha}}}\right)+n^{1-\frac{1}{\alpha}}\psi_3'\left(\frac{s}{n^{\frac{1}{\alpha}}}\right)-\alpha \frac{s^{2\alpha-1}}{n}\\
&\quad\quad+\alpha s^{\alpha-1}\left(\psi_1\left(\frac{s}{n^{\frac{1}{\alpha}}}\right)+\psi_2\left(\frac{s}{n^{\frac{1}{\alpha}}}\right)+\psi_3\left(\frac{s}{n^{\frac{1}{\alpha}}}\right)\right)\bigg|ds\\
&\leq C|\varphi_\infty(t)|\int_0^t \frac{\left|\varphi^{n-1}_1\left(\frac{s}{n^{\frac{1}{\alpha}}}\right)\right|}{|\varphi_\infty(s)|}\left(\frac{s}{n^{\frac{2}{\alpha}-1}}+\frac{s^2}{n^{\frac{3}{\alpha}-1}}+\frac{s^{2\alpha-1}}{n}+\frac{s^{\alpha+1}}{n^{\frac{2}{\alpha}}}\right)ds, 
\end{align*}
and so
\begin{align*}
|\varepsilon_n(n^{\frac{1}{\alpha}}t)|\leq Cn|\varphi_\infty(n^{\frac{1}{\alpha}}t)|\int_0^{t} \frac{|\varphi^{n-1}_1(u)|}{|\varphi_\infty(n^{\frac{1}{\alpha}}u)|}\left(u+u^2+u^{2\alpha-1}+u^{\alpha+1}\right)du.
\end{align*}
Let us now detail how to bound the ratio $|\varphi^{n-1}_1(u)|/|\varphi_\infty\left(n^{\frac{1}{\alpha}}u\right)|$. 
For $0<u\leq t$
\begin{align*}
\frac{|\varphi^{n-1}_1(u)|}{|\varphi_\infty\left(n^{\frac{1}{\alpha}}u\right)|}&\leq e^{nu^\alpha+(n-1)\ln \varphi_1(u)}\\
&\leq e^{nu^\alpha+(n-1)(-u^\alpha+\psi_1(u)+\psi_2(u)+\psi_3(u))}\\
%&\leq e^{u^\alpha-\psi_1(u)-\psi_2(u)-\psi_3(u)+n(\psi_1(u)+\psi_2(u)+\psi_3(u))}\\
&\leq ee^{n(\psi_1(u)+\psi_2(u)+\psi_3(u))}. 
\end{align*}
By \eqref{ineq:Psi123}, we can choose $\varepsilon\in (0,1)$ such that 
$0<C(\varepsilon)=\underset{u\in (0,\varepsilon)}{\max} (|\psi_1(u)|+|\psi_2(u)|+|\psi_3(u)|)/u^\alpha<1$, 
since $\alpha\in (1,2)$. Then, for $0<u\leq t\leq \varepsilon$,
\begin{align*}
\frac{|\varphi^{n-1}_1(u)|}{|\varphi_\infty\left(n^{\frac{1}{\alpha}}u\right)|}&\leq e e^{n C(\varepsilon) u^{\alpha}}, 
\end{align*}
which implies that, for $0<t\leq \varepsilon <1$,
\begin{align*}
|\varepsilon_n(n^{\frac{1}{\alpha}}t)|\leq Cne^{-n(1-C(\varepsilon))t^{\alpha}}\left(t^2+t^3+t^{2\alpha}+t^{\alpha+2}\right).  
\end{align*}
A similar bound can also be obtained for $-\varepsilon \leq t< 0$. 
Setting $T:=\varepsilon n^{{1}/{\alpha}}$, applying Esseen's inequality, and if $h_\alpha$ denotes the density of the 
$S\alpha S$-law, we finally get
\begin{align*}
d_K(X_n,X_\infty)&\leq C'_1\int_{-\varepsilon n^{\frac{1}{\alpha}}}^{+\varepsilon n^{\frac{1}{\alpha}}}\dfrac{|\varepsilon_n(t)|}{|t|}dt+C_2\dfrac{\|h_\alpha\|_\infty}{\varepsilon n^{\frac{1}{\alpha}}}\\
&\leq C'_1\int_{-\varepsilon}^{+\varepsilon}\dfrac{|\varepsilon_n(n^{\frac{1}{\alpha}}t)|}{|t|}dt+C_2\dfrac{\|h_\alpha\|_\infty}{\varepsilon n^{\frac{1}{\alpha}}}\\
&\leq C'_1\int_{0}^{+\varepsilon}Cne^{-n(1-C(\varepsilon))t^{\alpha}}\left(t+t^2+t^{2\alpha-1}+t^{\alpha+1}\right)dt+C_2\dfrac{\|h_\alpha\|_\infty}{\varepsilon n^{\frac{1}{\alpha}}}\\
&\leq C'_1\int_{0}^{n^{\frac{1}{\alpha}}\varepsilon}e^{-(1-C(\varepsilon))t^{\alpha}}\left(\frac{t}{n^{\frac{2}{\alpha}-1}}+\frac{t^2}{n^{\frac{3}{\alpha}-1}}+\frac{t^{2\alpha-1}}{n}+\frac{t^{\alpha+1}}{n^{\frac{2}{\alpha}}}\right)dt+C_2\dfrac{\|h_\alpha\|_\infty}{\varepsilon n^{\frac{1}{\alpha}}}\\
&\leq C_{\varepsilon,\alpha}\left(\frac{1}{n^{\frac{2}{\alpha}-1}}+\frac{1}{n^{\frac{3}{\alpha}-1}}+\frac{1}{n}+\frac{1}{n^{\frac{2}{\alpha}}}\right)+C_2\dfrac{\|h_\alpha\|_\infty}{\varepsilon n^{\frac{1}{\alpha}}}, 
\end{align*}
for some $C_{\varepsilon,\alpha}>0$ depending only on $\varepsilon$ and on $\alpha$. 
This concludes the proof of the proposition.

%&\leq C \dfrac{t}{1+t^\alpha}\left(\frac{t}{n^{\frac{2}{\alpha}-1}}+\frac{t^2}{n^{\frac{3}{\alpha}-1}}+\frac{t^{2\alpha-1}}{n}+\frac{t^{\alpha+1}}{n^{\frac{2}{\alpha}}}\right)
%\end{align}
%and similarly for $t\leq 0$. Thus, for any $T>0$,
%\begin{align}
%d_K(X_n,X_\infty)&\leq C'_1\int_{-T}^{+T}\dfrac{|\varepsilon_n(t)|}{|t|}dt+C_2\dfrac{\|h_\alpha\|_\infty}{T}\\
%&\leq C'_1 \left(\frac{T^{\frac{2}{\alpha+1}}}{n^{\frac{2}{\alpha}-1}}+\frac{T^{3-\alpha}}{n^{\frac{3}{\alpha}-1}}+\frac{T^\alpha}{n}+\frac{T^2}{n^{\frac{2}{\alpha}}}\right)+C_2\dfrac{\|h_\alpha\|_\infty}{T}
%\end{align}
%Setting $T=n^{\frac{\alpha+1}{\alpha+3}(\frac{2}{\alpha}-1)}$ concludes the proof of the proposition.
\end{proof}

\begin{rem}\label{rem:DNA}
The above result has to be compared with the ones available in the literature but for other types of Pareto laws. 
Very recently, and via Stein's method, a rate of convergence in 1-Wasserstein and for stable limiting laws is obtained in \cite{X17}. When specialized  to the Pareto law with density $f(x):=\alpha\lambda^{\alpha}/(2|x|^{\alpha+1})\bbone_{|x|>\lambda}$, described in the previous remark, 
this rate is of order $n^{-(2/\alpha-1)}$ (see \cite{X17}), 
which via the inequality \eqref{ineq:KW1} provides a rate of the order $n^{-(1/\alpha-1/2)}$ in Kolmogorov distance.  
%Note that $(\alpha+1)({2}/{\alpha}-1)/(\alpha + 3)>1/\alpha -1/2$, for $1<\alpha<2$. 
Moreover, a convergence rate of order $n^{-(2/\alpha-1)}$ in Kolmogorov distance is known to hold for the same Pareto law (see e.g., \cite{CW93} 
and references therein). The results of \cite{Hall81} also imply a rate of convergence of order $n^{-(2/\alpha-1)}$ in Kolmogorov distance for the Pareto law considered in Proposition \ref{prop:DNA} (see Corollary $1$ in \cite{Hall81}). At a different level, the rate $n^{-(2/\alpha-1)}$ also appears 
when one considers the convergence, in supremum norm, of the 
corresponding densities towards the stable density, see \cite{KK00}.  
\end{rem}

\begin{rem}\label{rem:DNAbis}
Analyzing the proof of Proposition~\ref{prop:DNA}, it is clearly possible to generalize the previous result,  
beyond the Pareto case, to more general distributions pertaining to the domain of normal attraction 
of the symmetric $\alpha$--stable distribution. 
Indeed, consider distribution functions of the form
\begin{align*}
&\forall x>0,\ F(x)=1-\dfrac{(c+a(x))}{x^{\alpha}},\\
&\forall x<0,\ F(x)=\dfrac{(c+a(-x))}{(-x)^{\alpha}},
\end{align*}
where the function $a$ is defined on $(0, +\infty)$ and such that $\underset{x\rightarrow+\infty}{\lim}a(x)=0$.   Moreover, 
let $a$ be bounded and continuous on $(0, +\infty)$ and such that $\underset{x\rightarrow+\infty}{\lim}xa(x)<+\infty$. 
Then, by straightforward computations
\begin{align*}
\int_{-\infty}^{+\infty}e^{it x}dF(x)=1-t^\alpha+\psi(t), 
\end{align*}
where $\psi$ is a real-valued function satisfying $|\psi(t)|\leq C_1|t|^2$ and 
$|\psi^\prime(t)|\leq C_2|t|$, for some $C_1>0$ and $C_2>0$, two constants depending only on $\alpha$ and $a$. Assuming as well that the probability measure associated with the distribution function $F$ is infinitely divisible, it follows that 
\begin{align*}
d_K(X_n,X_\infty)\leq \frac{C}{n^{\frac{2}{\alpha}-1}}, 
\end{align*}
where $X_n=\sum_{i=1}^n\xi_i/n^{\frac{1}{\alpha}}$, with $\xi_i$ iid random variables such that 
$\xi_1\sim F$, where $X_\infty\sim S\alpha S$ and where $C>0$ only depends on $\alpha$ and $a$.
\end{rem}

A further simple adaptation of the proofs of the previous results leads to explicit rates of 
convergence for the compound Poisson approximation of some classes of infinitely divisible distributions. 
First, the next result gives a Berry-Esseen type bound.

\begin{thm}\label{thm4.3}
Let $X\sim ID(b,0,\nu)$ nondegenerate be such that $\bbe |X|<\infty$ and such that
\begin{align}\label{HP:Reg}
\int_{-1}^{+1}|u|\nu(du)<\infty.  
\end{align}
Let $\varphi$, its characteristic function, be such that, for all $t\in\mathbb{R}$,
\begin{align}\label{HP:PolynBound2}
|\varphi(t)|\int_0^{|t|}\frac{ds}{|\varphi(s)|}\leq C |t|^{p},
\end{align}
where $C>0$ and $p\geq 1$, and let the law of $X$ be absolutely continuous with 
respect to the Lebesgue measure and have a bounded density. Let $X_n$, $n\geq 1$, be compound Poisson random variables each with characteristic function,
\begin{align}\label{CPA}
\varphi_n(t):=\exp\left(n\left((\varphi(t))^{\frac{1}{n}}-1\right)\right).
\end{align}
Then,
\begin{align}
d_K(X_n,X)
\leq C'\left(\frac{1}{n}\right)^{\frac{1}{p+2}}\left(|b_0|+\int_{-\infty}^{+\infty}|u|\nu(du)\right)^{\frac{2}{p+2}}.
\end{align}
\end{thm}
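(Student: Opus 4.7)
The plan is to follow the Fourier/Stein--Tikhomirov machinery deployed in the proofs of Theorem~\ref{thm4.1} and Proposition~\ref{propSD}, but now comparing the characteristic function $\varphi$ of $X$ with the compound Poisson approximant $\varphi_n$ in \eqref{CPA}. The key is to identify $X_n$ as a $CP(n,\mu_{1/n})$ random variable, where $\mu_{1/n}$ is the probability measure with characteristic function $\psi_n(t)=\varphi(t)^{1/n}$ (defined via the distinguished logarithm since $\varphi$ never vanishes). The characterizing identity \eqref{cp}, applied to $f(\cdot)=e^{it\cdot}$, then yields
\begin{align*}
\frac{\varphi_n'(t)}{\varphi_n(t)}=in\int ue^{itu}\mu_{1/n}(du),
\end{align*}
while for $X$, the representation \eqref{eq2.2} (valid under \eqref{HP:Reg}) combined with \eqref{eq3.9} or direct differentiation gives
\begin{align*}
\frac{\varphi'(t)}{\varphi(t)}=i\Big(b_0+\int ue^{itu}\nu(du)\Big)=:R(t).
\end{align*}

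Next, I would observe the crucial algebraic identity that makes the proof so clean: differentiating $\psi_n(t)=\varphi(t)^{1/n}$ and comparing with the compound Poisson formula above,
\begin{align*}
in\int ue^{itu}\mu_{1/n}(du)=\psi_n(t)R(t).
\end{align*}
Writing $\varepsilon_n(t)=\varphi_n(t)-\varphi(t)$, the two logarithmic derivatives differ by $\Delta_n(t):=(\psi_n(t)-1)R(t)$, and standard ODE integration (as in the proofs of Theorem \ref{thm4.1} and Proposition \ref{propSD}) gives, for $t\ge 0$,
\begin{align*}
\varepsilon_n(t)=\varphi(t)\int_0^t\frac{\varphi_n(s)}{\varphi(s)}\Delta_n(s)\,ds,
\end{align*}
with an analogous expression for $t\le 0$.

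The remaining step is to bound $|\Delta_n(s)|$. Writing $\varphi=e^{L}$ with $L(t)=itb_0+\int(e^{itu}-1)\nu(du)$ and noting that $\mathrm{Re}(L(t)/n)\le 0$, the elementary inequality $|e^z-1|\le|z|$ for $\mathrm{Re}(z)\le 0$ yields
\begin{align*}
|\psi_n(t)-1|\le \frac{|L(t)|}{n}\le \frac{|t|}{n}\Big(|b_0|+\int_{-\infty}^{+\infty}|u|\nu(du)\Big),
\end{align*}
and clearly $|R(t)|\le |b_0|+\int|u|\nu(du)$. (Both integrals are finite: the near-zero part by \eqref{HP:Reg}, the tail part by $\bbe|X|<\infty$.) Since $|\varphi_n(s)|\le 1$, combining with the polynomial bound \eqref{HP:PolynBound2} gives
\begin{align*}
|\varepsilon_n(t)|\le \frac{C_\ast^2}{n}\,|\varphi(t)|\int_0^{|t|}\frac{|s|\,ds}{|\varphi(s)|}\le \frac{CC_\ast^2}{n}|t|^{p+1},\qquad C_\ast=|b_0|+\int|u|\nu(du).
\end{align*}

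Finally, I would plug this pointwise estimate into Esseen's inequality exactly as at the end of the proof of Theorem~\ref{thm4.1}: for any $T>0$,
\begin{align*}
d_K(X_n,X)\le C_1\int_{-T}^T\frac{|\varepsilon_n(t)|}{|t|}dt+C_2\frac{\|h\|_\infty}{T}\le C_1'\frac{C_\ast^2 T^{p+1}}{n}+\frac{C_2'}{T},
\end{align*}
and optimizing over $T$ by choosing $T=(n/C_\ast^2)^{1/(p+2)}$ yields the claimed bound with exponent $1/(p+2)$ on $1/n$ and exponent $2/(p+2)$ on $C_\ast$. The only nontrivial step is really the identification $in\int ue^{itu}\mu_{1/n}(du)=\psi_n(t)R(t)$, which reduces the entire discrepancy $\Delta_n$ to the single factor $\psi_n(t)-1$ — everything else is routine bookkeeping parallel to the earlier theorems.
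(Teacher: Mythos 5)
Your proposal is correct and follows essentially the same route as the paper: the paper also reduces everything to the identity that the difference of the logarithmic derivatives of $\varphi_n$ and $\varphi$ equals $S(t)\bigl((\varphi(t))^{1/n}-1\bigr)$ (your $R(t)(\psi_n(t)-1)$), bounds $|S|$ and $|(\varphi)^{1/n}-1|$ by $|b_0|+\int|u|\nu(du)$ and $\tfrac{|t|}{n}\bigl(|b_0|+\int|u|\nu(du)\bigr)$ respectively, and then applies Esseen's inequality with the same choice of $T$. Your derivation via the compound Poisson characterizing identity and the inequality $|e^z-1|\le|z|$ for $\mathrm{Re}\,z\le 0$ merely makes explicit the ``straightforward computations'' the paper invokes, so there is no substantive difference.
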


\begin{proof}
Clearly, $\varphi_n(t)\rightarrow \varphi(t)$, for all $t\in \mathbb{R}$, and 
so the sequence $(X_n)_{n\geq 1}$ converges in distribution towards $X$. 
Then, adopting the notations of the proof of Theorem~\ref{thm4.1},
\begin{align}
\frac{1}{i}\dfrac{d}{dt}\big(\varphi_n(t)\big)=\varphi_n(t)S(t)+\varphi_n(t)\Delta_n^+(t)-\varphi_n(t)\Delta_n^-(t)\nonumber.
\end{align}
Moreover, thanks to (\ref{CPA}),
\begin{align*}
\dfrac{d}{dt}\big(\varphi_n(t)\big)&=\frac{\varphi'(t)}{\varphi(t)}\big(\varphi(t)\big)^{\frac{1}{n}}\varphi_n(t)
=iS(t)\big(\varphi(t)\big)^{\frac{1}{n}}\varphi_n(t),
\end{align*}
since
\begin{align*}
S(t)=\frac{1}{i\varphi(t)}\frac{d}{dt}(\varphi(t)).
\end{align*}
Thus,
\begin{align*}
\Delta_n^+(t)-\Delta_n^-(t)=S(t)\left(\big(\varphi(t)\big)^{\frac{1}{n}}-1\right),
\end{align*}
which implies that:
\begin{align}
\varepsilon_n(t)=i\varphi(t)\int_0^t \frac{\varphi_n(s)}{\varphi(s)}S(s)\left(\big(\varphi(s)\big)^{\frac{1}{n}}-1\right)ds.
\end{align}
Therefore,
\begin{align}\label{ineq:Eps}
|\varepsilon_n(t)|\leq |\varphi(t)|\int_0^t\frac{1}{|\varphi(s)|}|S(s)|\big(\varphi(s)\big)^{\frac{1}{n}}-1|ds.
\end{align}
Next, by the very definition of $S$, Corollary \ref{cor:ExtSizeBias}, and straightforward computations,
\begin{align*}
S(s)&=m_0^+\varphi_{Y^+}(s)-m_0^-\varphi_{Y^-}(s)\\
&=\left(b_0^++\int_{-\infty}^{+\infty} e^{i s u}\tilde{\nu}_{+}(du)\right)
-\left(b_0^-+\int_{-\infty}^{+\infty} e^{i s u}\tilde{\nu}_{-}(du)\right)\\
&=b_0+\int_{-\infty}^{+\infty}e^{i s u}\tilde{\nu}(du).
\end{align*}
Thus,
\begin{align}\label{ineq:S}
|S(s)|\leq |b_0|+\int_{-\infty}^{+\infty}|u|\nu(du).
\end{align}
Moreover, further straightforward computations lead to:
\begin{align}\label{ineq:Ch}
|\big(\varphi(s)\big)^{\frac{1}{n}}-1|\leq \frac{|s|}{n}\left(|b_0|+\int_{-\infty}^{+\infty}|u|\nu(du)\right).
\end{align}
Combining (\ref{HP:PolynBound2}) and (\ref{ineq:Eps})--(\ref{ineq:Ch}) gives
\begin{align}\label{ineq:pointdiffcharPC}
|\varepsilon_n(t)|\leq C \frac{1}{n}\left(|b_0|+\int_{-\infty}^{+\infty}|u|\nu(du)\right)^2 |t|^{p+1}.
\end{align}
To conclude the proof of this theorem, proceed as in the end of the proof of Theorem~\ref{thm4.1}.
\end{proof}

\begin{prop}\label{prop4.2}
Let $X\sim ID(b,0,\nu)$ nondegenerate be such that $\bbe |X|^2<\infty$.
%\begin{align}\label{HP:Reg}
%\int_{|u|\leq 1}|u|\nu(du)<\infty.  
%\end{align}
Let $\varphi$, its characteristic function, be such that, for all $t\in\mathbb{R}$,
\begin{align}\label{HP:PolynBound3}
|\varphi(t)|\int_0^{|t|}\frac{ds}{|\varphi(s)|}\leq C |t|^{p},
\end{align}
where $C>0$ and $p\geq 1$. Further, let the law of $X$ be absolutely continuous with respect 
to the Lebesgue measure and have a bounded density. Let $X_n$, $n\geq 1$, be compound Poisson random variables each with characteristic function,
\begin{align}\label{CPA2}
\varphi_n(t):=\exp\left(n\left((\varphi(t))^{\frac{1}{n}}-1\right)\right).
\end{align}
Then,
\begin{align}
d_K(X_n,X)
\leq C'\left(\frac{1}{n}\right)^{\frac{1}{p+4}}\left(|\bbe X|+\int_{-\infty}^{+\infty}u^2\nu(du)\right)^{\frac{2}{p+4}}.
\end{align}
\end{prop}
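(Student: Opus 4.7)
The plan is to adapt the proof of Theorem~\ref{thm4.3} by substituting Proposition~\ref{local} (zero-bias) for Corollary~\ref{cor:ExtSizeBias} (size-bias). First I would set $\eta:=\int u^2\nu(du)$ and let $Y$ denote the extended zero-bias random variable of Proposition~\ref{local}, so that inserting $f(\cdot)=e^{it\cdot}$ into \eqref{eq:ExtZeroBias} yields $\varphi'(t)=R(t)\varphi(t)$ with $R(t):=i\bbe X-t\eta\varphi_Y(t)$. A direct differentiation of \eqref{CPA2} gives $\varphi_n'(t)=R(t)(\varphi(t))^{1/n}\varphi_n(t)$, and subtracting these two identities produces $\varepsilon_n'(t)=R(t)\varepsilon_n(t)+R(t)[(\varphi(t))^{1/n}-1]\varphi_n(t)$ for $\varepsilon_n:=\varphi_n-\varphi$. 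Solving this linear ODE exactly as in the proof of Theorem~\ref{thm4.3}, and using that the characteristic function of an ID law never vanishes, I obtain the integral representation
\begin{align*}
\varepsilon_n(t)=\varphi(t)\int_0^t\frac{\varphi_n(s)}{\varphi(s)}\,R(s)\bigl((\varphi(s))^{1/n}-1\bigr)\,ds,
\end{align*}
valid for $t\ge 0$, with the analogous formula for $t\le 0$.

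The second step is to bound the two factors in the integrand. The bound $|R(s)|\le |\bbe X|+|s|\eta$ is immediate since $|\varphi_Y|\le 1$. Controlling $|(\varphi(s))^{1/n}-1|$ is the main subtlety and is where the argument genuinely departs from Theorem~\ref{thm4.3}: lacking the drift-type assumption \eqref{HP:Reg}, I would instead use the center representation $\log\varphi(s)=is\bbe X+\int_{-\infty}^{+\infty}(e^{isu}-1-isu)\nu(du)$, which is available here because $\bbe|X|^2<+\infty$ forces $\int_{|u|>1}|u|\nu(du)<+\infty$, and apply the Taylor bound $|e^{isu}-1-isu|\le s^2u^2/2$ to obtain $|\log\varphi(s)|\le |s||\bbe X|+s^2\eta/2$. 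Combining this with the elementary inequality $|e^w-1|\le |w|$, valid whenever $\Re(w)\le 0$ (applied to $w=\log\varphi(s)/n$, whose real part is nonpositive since $|\varphi(s)|\le 1$), yields the desired quadratic-in-$s$ control
\begin{align*}
|(\varphi(s))^{1/n}-1|\le \frac{1}{n}\Bigl(|s||\bbe X|+\frac{s^2\eta}{2}\Bigr).
\end{align*}

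With both factors in hand, plugging into the integral representation, using $|\varphi_n|\le 1$ and the hypothesis \eqref{HP:PolynBound3}, and bounding $|s|+|s|^3\le |t|+|t|^3$ for $|s|\le |t|$ inside the integral, I expect to obtain the pointwise estimate
\begin{align*}
|\varepsilon_n(t)|\le \frac{C\,(|\bbe X|+\eta)^2}{n}\bigl(|t|^{p+1}+|t|^{p+3}\bigr),
\end{align*}
for some absolute constant $C>0$. Finally, applying Esseen's inequality \eqref{ineq:Ess} and optimizing in $T$ via the choice $T=(n/(|\bbe X|+\eta)^2)^{1/(p+4)}$ delivers the announced bound. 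The main obstacle is precisely the quadratic-in-$s$ control of $|(\varphi(s))^{1/n}-1|$ without the drift-type integrability assumption used in Theorem~\ref{thm4.3}; the gain of one extra power of $|s|$ relative to that proof is exactly what upgrades the final exponent from $1/(p+2)$ to $1/(p+4)$.
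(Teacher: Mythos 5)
Your proposal is correct and follows essentially the same route as the paper: the same ODE/integral representation for $\varepsilon_n$, the same bound $|R(s)|\le|\bbe X|+|s|\eta$, a quadratic-in-$s$ bound on $|(\varphi(s))^{1/n}-1|$, then Esseen's inequality with the optimization $T\asymp (n/(|\bbe X|+\eta)^2)^{1/(p+4)}$. The only (harmless) difference is how you get the bound on $|(\varphi(s))^{1/n}-1|$ — via the distinguished logarithm and $|e^w-1|\le|w|$ for $\Re(w)\le 0$ — whereas the paper obtains the same estimate by the "straightforward computations" implicit in integrating $\frac{d}{ds}(\varphi(s))^{1/n}=\frac{1}{n}R(s)(\varphi(s))^{1/n}$.
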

\begin{proof}
The proof is similar to the proof of Theorem \ref{thm4.3} and so it is only sketched. 
Clearly, $\varphi_n(t)\rightarrow \varphi(t)$, for all $t\in \mathbb{R}$ and so 
the sequence $(X_n)_{n\geq 1}$ converges in distribution towards $X$. 
Then, with the previous notations,
\begin{align}
\dfrac{d}{dt}\big(\varphi_n(t)\big)=R(t)\varphi_n(t)+\Delta_n(t)\varphi_n(t)\nonumber.
\end{align}
Moreover, thanks to \eqref{CPA2},
\begin{align*}
\dfrac{d}{dt}\big(\varphi_n(t)\big)&=\frac{\varphi'(t)}{\varphi(t)}\big(\varphi(t)\big)^{\frac{1}{n}}\varphi_n(t)
=R(t)\big(\varphi(t)\big)^{\frac{1}{n}}\varphi_n(t),
\end{align*}
since
\begin{align*}
R(t)=\frac{1}{\varphi(t)}\frac{d}{dt}(\varphi(t)).
\end{align*}
Thus,
\begin{align*}
\Delta_n(t)=R(t)\left(\big(\varphi(t)\big)^{\frac{1}{n}}-1\right),
\end{align*}
which implies that:
\begin{align*}
\varepsilon_n(t)=\varphi(t)\int_0^t \frac{\varphi_n(s)}{\varphi(s)}R(s)\left(\big(\varphi(s)\big)^{\frac{1}{n}}-1\right)ds.
\end{align*}
Therefore,
\begin{align}\label{ineq:Eps2}
|\varepsilon_n(t)|\leq |\varphi(t)|\int_0^t\frac{1}{|\varphi(s)|}|R(s)|\big(\varphi(s)\big)^{\frac{1}{n}}-1|ds.
\end{align}
By the very definition of $R$, Proposition \ref{local}, and straightforward computations, we have
\begin{align*}
R(s)&=-s\left(\int_{-\infty}^{+\infty} u^2\nu(du)\right) \varphi_Y(s)+i\bbe X\\
&=i\int_{-\infty}^{+\infty}\left(e^{isu}-1\right)u\nu(du)+i\bbe X.\\
\end{align*}
Thus,
\begin{align}\label{ineq:R}
|R(s)|\leq |\bbe X|+ |s|\int_{-\infty}^{+\infty}u^2\nu(du).
\end{align}
Moreover, further straightforward computations lead to:
\begin{align}\label{ineq:Ch2}
|\big(\varphi(s)\big)^{\frac{1}{n}}-1|\leq \frac{|s|}{n}\left(|\bbe X|+ |s|\int_{-\infty}^{+\infty}v^2\nu(dv)\right).
\end{align}
Combining \eqref{HP:PolynBound3} and \eqref{ineq:Eps2}--\eqref{ineq:Ch2} gives
\begin{align*}
|\varepsilon_n(t)|&\leq C \frac{1}{n}\left(|\bbe X|+ |t|\int_{-\infty}^{+\infty}u^2\nu(du)\right)^2 |t|^{p+1},\\
&\leq C \frac{1}{n}\left(|\bbe X|+ \int_{-\infty}^{+\infty}u^2\nu(du)\right)^2(|t|^{p+1}+|t|^{p+2}+|t|^{p+3}).
\end{align*}
To conclude the proof of this proposition, proceed as in the end of the proof of Proposition~\ref{thm4.3}.
\end{proof}
\noindent

\begin{rem}\label{rem:CPA}
(i) Under the condition
\begin{align}
A:=\underset{s\in\mathbb{R}}\sup\,\left|\int_{-\infty}^{+\infty}(e^{isu}-1)u\nu(du)\right|<\infty,
\end{align}
the upper bound on the Kolmogorov distance in Proposition \ref{prop4.2} becomes 
\begin{align}
d_K(X_n,X)
\leq C'\left(\frac{1}{n}\right)^{\frac{1}{p+2}}\left(|\bbe X|+A\right)^{\frac{2}{p+2}},
\end{align}
which is comparable to the one obtained in Theorem \ref{thm4.3}, and is, for instance, verified in case the L\'evy measure of X satisfies the assumptions of Theorem \ref{thm4.3}.

(ii) Once more, versions of Theorem \ref{thm4.3} and of Proposition~\ref{prop4.2} can be derived for the 
smooth Wasserstein distance. 
Let us develop this point a bit more:  assume 
that $X$ has finite exponential moment, namely, that $\bbe e^{\lambda |X|}$ is finite for some $\lambda>0$. 
This condition implies that the characteristic function $\varphi$ is analytic in a horizontal strip of the complex 
plane containing 
the real axis.  Then, by the very definition of $\varphi_n$ and the use of the  L\'evy-Raikov Theorem (see e.g. Theorem $10.1.1$ in \cite{Lu}), it follows 
that $\varphi_n$ is analytic at least in the same horizontal strip. Moreover, in this strip, still by its very definition $(\varphi_n)_{n\geq 1}$ converges pointwise towards $\varphi$.  Then, $e^{\eta |X_n|}$ (for some $\eta>0$) are uniformly integrable.
Therefore, if $\bbe e^{\lambda |X|}$ is finite and if the assumptions \eqref{HP:Reg} and \eqref{HP:PolynBound2} hold true,  \eqref{ineq:pointdiffcharPC} and  \cite[Theorem 1]{AMPS17} lead to,
\begin{align}
d_{W_{p+2}}(X_n,X)\leq C \frac{\sqrt{\ln n}}{n}\left(|b_0|+\int_{-\infty}^{+\infty}|u|\nu(du)\right)^2,
\end{align}
for some constant $C$ only depending on the limiting distribution.
\end{rem}
\noindent
We now present some examples illustrating the applicability of our methods.

(i)  Let  $X$ be a compound Poisson $X\sim {\rm CP}(\nu(\mathbb{R}),\nu_0)$, then for all $t\in \mathbb{R}$,  
\begin{align*}
L\varphi(t)\leq e^{\nu(\mathbb{R})\int_{-\infty}^{+\infty}(\cos(ut)-1)\nu_0(du)}
\int_0^{|t|}e^{\nu(\mathbb{R})\int_{-\infty}^{+\infty}(1-\cos(us))\nu_0(du)}ds
\leq e^{2\nu(\mathbb{R})} |t|.  
\end{align*}

(ii) The gamma random variable with parameters $\alpha\geq 1$ and $\beta>0$ satisfies the 
assumptions of Theorem \ref{thm4.3}. Indeed, \eqref{HP:Reg} and the boundedness of the 
density are automatically verified, and moreover 
\begin{align*}
|\varphi(t)|\int_0^{|t|}\frac{ds}{|\varphi(s)|}\leq |t|,
\end{align*}
for all $t\in \mathbb{R}$.

(iii) Let $q\geq 3$ and let $(\lambda_1,...,\lambda_q)$ be $q$ non-zero distinct reals. 
Let $X:= \sum_{k=1}^q\lambda_k (Z_k^2-1)$, where the $\{Z_i,\ i=1,...,q\}$ are iid standard 
normal random variables. Clearly $X$ is infinitely divisible and its L\'evy measure is given by 
\begin{align*}
\frac{\nu(du)}{du}:=\left(\sum_{\lambda\in \Lambda_+}\dfrac{e^{-u/(2\lambda)}}{2u}\right)\bbone_{(0,+\infty)}(u)
+\left(\sum_{\lambda\in \Lambda_-}\dfrac{e^{-u/(2\lambda)}}{2(-u)}\right)\bbone_{(-\infty,0)}(u), 
\end{align*}
where $\Lambda_+=\{\lambda_k: \lambda_k> 0\}$ and $\Lambda_-=\{\lambda_k: \lambda_k< 0\}$ have finite 
cardinality, and so the condition \eqref{HP:Reg} is verified. 
Moreover $\varphi(t):=\prod_{j=1}^q e^{-it\lambda_j}/(1-2it\lambda_j)^{1/2}$ and thus
\begin{align*}
\dfrac{1}{\left(1+4\lambda^2_{\max}t^2\right)^{\frac{q}{4}}}
\leq |\varphi(t)|\leq \dfrac{1}{\left(1+4\lambda^2_{\min}t^2\right)^{\frac{q}{4}}},
\end{align*}
with $\lambda_{\max}=\underset{k\geq 1}{\max}\, |\lambda_k|$ and 
$\lambda_{\min}=\underset{k\geq 1}{\min}\, |\lambda_k|$. 
This readily implies that $X$ has a bounded density and that, for all $t\in\mathbb{R}$,
\begin{align*}
|\varphi(t)|\int_0^{|t|}\frac{ds}{|\varphi(s)|}\leq C |t|,
\end{align*}
where
\begin{align*}
C:=\underset{t\in \mathbb{R}}{\sup}\left(\dfrac{(1+4\lambda^2_{\max}t^2)^{\frac{q}{4}}}
{(1+4\lambda^2_{\min}t^2)^{\frac{q}{4}}}\right) =\left( \frac{\lambda_{\max}}{\lambda_{\min}}\right)^{q/2}.
\end{align*}\\

(iv) More generally, let $(\lambda_k)_{k\geq 1}$ be an absolutely summable sequence such that $|\lambda_k|\ne 0$, 
for all $k\geq 1$. Let $X:= \sum_{k=1}^{+\infty}\lambda_k (Z_k^2-1)$ where $(Z_k)_{k\geq 1}$ is a sequence 
of iid standard normal random variables. Since $(\lambda_k)_{k\geq 1}$ is absolutely summable, 
the condition \eqref{HP:Reg} is verified. Let us now fix $N\geq 3$ and assume that the absolute values 
of the eigenvalues $(\lambda_k)_{k\geq 1}$ are indexed in decreasing 
order, i.e. $|\lambda_1| \geq |\lambda_2|\geq ...\geq |\lambda_N|\geq ...$. Then,
\begin{align*}
\dfrac{\psi_N(t)}{\left(1+4t^2|\lambda_1|^2\right)^{\frac{N}{4}}}\leq |\varphi(t)|
\leq \dfrac{\psi_N(t)}{\left(1+4t^2|\lambda_N|^2\right)^{\frac{N}{4}}},
\end{align*}
where
\begin{align*}
\psi_N(t):=\prod_{k=N+1}^{+\infty}\frac{1}{(1+4t^2\lambda_k^2)^{\frac{1}{4}}}.
\end{align*}
Since $0\leq \psi_N(t)\leq 1$, it is clear that $X$ has a bounded density. Moreover, for each $N$, 
$\psi_N$ is a decreasing function, thus,
\begin{align}\label{bound:SecChaos}
|\varphi(t)|\int_0^{|t|}\frac{ds}{|\varphi(s)|}\leq C |t|,
\end{align}
with
\begin{align*}
C:=\underset{t\in \mathbb{R}}{\sup}\left(\dfrac{(1+4\lambda^2_{1}t^2)^{\frac{q}{4}}}
{(1+4\lambda^2_{N}t^2)^{\frac{q}{4}}}\right) =\left( \frac{\lambda_1}{\lambda_N}\right)^{q/2}.
\end{align*}
\\
The next theorem pertains to quantitative convergence results inside the second Wiener chaos. 
Theorem $3.1$ of \cite{NP12} puts forward the fact that a sequence of second order Wiener chaos random variables 
converging in law, necessarily converges towards a random variable which is the sum of a centered Gaussian 
random variable (possibly degenerate) and of an independent second order Wiener chaos random variable.  
It is, therefore, natural to consider convergence in law for the following random variables
\begin{align*}
\sum_{k=1}^{+\infty}\lambda_{n,k}(Z^2_k-1)\underset{n\rightarrow +\infty}{\Longrightarrow}
\sum_{k=1}^{+\infty}\lambda_{\infty,k}(Z^2_k-1).
\end{align*}
To study this issue, below, $\ell^1$ denotes the space of absolutely summable real-valued sequences and 
for any such sequence $\lambda=(\lambda_k)_{k\geq 1}\in \ell^1$,  let 
$\|\lambda\|_{\ell^1}:=\sum_{k=1}^{+\infty}|\lambda_k|$. 

\begin{thm}\label{thm4.5} Let $(\lambda_n)_{n\geq 1}$ be a sequence of elements of $\ell^1$, 
converging (in $\|\cdot\|_{\ell_1}$) 
towards $\lambda_\infty\in \ell^1$. Moreover, let $|\lambda_{\infty,k}|\ne 0$ and 
$|\lambda_{n,k}|\ne 0$, for all $k\geq 1$, $n\geq 1$, and  
further, let $\sum_{k=1}^{+\infty}\lambda^2_{n,k}=\sum_{k=1}^{+\infty}\lambda^2_{\infty,k}=1/2$. 
Next, set $X_n=\sum_{k=1}^{+\infty}\lambda_{n,k}(Z^2_k-1)$, $n\geq 1$, $X_\infty
=\sum_{k=1}^{+\infty}\lambda_{\infty,k}(Z^2_k-1)$, and let   
\begin{align*}
\Delta_n&:=|\|\lambda^+_n\|_{\ell^1}-\|\lambda^+_\infty\|_{\ell^1}|
+|\|\lambda^-_n\|_{\ell^1}-\|\lambda^-_\infty\|_{\ell^1}| \\ 
&\qquad \qquad \qquad \qquad \quad +\|\lambda^+_n-\lambda^+_\infty\|_{\ell^1}
+\|\lambda^-_n-\lambda^-_\infty\|_{\ell^1},
\end{align*}
where $\Lambda_n^\pm:=\{\lambda^\pm_{n,k}, k\geq 1\}=\{\lambda_{n,k},\, \lambda_{n,k} > 0\, (< 0)\}$, 
%$\Lambda_n^-:=\{\lambda^-_{n,k}, k\geq 1\}=\{\lambda_{n,k},\, \lambda_{n,k}<0\}$ 
and similarly for $\Lambda_\infty^\pm$. Then,
\begin{align}
d_K(X_n,X_\infty)\leq C'_\infty \sqrt{\Delta_n}.
\end{align}
and
\begin{align}
d_{{W}_{2}}(X_n,X_\infty) \leq C_{\infty}'' \Delta_n \sqrt{|\ln \Delta_n|},
\end{align}
for some positive constants $C'_\infty, C''_\infty$ depending only on $X_\infty$.
\end{thm}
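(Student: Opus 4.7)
The plan is to apply Theorem~\ref{thm4.1} and the smooth-Wasserstein variant from the remark that follows it to the present sequence of second-order Wiener chaoses. The key structural inputs are that both $X_n$ and $X_\infty$ belong to $ID(b,0,\nu)$ with L\'evy measure of the form \eqref{chaoslevy} and that, because $\lambda_n,\lambda_\infty\in \ell^1$, a direct computation gives $\int_0^{+\infty}\!u\,\nu(du)=\|\lambda^+\|_{\ell^1}$ and $\int_{-\infty}^0(-u)\nu(du)=\|\lambda^-\|_{\ell^1}$, so the integrability hypothesis \eqref{HP:Reg0} holds. Since $\bbe X=0$ forces $b_0=-(\|\lambda^+\|_{\ell^1}-\|\lambda^-\|_{\ell^1})$, Corollary~\ref{cor:ExtSizeBias} represents $Y^\pm$ as a mixture of a Dirac at $0$ of mass $b_0^\pm/m_0^\pm$ and, for each $\lambda\in\Lambda^\pm$, of an exponential of mean $2|\lambda|$ with weight $|\lambda|/m_0^\pm$, where $m_0^\pm=b_0^\pm+\|\lambda^\pm\|_{\ell^1}$.

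Next, example (iv) just before the statement, applied after truncating the infinite product at a large enough index $N$, gives $L(\varphi_\infty)(t)\le C_\infty |t|$, so \eqref{HP:PolynBound} is satisfied with $p_\infty=1$, and the same product estimates show that $X_\infty$ has a bounded density. In parallel, $\sum_k\lambda_{\infty,k}^2=1/2$ together with $\lambda_\infty\in \ell^1$ yields $\sup_n\bbe\, e^{\eta|X_n|}<+\infty$ for some $\eta>0$ (uniformly in $n$ once $\|\lambda_n-\lambda_\infty\|_{\ell^1}$ is small) by the standard tail bound for weighted centered $\chi^2$'s, which realizes hypothesis \eqref{eq:TailSeq} with $\alpha=1$. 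I then bound the Theorem~\ref{thm4.1} quantity by the present $\Delta_n$: the differences $(m_0^n)^\pm-(m_0^\infty)^\pm$ reduce, via $m_0^\pm=b_0^\pm+\|\lambda^\pm\|_{\ell^1}$, to the first two summands; and a coupling of the $Y^\pm$ mixtures that matches the $k$-th exponential component index by index (whose $W_1$-contribution is $2|\lambda_{n,k}^\pm-\lambda_{\infty,k}^\pm|$) and absorbs the weight discrepancy through a trivial Lipschitz bound yields
\[
\bbe|Y_n^\pm-Y_\infty^\pm|\le C\bigl(\|\lambda_n^\pm-\lambda_\infty^\pm\|_{\ell^1}+|\,\|\lambda_n^\pm\|_{\ell^1}-\|\lambda_\infty^\pm\|_{\ell^1}\,|\bigr).
\]

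The sharp Kolmogorov exponent $1/2$, rather than the $1/3$ predicted by a naive use of Theorem~\ref{thm4.1} with $p_\infty=1$, is the main obstacle. I would circumvent it by passing through the crude coupling of the two chaoses driven by a common Gaussian sequence $(Z_k)$: this gives $\bbe|X_n-X_\infty|\le \|\lambda_n-\lambda_\infty\|_{\ell^1}\,\bbe|Z_1^2-1|\le C\Delta_n$, hence $W_1(X_n,X_\infty)\le C\Delta_n$, and the inequality \eqref{ineq:KW1} together with the bounded density of $X_\infty$ then delivers the claimed $d_K(X_n,X_\infty)\le C'_\infty\sqrt{\Delta_n}$. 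The smooth-Wasserstein bound $d_{W_2}(X_n,X_\infty)\le C''_\infty\Delta_n\sqrt{|\ln\Delta_n|}$ follows by combining the pointwise Fourier estimate \eqref{eq:epsilon} (with $p_\infty=1$) with the uniform exponential-moment bound just established and the tradeoff of \cite[Theorem~1]{AMPS17}, exactly as in \eqref{convW}.
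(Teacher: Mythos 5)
Your Kolmogorov bound is correct, but it takes a genuinely different route from the paper. The paper does not pass through a coupling of the chaoses at all: it computes $\Delta_n^+(s)-\Delta_n^-(s)$ explicitly for the Lévy measures \eqref{chaoslevy}, obtaining a difference of terms $\lambda/(1-2is\lambda)$, and uses that this map is $1$-Lipschitz in $\lambda$ uniformly in $s$ to get $|\Delta_n^+(s)-\Delta_n^-(s)|\leq \Delta_n$ with no extra factor of $|s|$; combined with \eqref{bound:SecChaos} this upgrades the generic estimate \eqref{eq:epsilon} to the degree-one bound $|\varepsilon_n(t)|\leq C_\infty |t|\Delta_n$, and Esseen with $T=\Delta_n^{-1/2}$ then yields the exponent $1/2$. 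Your bypass — couple through the common Gaussians so that $\bbe|X_n-X_\infty|\leq \|\lambda_n-\lambda_\infty\|_{\ell^1}\,\bbe|Z_1^2-1|\leq C\Delta_n$, then use \eqref{ineq:KW1} with the bounded density of $X_\infty$ from the example preceding the statement — is valid and in fact more elementary for the Kolmogorov part; what it does not give you is the pointwise Fourier estimate, which is what the paper needs next.

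The gap is in the $d_{W_2}$ claim. Feeding the generic bound \eqref{eq:epsilon} with $p_\infty=1$ (hence of degree $p_\infty+1=2$ in $|t|$) into \cite[Theorem 1]{AMPS17}, exactly as in \eqref{convW}, produces a bound on $d_{W_{p_\infty+2}}=d_{W_3}$, not on $d_{W_2}$; since $d_{W_3}\leq d_{W_2}$, this is strictly weaker than the stated conclusion. The order of the smooth Wasserstein distance delivered by that machinery is one plus the polynomial degree of the bound on $|\varepsilon_n(t)|$, so to reach $d_{W_2}$ you must first establish the linear-in-$|t|$ estimate $|\varepsilon_n(t)|\leq C_\infty|t|\Delta_n$ — precisely the chaos-specific computation sketched above, which your proposal replaces by the cruder Theorem \ref{thm4.1} bound $\bbe|Y_n^\pm-Y_\infty^\pm|$ (itself only sketched: the mixture-coupling estimate hides factors $1/(m_0^{n,\pm})$ that must be bounded below, though this is harmless for large $n$). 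Two minor points: the uniform exponential moment follows already from the normalization $\sum_k\lambda_{n,k}^2=1/2$ via the Gaussian-chaos tail bound, with no smallness of $\|\lambda_n-\lambda_\infty\|_{\ell^1}$ needed; and once the linear estimate on $\varepsilon_n$ is in hand, both the $\sqrt{\Delta_n}$ Kolmogorov rate and the $d_{W_2}$ rate follow, so your coupling step, while correct, becomes redundant.
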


\begin{proof}
Since, $\lambda_n$, $n\geq 1$, and $\lambda_\infty$ are absolutely summable, 
the conditions \eqref{HP:Reg0} of Theorem \ref{thm4.1} are satisfied. 
Then, from the proof of Theorem~\ref{thm4.1} and, for all $t\geq 0$,
\begin{align}\label{eq:epsi}
\varepsilon_n(t)=i\varphi_\infty(t)\int_0^t \frac{\varphi_n(s)}{\varphi_\infty(s)}\big(\Delta_n^+(s)
-\Delta_n^-(s)\big)ds.
\end{align}
Let us compute the quantities $\Delta_n^+$ and $\Delta_n^-$. By definition,
\begin{align*}
b^n_0=-\int_{-\infty}^{+\infty}u\nu_n(du)=-\sum_{\lambda\in \Lambda^+_{n}}\lambda
+\sum_{\lambda\in \Lambda^-_{n}}(-\lambda).
\end{align*}
Thus,
\begin{align*}
(b^n_0)^+&=\sum_{\lambda\in \Lambda^-_{n}}(-\lambda),\quad \quad\quad\quad\quad (b^n_0)^-
=\sum_{\lambda\in \Lambda^+_{n}}\lambda,\\
\tilde{\nu}^+_n(du)&=\frac{1}{2}\sum_{\lambda\in \Lambda^+_{n}}
e^{-\frac{u}{2\lambda}}\bbone_{(0,+\infty)}(u)du,\quad 
\tilde{\nu}^-_n(du)=\frac{1}{2}\sum_{\lambda\in \Lambda^-_{n}}
e^{-\frac{u}{2\lambda}}\bbone_{(-\infty,0)}(u)du.
\end{align*}
This implies that:
\begin{align*}
(m_0^n)^+&=(m_0^n)^-=\|\lambda_n\|_{\ell^1},\\
\varphi_{Y_n^+}(t)&=\frac{1}{(m_0^n)^+}\left((b^n_0)^++\sum_{\lambda\in \Lambda^+_{n}}
\dfrac{\lambda}{1-2it\lambda}\right),\\
\varphi_{Y_n^-}(t)&=\frac{1}{(m_0^n)^-}\left((b^n_0)^-+\sum_{\lambda\in \Lambda^-_{n}}
\dfrac{-\lambda}{1-2it\lambda}\right).
\end{align*}
Then, after straightforward computations
\begin{align*}
\Delta_n^+(s)-\Delta_n^-(s)=\,&b_0^n-b_0^{\infty}+\sum_{\lambda\in \Lambda^+_{n}}
\dfrac{\lambda}{1-2is\lambda}
-\sum_{\lambda\in \Lambda^+_{\infty}}\dfrac{\lambda}{1-2is\lambda}\\
&\quad+\sum_{\lambda\in \Lambda^-_{\infty}}\dfrac{-\lambda}{1-2is\lambda}
-\sum_{\lambda\in \Lambda^-_{n}}\dfrac{-\lambda}{1-2is\lambda}.
\end{align*}
Therefore,
\begin{align*}
|\Delta_n^+(s)-\Delta_n^-(s)|\leq &|\|\lambda^+_n\|_{\ell^1}-\|\lambda^+_\infty\|_{\ell^1}|
+|\|\lambda^-_n\|_{\ell^1}-\|\lambda^-_\infty\|_{\ell^1}|\\
&\quad+\|\lambda^+_n-\lambda^+_\infty\|_{\ell^1}+\|\lambda^-_n-\lambda^-_\infty\|_{\ell^1}.
\end{align*}
Combining the previous bound together with \eqref{bound:SecChaos} and \eqref{eq:epsi} entails,
\begin{align*}
|\varepsilon_n(t)|\leq C_\infty |t| \Delta_n.
\end{align*}
Finally, proceeding as in the proof of Theorem \ref{thm4.1},
\begin{align*}
d_K(X_n,X_\infty)\leq C'_\infty \sqrt{\Delta_n}.
\end{align*}
As for the upper bound on the smooth Wasserstein distance, recall the following tail property of 
second Wiener chaoses: there exists $K>0$ such that for all $X$ in the second Wiener chaos 
having unit variance and for all $x>2$:
\begin{align*}
\mathbb{P}(|X|>x)\leq \exp\left(-K x\right),
\end{align*}
(see e.g. \cite[Theorem 6.7]{SJ97}). This tail estimate implies that 
\begin{align*}
\underset{n\geq 1}{\sup}\, \bbe\, e^{\eta |X_n|}<\infty,\quad\quad \bbe\, e^{\eta_\infty |X_\infty|}<\infty, 
\end{align*}
for some $\eta,\eta_\infty>0$, and \cite[Theorem 1]{AMPS17} finishes the proof of the theorem.
\end{proof}

\begin{rem}\label{rem:chaos}
In the previous theorem, one could also consider $(\lambda_n)_{n\geq 1}$ such that, for all $n\geq 1$, there exists $k_n\geq 1$ (converging to $+\infty$ with $n$)  such that for all $k= 1,...,k_n$ $|\lambda_{n,k}|\ne 0$ and for all $k\geq k_{n}+1$, $\lambda_{n,k}=0$.  The quantity $\Delta_n$ 
would then depend on the remainder term, $R_n:=\sum_{k=k_n+1}^{+\infty}|\lambda_{\infty,k}|$.  
\end{rem}
%}

To conclude this discussion on the compound Poisson approximation of infinitely divisible distributions  let us 
consider the stable case. Clearly, and as already indicated, an $\alpha$-stable random variable satisfies neither the hypotheses 
of Corollary \ref{cor:ExtSizeBias} nor those of Proposition \ref{local}.  Nevertheless, the 
identities \eqref{eq:StableFrac} and \eqref{characstable} lead to our next result.
\begin{thm}\label{thm4.4}
Let $\alpha\in (1,2)$ and let  $X$ be an $\alpha$--stable random variable 
with L\'evy measure given by \eqref{StableLM} where $c_1,c_2\geq 0$ are such that $c_1+c_2>0$,   
and with characteristic function $\varphi$.  Let $X_n$, $n\geq 1$, 
be compound Poisson random variables each having a characteristic function
\begin{align}\label{CPA4}
\varphi_n(t):=\exp\left(n\left((\varphi(t))^{\frac{1}{n}}-1\right)\right).
\end{align}
Then,
\begin{align}
d_K(X_n,X) \leq C \frac{1}{n^{\frac{1}{\alpha+1}}},
\end{align}
where $C>0$ depends only on $\alpha$, $c_1$ and $c_2$.
\end{thm}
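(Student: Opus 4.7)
The plan is to mimic the argument of Theorem~\ref{thm4.3} and Proposition~\ref{prop4.2}, but with the stable characterizing identity \eqref{eq3.7} (equivalently \eqref{characstable}) playing the role of the size-bias / zero-bias identities (neither the hypotheses of Corollary~\ref{cor:ExtSizeBias} nor those of Proposition~\ref{local} are available for stable laws), and with the sharp generalized Dawson estimate \eqref{ineq:FineBoundStable} used to extract the right exponent.

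First, apply \eqref{eq3.7} to $X$ with $f(x)=e^{isx}$ to obtain the logarithmic derivative identity $\varphi'(s)=iS(s)\varphi(s)$, where
\[
S(s):=\bbe X+\int_\bbr (e^{isu}-1)u\nu(du).
\]
A direct estimate on \eqref{StableLM} (using $\alpha\in(1,2)$) gives the polynomial bound $|S(s)|\le C_1(1+|s|^{\alpha-1})$. Since $X_n$ is compound Poisson, the structural formula \eqref{CPA4} yields $\varphi_n'(t)=iS(t)\varphi(t)^{1/n}\varphi_n(t)$; subtracting and applying the same integrating-factor trick as in the proofs of Theorems~\ref{thm4.1},~\ref{thm4.3} and Proposition~\ref{prop4.2} produces, for $t\ge 0$,
\[
\varepsilon_n(t):=\varphi_n(t)-\varphi(t)=i\varphi(t)\int_0^t \frac{\varphi_n(s)}{\varphi(s)}\,S(s)\bigl(\varphi(s)^{1/n}-1\bigr)\,ds,
\]
with the symmetric expression for $t\le 0$.

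Next, I would estimate the integrand. Writing $\varphi=e^\psi$ with $\mathrm{Re}\,\psi(s)=\log|\varphi(s)|\le 0$, the elementary inequality $|e^z-1|\le|z|$ valid for $\mathrm{Re}\,z\le 0$ gives $|\varphi(s)^{1/n}-1|\le |\psi(s)|/n\le C_2(|s|+|s|^\alpha)/n$, so that
\[
|S(s)|\,|\varphi(s)^{1/n}-1|\le C_3(|s|+|s|^{2\alpha-1})/n.
\]
The key move is to bound $|\varphi_n(s)|\le 1$ (the universal bound on characteristic functions) rather than attempting to control the ratio $|\varphi_n/\varphi|$, and to pull the increasing polynomial out of the integral at $|t|$. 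The remaining integral is then exactly the generalized Dawson functional of $\varphi$, and invoking \eqref{ineq:FineBoundStable} (which extends from the symmetric to the general $\alpha$-stable case since $|\varphi(t)|=e^{-c|t|^\alpha}$ for some $c=c(c_1,c_2,\alpha)>0$) gives
\[
|\varepsilon_n(t)|\le \frac{C_3(|t|+|t|^{2\alpha-1})}{n}\,|\varphi(t)|\!\int_0^{|t|}\!\frac{ds}{|\varphi(s)|}\le \frac{C_4(|t|+|t|^{2\alpha-1})\,|t|}{n\,(1+|t|^\alpha)}.
\]
Splitting $|t|\le 1$ versus $|t|\ge 1$, this collapses to $|\varepsilon_n(t)|\le C_5|t|^\alpha/n$ for $|t|\ge 1$ (and $\le C_5|t|^2/n$ otherwise).

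Finally, since the $\alpha$-stable law has a bounded density, Esseen's inequality yields, for any $T\ge 1$,
\[
d_K(X_n,X)\le C_1'\int_{-T}^T \frac{|\varepsilon_n(t)|}{|t|}\,dt+\frac{C_2'}{T}\le C_6\Bigl(\frac{T^\alpha}{n}+\frac{1}{T}\Bigr),
\]
and the optimal choice $T=n^{1/(\alpha+1)}$ delivers the claimed rate $n^{-1/(\alpha+1)}$. The main obstacle to guard against is the natural but wasteful impulse to bound $|\varphi_n/\varphi|$ pointwise: a direct computation shows such a bound deteriorates beyond $|t|\lesssim n^{1/(2\alpha)}$ and would yield only the sub-optimal exponent $1/(2\alpha)$. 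The correct exponent $1/(\alpha+1)$ emerges precisely because we replace the ratio by the trivial estimate $|\varphi_n|\le 1$ and exploit the sharp polynomial decay of the stable Dawson functional in \eqref{ineq:FineBoundStable}.
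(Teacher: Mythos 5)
Your proposal is correct and follows essentially the same route as the paper's proof: the characterizing identity applied to $e^{ist}$ gives $\varphi'=i S\varphi$, the compound Poisson structure gives $\varphi_n'=iS\,\varphi^{1/n}\varphi_n$, and after the integrating-factor step the paper likewise discards the ratio via $|\varphi_n(s)|\le 1$, bounds $|S(s)|\lesssim 1+|s|^{\alpha-1}$ and $|\varphi(s)^{1/n}-1|\lesssim (|s|+|s|^{\alpha})/n$, invokes the Dawson-functional bound \eqref{ineq:FineBoundStable}, and concludes by Esseen with $T=n^{1/(\alpha+1)}$. Your minor variations (the $|e^{z}-1|\le |z|$ bound for $\mathrm{Re}\,z\le 0$, and the cleaner collapse of the final integrals to $T^{\alpha}/n$) are inessential.
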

\begin{proof} 
Thanks to \eqref{characstable} with $f(\cdot)=e^{it.}$,
\begin{align}\label{eq:diffsta}
\dfrac{d}{dt}(\varphi(t))=i\varphi(t)\left(c_2\int_0^{+\infty}(1-e^{-itu})\frac{du}{u^\alpha}-c_1\int_0^{+\infty}
(1-e^{itu})\frac{du}{u^\alpha}+\frac{c_1-c_2}{\alpha-1}\right).
\end{align}
Next, set
\begin{align*}
S(t):=\left(c_2\int_0^{+\infty}(1-e^{-itu})\frac{du}{u^\alpha}-c_1\int_0^{+\infty}(1-e^{itu})
\frac{du}{u^\alpha} +  \frac{c_1-c_2}{\alpha-1}\right).
\end{align*}
Moreover, from \eqref{CPA4},
\begin{align*}
\dfrac{d}{dt}(\varphi_n(t))=\frac{\varphi'(t)}{\varphi(t)}(\varphi(t))^{\frac{1}{n}}\varphi_n(t)
=iS(t)(\varphi(t))^{\frac{1}{n}}\varphi_n(t).
\end{align*}
Introducing the quantity $\Delta_n(t):=S(t)((\varphi(t))^{\frac{1}{n}}-1)$, 
\begin{align}\label{eq:diffCPAsta}
\dfrac{d}{dt}(\varphi_n(t))=iS(t)\varphi_n(t)+i\varphi_n(t)\Delta_n(t).
\end{align}
subtracting \eqref{eq:diffsta} from \eqref{eq:diffCPAsta} and setting $\varepsilon_n(t)
=\varphi_n(t)-\varphi(t)$, lead to
\begin{align*}
\dfrac{d}{dt}(\varepsilon_n(t))=iS(t)\varepsilon_n(t)+i\varphi_n(t)\Delta_n(t).
\end{align*}
Thus, for $t\geq 0$,
\begin{align*}
\varepsilon_n(t)=i\varphi(t)\int_0^t\frac{\varphi_n(s)}{\varphi(s)}\Delta_n(s)ds,
\end{align*}
and similarly for $t\leq 0$. Let us bound $S$. For $s> 0$,
\begin{align*}
|S(s)|&\leq c_2\int_0^{+\infty}|1-e^{-isu}|\frac{du}{u^\alpha}+c_1\int_0^{+\infty}|1-e^{isu}|
\frac{du}{u^\alpha}+\frac{|c_1-c_2|}{\alpha-1}\\
&\leq (c_1+c_2)s^{\alpha-1}\dfrac{2^{2-\alpha}}{(2-\alpha)(\alpha-1)}+\frac{|c_1-c_2|}{\alpha-1},
\end{align*}
using
\begin{align*}
\int_0^{+\infty}|1-e^{isu}|\frac{du}{u^\alpha}&\leq s^{\alpha-1}\bigg(\int_0^2\frac{du}{u^{\alpha-1}}
+2\int_2^{+\infty}\frac{du}{u^\alpha}\bigg)\\
&\leq |s|^{\alpha-1} \frac{2^{2-\alpha}}{(2-\alpha)(\alpha-1)}.
\end{align*}
Moreover,
\begin{align*}
\left|(\varphi(s))^{\frac{1}{n}}-1\right|\leq \frac{|s|}{n}\left((c_1+c_2)|s|^{\alpha-1}\dfrac{2^{2-\alpha}}{(2-\alpha)(\alpha-1)}
+\frac{|c_1-c_2|}{\alpha-1}\right),
\end{align*}
and
\begin{align*}
\varphi(s):=\exp\left(is\bbe X-c|s|^\alpha\left(1-i\beta \tan \frac{\pi\alpha}{2} \operatorname{sgn}(s)\right)\right),
\end{align*}
with $c=c_1+c_2$ and $\beta=(c_1-c_2)/(c_1+c_2)$. This implies that, for $t>0$,
\begin{align*}
|\varepsilon_n(t)|&\leq \frac{1}{n}|\varphi(t)|\int_0^t\frac{1}{|\varphi(s)|}
\left((c_1+c_2)s^{\alpha-1}\dfrac{2^{2-\alpha}}{(2-\alpha)(\alpha-1)}
+\frac{|c_1-c_2|}{\alpha-1}\right)^2|s|ds\\
&\leq \frac{1}{n}e^{-ct^\alpha}\int_0^t e^{cs^\alpha}s \left((c_1+c_2)s^{\alpha-1}\dfrac{2^{2-\alpha}}{(2-\alpha)(\alpha-1)}
+\frac{|c_1-c_2|}{\alpha-1}\right)^2ds\\
&\leq \frac{2}{n}\!\left(\!(c_1+c_2)\dfrac{2^{2-\alpha}}{(2-\alpha)(\alpha-1)}
+\frac{|c_1-c_2|}{\alpha-1}\right)^2\!(t+t^{2\alpha-1})\!\left(\!e^{-ct^\alpha}\!\!\!\int_0^t \!e^{cs^\alpha}ds\!\right)\!\\
&\leq \frac{2C}{n}\!\left(\!(c_1+c_2)\dfrac{2^{2-\alpha}}{(2-\alpha)(\alpha-1)}
+\frac{|c_1-c_2|}{\alpha-1}\right)^2 \dfrac{\!(t^2+t^{2\alpha})\!}{1+t^{\alpha}},
\end{align*}
where \eqref{ineq:FineBoundStable} is used to obtain the last inequality and where $C>0$ only 
depends on $\alpha, c_1$ and $c_2$. Therefore, for all $t\in\mathbb{R}$
\begin{align*}
|\varepsilon_n(t)|\leq \frac{C'}{n}\dfrac{\!(t^2+|t|^{2\alpha})\!}{1+|t|^{\alpha}},
\end{align*}
for some $C'>0$ depending only on $\alpha$ and $c$. To conclude the proof of this theorem, 
we proceed as in the end of the proof of Theorem~\ref{thm4.3}: by Esseen inequality,
\begin{align*}
d_K(X_n,X)\leq C_1\int_{-T}^T \dfrac{|\varepsilon_n(t)|}{|t|}dt+C_2\dfrac{\|h_\alpha\|_\infty}{T},
\end{align*}
where $C_1>0$, $C_2>0$, while $h_\alpha$ is the density of the stable distribution. Thus,
\begin{align*}
d_K(X_n,X)\leq \frac{C'_1}{n}\left(\int_0^{T}\dfrac{t}{1+t^{\alpha}}dt+\int_0^{T}
\dfrac{t^{2\alpha-1}}{1+t^{\alpha}}dt\right)+C_2\dfrac{\|h_\alpha\|_\infty}{T}.
\end{align*}
Now, the idea is to exploit the different behaviors of the functions 
$t\rightarrow t/(1+t^{\alpha})$ and $t\rightarrow t^{2\alpha-1}/(1+t^{\alpha})$ 
at $0$ and at infinity to optimize the powers of $T$ appearing on the right-hand side of the previous inequality. 
Then, for $T\geq1$,
\begin{align*}
\int_0^{T}\dfrac{t}{1+t^{\alpha}}dt&\leq \int_{0}^\varepsilon  \dfrac{t}{1+t^{\alpha}}dt
+ \int_{\varepsilon}^T  \dfrac{t}{1+t^{\alpha}}dt\\
&\leq \frac{\varepsilon^2}{2}+\varepsilon^{1-\alpha}T\\
&\leq C T^{\frac{2}{\alpha+1}}, 
\end{align*}
where we optimized in $\varepsilon$ in the last line and where $C>0$ only depends on $\alpha$. Similarly,
\begin{align*}
\int_0^{T}\dfrac{t^{2\alpha-1}}{1+t^{\alpha}}dt\leq C T^{\alpha}.
\end{align*}
Thus,
\begin{align*}
d_K(X_n,X)\leq \frac{C}{n}\left(T^{\alpha}+T^{\frac{2}{\alpha+1}}\right)+C_2\dfrac{\|h_\alpha\|_\infty}{T}.
\end{align*}
Finally, choosing $T=n^{\frac{1}{\alpha+1}}$ finishes the proof of the theorem.
\end{proof}

For symmetric $\alpha$-stable distribution, and in view of the proof of 
Proposition~\ref{prop:DNA}, the rate of convergence obtained above can be improved to 
$n^{1/\alpha}$. This is the content of the next result.

\begin{thm}\label{thm4.5}
Let $\alpha\in (1,2)$ and let $X\sim S\alpha S$ 
with characteristic function given by $\varphi(t)=\exp(-|t|^\alpha)$.   
Let $X_n$, $n\geq 1$, be compound Poisson random variables each having characteristic function 
\begin{align*}
\varphi_n(t):=\exp\left(n\left((\varphi(t))^{\frac{1}{n}}-1\right)\right).
\end{align*}
Then,
\begin{align}
d_K(X_n,X) \leq  \frac{C}{n^{\frac{1}{\alpha}}},
\end{align}
where $C>0$ only depends on $\alpha$.
\end{thm}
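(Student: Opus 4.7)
The plan is to refine the proof of Theorem \ref{thm4.4} by exploiting two features that are special to the standard symmetric case with $\varphi(t)=\exp(-|t|^{\alpha})$. First, I would specialize the characterizing identity \eqref{characstable} with $c_1=c_2=c$: the constant term $(c_1-c_2)/(\alpha-1)$ then disappears and one has $\varphi'(t)=iS(t)\varphi(t)$ with $S(t)=2ic\int_0^{+\infty}\sin(tu)u^{-\alpha}du$. Matching this ODE against the explicit expression $\varphi'(t)/\varphi(t)=-\alpha\,\mathrm{sgn}(t)|t|^{\alpha-1}$ obtained by differentiating $\varphi$ directly yields the clean identity $|S(t)|=\alpha|t|^{\alpha-1}$, one power sharper than the bound used in Theorem \ref{thm4.4} where the constant term was present.

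Following the same structure as in the proof of Theorem \ref{thm4.4}, I obtain $\varepsilon_n(t)=i\varphi(t)\int_0^t \frac{\varphi_n(s)}{\varphi(s)}\Delta_n(s)\,ds$ with $\Delta_n(s)=S(s)((\varphi(s))^{1/n}-1)$, and the simple estimate $|e^{-|s|^{\alpha}/n}-1|\leq |s|^{\alpha}/n$ gives $|\Delta_n(s)|\leq (\alpha/n)|s|^{2\alpha-1}$. The decisive new ingredient is a sharper bound on the ratio $|\varphi_n(s)/\varphi(s)|=\exp(n(e^{-y}-1+y))$, where $y=|s|^{\alpha}/n$. The elementary inequality $1-e^{-y}\geq y/2$, valid on $[0,1]$, is equivalent to $e^{-y}-1+y\leq y/2$, hence
\begin{equation*}
\left|\frac{\varphi_n(s)}{\varphi(s)}\right|\leq \exp\!\Bigl(\frac{|s|^{\alpha}}{2}\Bigr),\qquad |s|\leq n^{1/\alpha}.
\end{equation*}

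Combining these inputs, for $0\leq t\leq n^{1/\alpha}$, one has
$|\varepsilon_n(t)|\leq (\alpha/n)\,e^{-t^{\alpha}}\int_0^t e^{s^{\alpha}/2}s^{2\alpha-1}\,ds$.
An integration by parts (writing $s^{2\alpha-1}ds=s^{\alpha}\cdot s^{\alpha-1}ds$ and recognizing that $\alpha s^{\alpha-1}e^{s^{\alpha}/2}ds=2\,d(e^{s^{\alpha}/2})$) evaluates this integral in closed form, yielding the pointwise bounds $|\varepsilon_n(t)|\leq Ct^{2\alpha}/n$ for $t\leq 1$ and $|\varepsilon_n(t)|\leq C(t^{\alpha}e^{-t^{\alpha}/2}+e^{-t^{\alpha}})/n$ for $t\geq 1$. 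I would then conclude via Esseen's inequality with the natural cutoff $T=n^{1/\alpha}$: the small-$t$ estimate on $[0,1]$ (where $t^{2\alpha-1}$ is integrable at the origin) and the large-$t$ estimate on $[1,T]$ (where $\int_1^{+\infty} t^{\alpha-1}e^{-t^{\alpha}/2}dt<+\infty$) each contribute $O(1/n)$ to $\int_{-T}^{T}|\varepsilon_n(t)|/|t|\,dt$, while the boundary term $\|h_\alpha\|_\infty/T$ is $O(n^{-1/\alpha})$. Since $\alpha>1$, the latter dominates and the announced rate follows.

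The main obstacle in this plan is the sharp control of the ratio $\varphi_n/\varphi$. The naive bound $|\varphi_n|\leq 1$ used in Theorem \ref{thm4.4} gives only $|\varphi_n/\varphi|\leq e^{|s|^{\alpha}}$, which precisely cancels the decay factor in $\varphi(t)$ and forces the integrand $e^{cs^{\alpha}}s^{2\alpha-1}$ to be tamed only polynomially via the generalized Dawson bound \eqref{ineq:FineBoundStable}, recovering merely the $n^{-1/(\alpha+1)}$ rate. The symmetric exponential form of the target characteristic function is essential for gaining the extra factor of one half, and the matching of the window $|s|\leq n^{1/\alpha}$ with the Esseen cutoff is what converts this exponent improvement into the rate $n^{-1/\alpha}$.
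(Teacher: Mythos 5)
Your proposal is correct and follows essentially the same route as the paper: the same Stein--Tikhomirov representation $\varepsilon_n(t)=i\varphi(t)\int_0^t(\varphi_n(s)/\varphi(s))\Delta_n(s)\,ds$ with $|\Delta_n(s)|\leq C|s|^{2\alpha-1}/n$, a sharpened bound on the ratio $\varphi_n/\varphi$ of the form $e^{c s^{\alpha}}$ with $c<1$ on a window $|s|\lesssim n^{1/\alpha}$, and Esseen's inequality with cutoff $T\asymp n^{1/\alpha}$. The only differences are cosmetic: you avoid the paper's rescaling $t\mapsto n^{1/\alpha}t$ and its constant $C(\varepsilon)<1$ by using the explicit inequality $e^{-y}-1+y\leq y/2$ on $[0,1]$, and you evaluate the resulting integral in closed form rather than bounding it after the change of variables.
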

\begin{proof}
From the proof of Proposition~\ref{prop:DNA} (with its notations), for $t\geq 0$,
\begin{align*}
\varepsilon_n(t)=i\varphi(t)\int_0^t\frac{\varphi_n(s)}{\varphi(s)}\Delta_n(s)ds.
\end{align*}
Moreover, for all $0\leq s\leq t$,
\begin{align*}
|\Delta_n(s)|\leq \frac{s^{2\alpha-1}}{n},
\end{align*}
hence,
\begin{align*}
\left|\varepsilon_n(n^\frac{1}{\alpha}t)\right| \leq C n \left|\varphi\left(n^\frac{1}{\alpha}t\right)\right|\int_0^t
\left|\frac{\varphi_n(n^\frac{1}{\alpha}s)}{\varphi(n^\frac{1}{\alpha}s)}\right| |s|^{2\alpha-1}ds.  
\end{align*}
We next detail how to bound the ratio $\left|\varphi_n\left(n^\frac{1}{\alpha}s\right)/ \varphi\left(n^\frac{1}{\alpha}s\right)\right|$.  
For $0\leq s\leq t\leq 1$,
\begin{align*}
\left|\frac{\varphi_n\left(n^\frac{1}{\alpha}s\right)}{\varphi\left(n^\frac{1}{\alpha}s\right)}\right|
\leq \exp \left(n (\exp(-s^\alpha)-1+s^\alpha)\right).  
\end{align*}
Now, pick $\varepsilon \in (0,1)$ such that 
$0<C(\varepsilon)=\underset{s\in (0,\varepsilon)}{\max} (\exp(-s^\alpha)-1+s^\alpha)/s^{\alpha}<1$. Then,
\begin{align*}
\left|\varepsilon_n\left(n^\frac{1}{\alpha}t\right)\right| \leq C n e^{-n(1-C(\varepsilon))t^\alpha}t^{2\alpha}.
\end{align*}
A similar bound can also be obtained for $ t\leq 0$. 
Setting $T:=\varepsilon n^{{1}/{\alpha}}$, and applying Esseen's inequality, we finally get
\begin{align*}
d_K(X_n,X_\infty)&\leq C'_1\int_{-\varepsilon n^{\frac{1}{\alpha}}}^{+\varepsilon n^{\frac{1}{\alpha}}}
\dfrac{|\varepsilon_n(t)|}{|t|}dt+C_2\dfrac{\|h_\alpha\|_\infty}{\varepsilon n^{\frac{1}{\alpha}}}\\
&\leq C'_1\int_{-\varepsilon}^{+\varepsilon}\dfrac{|\varepsilon_n(n^{\frac{1}{\alpha}}t)|}{|t|}dt+C_2\dfrac{\|h_\alpha\|_\infty}{\varepsilon n^{\frac{1}{\alpha}}}\\
&\leq C'_1\int_{0}^{+\varepsilon}C n e^{-n(1-C(\varepsilon))t^\alpha}t^{2\alpha-1}dt
+C_2\dfrac{\|h_\alpha\|_\infty}{\varepsilon n^{\frac{1}{\alpha}}}\\
&\leq C'_1\int_{0}^{n^{\frac{1}{\alpha}}\varepsilon}e^{-(1-C(\varepsilon))t^{\alpha}}\frac{t^{2\alpha-1}}{n}dt
+C_2\dfrac{\|h_\alpha\|_\infty}{\varepsilon n^{\frac{1}{\alpha}}}\\
&\leq \frac{C_{\varepsilon,\alpha}}{n}+C_2\dfrac{\|h_\alpha\|_\infty}{\varepsilon n^{\frac{1}{\alpha}}}, 
\end{align*}
for some $C_{\varepsilon,\alpha}>0$, depending only on $\varepsilon$ and $\alpha$, and where again $h_\alpha$ is the 
(bounded) density of the $S\alpha S$-law.  
This concludes the proof of the theorem.
\end{proof}

\section{Stein Equation}\label{sec:SE}
\noindent
Having found in Section \ref{sec:CC} that the operator $\cala_{\operatorname{gen}}$ given for all $f\in BLip(\bbr)$, by
$$\cala_{\operatorname{gen}} f(x)=xf(x)-bf(x)-\int^{+\infty}_{-\infty} (f(x+u)-f(x)
\bbone_{|u|\le 1})u\nu (du),$$
characterizes $X\sim ID(b,0,\nu)$, 
the usual next step in Stein's method is to show that for any 
$h\in \calh$ (a class of nice functions),
\begin{align}\label{eq:SteinGen}
\cala_{\operatorname{gen}} f(x)=h(x)-\bbe h(X),
\end{align}
has a solution $f_h$ which also belongs to a class of nice functions.  
(Of course for
$X\sim ID(b,\sigma^2,\nu)$, the integral operator $\cala_{gen}$ becomes an
integro-differential operator given by
$$\cala_{\operatorname{gen}} f(x)=xf(x)-\sigma^2f'(x)-bf(x)-
\int^{+\infty}_{-\infty}(f(x+u)-f(x)
\bbone_{|u|\le 1})u\nu (du).)$$

Then, when interested in comparing the law of some
random variable  $Y$ to the law of $X$, one needs to estimate 
$$\sup_{h\in\calh}|\bbe h(Y)-\bbe h(X)|=\sup_{h\in\calh}
|\bbe\cala_{\operatorname{gen}} f_h(Y)|.$$
\noindent
In the sequel, we develop a semigroup methodology to solve a corresponding Stein equation for non-degenerate 
self-decomposable laws 
on $\mathbb{R}$. Semigroup methods have been initiated in \cite{Bar90,Go1991} and mainly developed for multivariate normal approximation or for approximation of diffusions. To start with, recall  that by definition $X$,  having characteristic function 
$\varphi$, is self-decomposable 
if for any $\gamma\in (0,1)$, 
\begin{align}
\varphi_\gamma(t):=\frac{\varphi(t)}{\varphi(\gamma t)}, 
\end{align}
$t\in \mathbb{R}$, is itself a characteristic function (\cite[Definition $15.1$]{S}).  
Moreover, recall also that non-degenerate self-decomposable laws are infinitely divisible and absolutely continuous (see \cite[Proposition $15.5$]{S} and \cite[Chapter V, Section $6$, Theorem $6.14$]{SV03}). The class of self-decomposable distributions comprise many of the infinitely divisible ones. To name but a few, the stable distributions, the gamma distributions, the second Wiener chaos type distributions, the Laplace distribution, the Dickman distribution, the Pareto distribution, the log-normal distribution, the logistic distribution, the Student's t distribution are all self-decomposable. We refer the reader to \cite{S,SV03} for more examples and properties of self-decomposable distributions. Next, as usual denote by ${\cal S}(\mathbb{R})$ the Schwartz space of infinitely differentiable rapidly decreasing 
real-valued functions defined on $\mathbb R$, and by $\mathcal{F}$ the Fourier transform operator 
given, for $f\in {\cal S}(\mathbb{R})$, by 
\begin{align}
\mathcal{F}(f)(\xi)=\int_{-\infty}^{+\infty}f(x)e^{-i x\xi}dx.  
\end{align}
For $X\sim ID(b,0,\nu)$, let also 
\begin{align}
\beta^* :=\sup \left\{ \beta \geq 1:\, \int_{|u|>1}|u|^\beta d\nu(u)<+\infty \right\}. 
\end{align}
For $X\sim ID(b,0,\nu)$ non-degenerate and self-decomposable with law $\mu_X$, let $f_X$ be its Radon-Nikodym derivative with respect to the Lebesgue measure, $\operatorname{S}(f_X)=\{x\in \mathbb{R},\, 0\leq f_X(x)<+\infty\}$ and $N(f_X)=\{x\in\mathbb{R},\, f_X(x)=0\}$. Thanks to \cite[Theorem $28.4$]{S}, $f_X$ is continuous on $\operatorname{S}(f_X)$ (actually on $\mathbb{R}$ or on $\mathbb{R}\setminus \{b_0\}$ if $b_0$ exists). The integrability properties of the measure $|u|\nu(du)$ on $\{|u| \leq 1\}$, ensure that the following alternatives hold true, e.g. see \cite[Chapter $5$, Section $24$]{S}
\begin{itemize}
\item If $\int_{|u|\leq 1}|u|\nu(du)<+\infty$, then the support of $\mu_X$ (denoted by $\operatorname{Supp}(\mu_X)$) is $[b_0,+\infty)$ or $(-\infty,b_0]$ or $\mathbb{R}$.
\item If $\int_{|u|\leq 1}|u|\nu(du)=+\infty$, then the support of $\mu_X$ is $\mathbb{R}$.\noindent
\end{itemize}
Before solving the Stein equation \eqref{eq:SteinGen}, we start with:

\begin{prop}\label{lem:SG}
Let $X\sim ID(b,0,\nu)$ be self-decomposable 
with law $\mu_X$, characteristic function $\varphi$ and such that $\bbe|X|<\infty$.  
Let $(P^{\nu}_t)_{t\geq 0}$ be the family of operators defined, for all $t\geq 0$ and 
for all $f\in{\cal S}(\mathbb{R})$, via
\begin{align}
P^{\nu}_t(f)(x)=\frac{1}{2\pi}\int_{-\infty}^{+\infty} \mathcal{F}(f)(\xi)e^{i\xi x e^{-t}}
\frac{\varphi(\xi)}{\varphi(e^{-t} \xi)}d\xi.
\end{align}
Then, $\mu_X$ is invariant for $(P^{\nu}_t)_{t\geq 0}$ and $(P^{\nu}_t)_{t\geq 0}$ extends to a $C_0$-semigroup on $L^p(\mu_X)$, with $1\leq p\leq \beta^*$.   
Its generator $\mathcal{A}$, is defined for all $f\in {\cal S}(\mathbb{R})$ and for all $x\in \mathbb{R}$, by
\begin{align}
\mathcal{A}(f)(x)&=\frac{1}{2\pi}\int_{-\infty}^{+\infty}\mathcal{F}(f)(\xi)e^{i\xi x}(i\xi)
\left(-x+\bbe X+\int_{-\infty}^{+\infty}\left(e^{i u \xi}-1\right)u\nu(du)\right)d\xi\\
&=(\bbe X-x) f'(x)+\int_{-\infty}^{+\infty}\left(f'(x+u)-f'(x)\right)u\nu(du).
\end{align}
\end{prop}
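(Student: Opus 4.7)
The plan is to identify $(P^\nu_t)_{t\geq 0}$ with the Ornstein--Uhlenbeck--type semigroup naturally attached to $X$ via its self-decomposability, and then read off all three conclusions from that identification. By self-decomposability, for each $t\geq 0$ the ratio $\varphi_t(\xi):=\varphi(\xi)/\varphi(e^{-t}\xi)$ is a characteristic function; let $Z_t$ be independent of $X$ with this law. Fubini and Fourier inversion then give, for $f\in\mathcal{S}(\mathbb{R})$, the probabilistic representation $P^\nu_t(f)(x)=\bbe f(e^{-t}x+Z_t)$. Since $e^{-t}X+Z_t=_d X$, invariance of $\mu_X$ is immediate by Fubini, and the semigroup property $P^\nu_s P^\nu_t=P^\nu_{s+t}$ reduces to the identity in law $e^{-t}Z_s+Z'_t=_d Z_{s+t}$ (with $Z'_t$ independent of $Z_s$ and $Z'_t=_d Z_t$), which follows by multiplying characteristic functions:
\begin{align*}
\varphi_s(e^{-t}\xi)\,\varphi_t(\xi)=\frac{\varphi(e^{-t}\xi)}{\varphi(e^{-(s+t)}\xi)}\cdot\frac{\varphi(\xi)}{\varphi(e^{-t}\xi)}=\varphi_{s+t}(\xi).
\end{align*}

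Jensen's inequality combined with invariance then yields the contraction bound $\|P^\nu_t(f)\|_{L^p(\mu_X)}\leq\|f\|_{L^p(\mu_X)}$ for $f\in\mathcal{S}(\mathbb{R})$ and $1\leq p\leq \beta^\ast$, so $P^\nu_t$ extends by density to a contraction on $L^p(\mu_X)$. Strong continuity at $t=0$ follows from $Z_t\Rightarrow 0$ as $t\downarrow 0$ (since $\varphi_t\to 1$ pointwise) together with bounded convergence on $\mathcal{S}(\mathbb{R})$, then extends to the whole space by the contraction bound. To identify the generator on $\mathcal{S}(\mathbb{R})$, rapid decay of $\mathcal{F}(f)$ justifies differentiating under the Fourier integral at $t=0$. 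The centered L\'evy--Khintchine representation \eqref{eq2.3}, valid since $\bbe|X|<\infty$, yields
\begin{align*}
\frac{\varphi'(\xi)}{\varphi(\xi)}=i\bbe X+i\int_{-\infty}^{+\infty}(e^{i\xi u}-1)\,u\,\nu(du),
\end{align*}
from which the Fourier form of $\mathcal{A}(f)$ stated in the proposition follows by direct calculation of $\partial_t|_{t=0}\bigl[e^{i\xi e^{-t}x}\varphi(\xi)/\varphi(e^{-t}\xi)\bigr]$. To pass to the integro--differential form, one splits the bracketed factor into the linear piece $i\xi(\bbe X-x)$ and the integral piece, then applies Fubini together with the inverse Fourier identity $\mathcal{F}^{-1}[(i\xi)e^{i\xi(x+u)}\mathcal{F}(f)(\xi)](x)=f'(x+u)$; Fubini is legitimate because $|f'(x+u)-f'(x)|\leq C_f(|u|\wedge 1)$ for $f\in\mathcal{S}(\mathbb{R})$ and $\int(|u|\wedge u^2)\,\nu(du)<\infty$.

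The main obstacle will be promoting the pointwise identification of $\mathcal{A}$ into a bona fide strong-$L^p(\mu_X)$ limit of the difference quotient $(P^\nu_t f-f)/t$ for $f\in\mathcal{S}(\mathbb{R})$. The presence of the linear term $(\bbe X - x)f'(x)$ in $\mathcal{A}(f)$ forces $x\mapsto x\in L^p(\mu_X)$, which is precisely the role of the restriction $p\leq\beta^\ast$: under this moment condition one can dominate the difference quotient by a function of the form $C(1+|x|)$, and a dominated convergence argument in $L^p(\mu_X)$ then upgrades the pointwise derivative to the strong limit, identifying $\mathcal{A}$ as the infinitesimal generator of $(P^\nu_t)_{t\geq 0}$ on its Schwartz core.
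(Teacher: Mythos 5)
Your proposal is correct and follows essentially the same route as the paper: the probabilistic representation $P^{\nu}_t(f)(x)=\bbe f(e^{-t}x+Z_t)$ furnished by self-decomposability, invariance and the $L^p$ contraction via Jensen plus invariance, and identification of the generator by differentiating the Fourier kernel using the centered L\'evy--Khintchine form of $\varphi'/\varphi$, with Fubini giving the integro-differential form. The only step you assert without proof --- the uniform-in-$t$ domination of the difference quotient by $C_f(1+|x|)$, needed both to differentiate under the integral and for the $L^p$ limit --- is precisely what the paper isolates as Lemma \ref{lem:PointConv} and Lemma \ref{lem:PointBound} of its Appendix, and it does hold as claimed (using that $|\varphi(\xi)/\varphi(e^{-t}\xi)|\le 1$ by self-decomposability together with elementary bounds on the increments of the L\'evy--Khintchine exponent), so there is no genuine gap.
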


\begin{proof}
First, it is easy to see that for any $f\in {\cal S}(\mathbb{R})$, 
\begin{align*}
&P^\nu_0(f)(x)=f(x), \qquad \underset{t\rightarrow +\infty}{\lim}P^\nu_t(f)(x)
= \int_{-\infty}^{+\infty} f(x)\mu_X(dx), \\
&\int_{-\infty}^{+\infty} P^\nu_t(f)(x)d\mu_X(x)=\int_{-\infty}^{+\infty} f(x)\mu_X(dx).
\end{align*}
Next, let $s,t\geq 0$ and $f\in {\cal S}(\mathbb{R})$.  Then, on the one hand,
\begin{align*}
P_{t+s}^\nu(f)(x)=\frac{1}{2\pi}\int_{-\infty}^{+\infty}\mathcal{F}(f)(\xi)e^{i\xi e^{-(t+s)}x}
\frac{\varphi(\xi)}{\varphi(e^{-(t+s)}\xi)}d\xi, 
\end{align*}
while on the other hand
\begin{align*}
P_{t}^\nu(P_s^\nu(f))(x)&=\frac{1}{2\pi}\int_{-\infty}^{+\infty}\mathcal{F}(P_s^\nu(f))(\xi)e^{i\xi e^{-t}x}\frac{\varphi(\xi)}{\varphi(e^{-t}\xi)}d\xi\\
&=\frac{1}{2\pi}\int_{-\infty}^{+\infty}e^{s}\mathcal{F}(f)(e^s\xi)\dfrac{\varphi(e^s \xi)}{\varphi(\xi)}e^{i\xi e^{-t}x}\frac{\varphi(\xi)}{\varphi(e^{-t}\xi)}d\xi\\
&=\frac{1}{2\pi}\int_{-\infty}^{+\infty}\mathcal{F}(f)(\xi)e^{i\xi e^{-(t+s)}x}
\frac{\varphi(\xi)}{\varphi(e^{-(t+s)}\xi)}d\xi, 
\end{align*}
since $\mathcal{F}(P^\nu_t(f))(\xi)=e^{t}\mathcal{F}(f)(e^t\xi)\dfrac{\varphi(e^t \xi)}{\varphi(\xi)}$.  
The semigroup property is therefore verified on ${\cal S}(\mathbb{R})$.   Now, let $t\in (0,1)$ and let 
$f\in {\cal S}(\mathbb{R})$. Then,
\begin{align*}
\frac{1}{t}\left(P^\nu_t(f)(x)-f(x)\right)=\int_{-\infty}^{+\infty}\mathcal{F}(f)(\xi)e^{i\xi x}\frac{1}{t}\left(e^{i\xi x (e^{-t}-1)}\frac{\varphi(\xi)}{\varphi(e^{-t}\xi)}-1\right)\frac{d\xi}{2\pi}. 
\end{align*}
But by Lemma \ref{lem:PointConv} of the Appendix,
\begin{align*}
\underset{t\rightarrow 0^+}{\lim}\frac{1}{t}\left(e^{i\xi x (e^{-t}-1)}\frac{\varphi(\xi)}{\varphi(e^{-t}\xi)}-1\right)
=\left(-x+\bbe X+\int_{-\infty}^{+\infty}\left(e^{i u \xi}-1\right)u\nu(du)\right)(i\xi).
\end{align*}
Moreover, applying Lemma \ref{lem:PointBound} of the Appendix, for $t\in (0,1)$,
\begin{align*}
\left|\frac{1}{t}\left(e^{i\xi x (e^{-t}-1)}\frac{\varphi(\xi)}{\varphi(e^{-t}\xi)}-1\right)\right|\leq C (1+|\xi|)(\bbe |X|+|x|+|\xi|+1), 
\end{align*}
for some numerical constant $C>0$ independent of $t$. Thus, 
\begin{align*}
\underset{t\rightarrow 0^+}{\lim}\frac{1}{t}\left(P^\nu_t(f)(x)-f(x)\right)=\mathcal{A}(f)(x), 
\end{align*}
and, therefore, the generator of $(P^\nu_t)_{t\geq 0}$ on ${\cal S}(\mathbb{R})$ is indeed $\mathcal{A}$.  
Let $t\geq 0$, $f\in 
{\cal S}(\mathbb{R})$ and $1\leq p\leq \beta^*$. Since $X$ is self-decomposable, there exists a 
probability measure $\mu_t$ such that 
\begin{align}\label{eq:defmut}
\frac{\varphi(\xi)}{\varphi(e^{-t}\xi)}=\int_{-\infty}^{+\infty}e^{i u \xi }\mu_t(du), 
\end{align}
and therefore, 
\begin{align}\label{eq:SpaceRep}
P^\nu_t(f)(x)=\int_{-\infty}^{+\infty} f(u+e^{-t}x)\mu_t(du).  
\end{align}
The previous representation allows to extend the semigroup to $C_b(\mathbb{R})$, the space of bounded continuous 
functions on $\mathbb{R}$ endowed with the supremum norm, and $P^\nu_t(C_b(\mathbb{R}))\subset C_b(\mathbb{R})$. Therefore, $P_t^\nu$ is a contraction semigroup on $C_b(\mathbb{R})$ such that for all $f\in C_b(\mathbb{R})$ and for all $t>0$,
\begin{align}\label{eq:invCb}
\int_{\mathbb{R}} P^\nu_t(f)(x)d\mu_X(x)=\int_{\mathbb{R}} f(x)d\mu_X(x).
\end{align}
and such that for all $f\in C_b(\mathbb{R})$ and for all $x\in\mathbb{R}$
\begin{align}\label{eq:Pointconv}
\underset{t\rightarrow 0^+}{\lim}P^\nu_t(f)(x)=f(x).
\end{align}
Indeed, one can check the invariance property \eqref{eq:invCb} on $C_b(\mathbb{R})$ by noting that the probability measure $\left(\mu_X\otimes \mu_t\right)\circ \psi_t^{-1}$, with $\psi_t(x,y)=e^{-t}x+y$, is equal to $\mu_X$. Similarly, one can check the pointwise convergence property \eqref{eq:Pointconv} by using the fact that, by the L\'evy continuity theorem, $\mu_t\circ \varphi_{t,x}^{-1}$, where $\varphi_{t,x}(u)=e^{-t}x+u$, for all $u\in \mathbb{R}$, converges weakly towards $\delta_x$ as $t\rightarrow 0^+$. The semigroup property of $(P^\nu_t)$ on $C_b(\mathbb{R})$ follows by similar arguments. Finally, 
\begin{align*}
\int_{-\infty}^{+\infty}\left| P^\nu_t(f)(x)\right|^p\mu_X(dx)&\leq \int_{-\infty}^{+\infty} 
P^\nu_t(|f|^p)(x)\mu_X(dx)\\
&\leq \int_{-\infty}^{+\infty} |f(x)|^p\mu_X(dx).  
\end{align*}
A standard approximation argument concludes the proof of the proposition.
\end{proof}
\noindent
\begin{rem}\label{rem:SG}
(i) It is important to note that the representation \eqref{eq:SpaceRep} also allows to extend the semigroup to the space of continuous functions on $\mathbb{R}$ vanishing at $\pm$ infinity. Moreover, this extension is a Feller semigroup (e.g. one can apply \cite[Proposition 2.4]{RY99}).\\
(ii) Self-decomposable distributions are naturally associated with Ornstein-Uhlenbeck type Markov processes. Indeed, thanks to \cite[Theorem 17.5]{S}, for any self-decomposable distributions $\mu$, one can find an Ornstein-Uhlenbeck type Markov process such that $\mu$ is its invariant measure. Hence, from a heuristic point of view, it seems legitimate to implement a semigroup method to solve a Stein equation associated with a self-decomposable law.\\
(iii) When $\nu$ is the L\'evy measure of a symmetric $\alpha$ stable distribution, the generator of the semigroup $(P^\nu_t)_{t\geq 0}$ boils down to
\begin{align*}
\mathcal{A}(f)(x)&=-xf'(x)+\int_{-\infty}^{+\infty}\left(f'(x+u)-f'(x)\right)u\nu(du)\\
&=-xf'(x)+\int_{0}^{+\infty}\left(f'(x+u)-f'(x-u)\right)\frac{cdu}{u^\alpha},
\end{align*}
which is, thanks to \eqref{rep:FracLap}, proportional to the one considered in \cite{X17}.\\
(iv) Recall that since $\mu_X$ is non-degenerate and self-decomposable, its L\'evy measure admits the following representation $\nu(du)=\psi(u)/|u|$, where $\psi$ is a positive function increasing on $(-\infty,0)$ and decreasing on $(0,+\infty)$ (\cite[Corollary $15.11$]{S}). Then, the probability measure $\mu_t$ (defined by \eqref{eq:defmut}) is infinitely divisible. Denoting by $\nu_t$ the L\'evy measure corresponding to $\mu_t$, one easily checks that
\begin{align*}
\nu_t(du)=\frac{\psi(u)-\psi(e^t u)}{|u|}du.
\end{align*}
\end{rem}

With the help of the previous proposition, we now wish to solve the Stein equation associated with 
the operator $\cala$.  More precisely, for any Lipschitz function or bounded Lipschitz function $h$, we wish  
to solve the following integro-differential equation
\begin{align}\label{eq:SteinEqua}
(\bbe X-x) f'(x)+\int_{-\infty}^{+\infty}\left(f'(x+u)-f'(x)\right)u\nu(du)=h(x)-\bbe h(X).
\end{align}
Using
 classical semigroup theory (\cite[Chapter $2$]{Par04} or \cite[Chapter $1$]{EK09}), the next step is to prove that
\begin{align*}
\int_0^{+\infty} \left(P^\nu_t(h)(x)-\bbe h(X)\right)dt
\end{align*}
is well defined when $h$ is a (bounded) Lipschitz function. Let $h$ be a continuously differentiable function on $\mathbb{R}$ such that $\|h\|_\infty\leq 1$ and $\|h'\|_\infty\leq 1$. Since
\begin{align*}
|P^\nu_t(h)(x)-\bbe h(X)|&=\left|\int_{-\infty}^{+\infty} h(y+e^{-t}x)d\mu_t(dy)-\int_{-\infty}^{+\infty} h(y)d\mu_X(y) \right|\\
&\leq \int_{-\infty}^{+\infty} |h(y+e^{-t}x)-h(y)|d\mu_t(y)+d_{W_1}(\mu_t,\mu_X)\\
&\leq e^{-t}|x|+d_{W_1}(\mu_t,\mu_X),
\end{align*}
we need to estimate the rate at which $\mu_t$ converges towards $\mu_X$ in smooth Wasserstein-1 distance. Based on \cite[Theorem $2$]{AMPS17}, we begin by estimating the rate at which $\mu_t$ converges towards $\mu_X$ in smooth Wasserstein-2 distance.

\begin{prop}\label{prop:estimate}
Let $X\sim ID(b,0,\nu)$ be non-degenerate self-decomposable with law $\mu_X$, characteristic function $\varphi$ and such that $\bbe|X|<\infty$. Let $X_t$, $t\geq 0$, be random variables each having a characteristic function
\begin{align}
\varphi_t(\xi)=\dfrac{\varphi(\xi)}{\varphi(e^{-t}\xi)},\quad\quad \xi\in\mathbb{R}.
\end{align}
Then,
\begin{align}\label{ineq:dw2exp}
d_{W_2}(X_t,X) \leq C e^{-\frac{2}{3}t},
\end{align}
for $t$ large enough and for some $C>0$ independent of $t$.
\end{prop}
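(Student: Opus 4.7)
The plan is to apply the Fourier-based smooth Wasserstein estimate \cite[Theorem 2]{AMPS17}, consistent with the methodology of Section~4: obtain a pointwise bound on $|\varphi_t-\varphi|$, uniform tail control on the family $(X_t)_{t\ge 0}$, and a cutoff-optimization producing the exponent $2/3$.

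First, I would derive a pointwise bound on the characteristic-function difference. Since $\varphi(\xi)=\varphi_t(\xi)\varphi(e^{-t}\xi)$ and $|\varphi_t|\le 1$, one has $|\varphi_t(\xi)-\varphi(\xi)|\le |1-\varphi(e^{-t}\xi)|$. The hypothesis $\bbe|X|<+\infty$ gives $\varphi\in C^1(\bbr)$ with $|\varphi'|\le \bbe|X|$, hence $|1-\varphi(e^{-t}\xi)|\le \bbe|X|\,e^{-t}|\xi|$, while trivially $|1-\varphi(e^{-t}\xi)|\le 2$. Together these yield the polynomial pointwise estimate $|\varphi_t(\xi)-\varphi(\xi)|\le C\,\min(1, e^{-t}|\xi|)$.

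Next, I would collect uniform moment information on $(X_t)_{t\ge 0}$. The factorization corresponds to the distributional identity $X\stackrel{d}{=}X_t+e^{-t}Y$ with $Y\stackrel{d}{=}X$ independent of $X_t$, from which $\bbe|X_t|\le 2\bbe|X|$ uniformly in $t\ge 0$, and, more generally, a uniform bound on $\bbe|X_t|^\beta$ for every $1\le \beta\le \beta^*$. Feeding these two ingredients into \cite[Theorem 2]{AMPS17} amounts to writing
\[
\bbe h(X_t)-\bbe h(X) = \frac{1}{2\pi}\int_{\bbr}\mathcal{F}(h)(\xi)\,(\varphi_t(\xi)-\varphi(\xi))\,d\xi
\]
for $h\in C_c^\infty\cap\mathcal{H}_2$, splitting the integral at a cutoff $|\xi|=T$, bounding the low-frequency piece by the pointwise estimate of the previous paragraph and the high-frequency piece through the moment control on $X_t$ combined with $h\in\mathcal{H}_2$, and finally optimizing $T$ in $t$. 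The balance between a low-frequency contribution of order $e^{-t}$ times a polynomial in $T$, and a high-frequency contribution of order $T^{-\gamma}$, forces $T\sim e^{t/3}$ and delivers the rate $e^{-2t/3}$ for $t$ large enough.

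The main obstacle is the applicability of \cite[Theorem 2]{AMPS17} under merely $\bbe|X|<+\infty$, since the analogous Fourier upgrades elsewhere in the paper (e.g.\ the remark following Theorem~\ref{thm4.1}) rely on (sub-)exponential moment assumptions; if a direct application fails, one would reprove the cutoff-balancing argument by hand, drawing only on the polynomial moment information afforded by $\beta^*$. As a sanity check, note that the distributional coupling $X\stackrel{d}{=}X_t+e^{-t}Y$ already gives $W_1(X_t,X)\le e^{-t}\bbe|X|$, hence $d_{W_2}(X_t,X)\le e^{-t}\bbe|X|$ unconditionally, which is sharper than the claimed $e^{-2t/3}$; the Fourier route is therefore chosen here for methodological coherence with Section~4 rather than for optimality of the exponent.
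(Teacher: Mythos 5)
Your proposal is correct and its main line is essentially the paper's: the same pointwise bound $|\varphi_t(\xi)-\varphi(\xi)|\le |1-\varphi(e^{-t}\xi)|\le \bbe|X|\,e^{-t}|\xi|$, a uniform first-moment bound on $(X_t)_{t\ge 0}$, and an appeal to \cite[Theorem 2]{AMPS17}. Your worry about applicability of that theorem under only $\bbe|X|<+\infty$ is unnecessary: Theorem 2 of \cite{AMPS17} (unlike Theorem 1, which is the one used elsewhere under sub-exponential moments) is stated under the polynomial-moment hypothesis (H2'), which the paper verifies with $\gamma=1$; this is exactly what produces the exponent $2/3$, so no hand-reproved cutoff argument is needed. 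Where you differ is in how the uniform moment bound is obtained: the paper decomposes $X_t$ into a shift plus small-jump and big-jump compensated parts and invokes an inequality from \cite{MR01} together with $\nu_t\le\nu$, whereas you use the self-decomposability identity $X=_d X_t+e^{-t}X'$ (with $X'=_d X$ independent of $X_t$) and the triangle inequality, which is simpler and equally valid. Your closing ``sanity check'' is also correct and worth emphasizing: that same identity furnishes an explicit coupling, so $d_{W_2}(X_t,X)\le d_{W_1}(X_t,X)\le W_1(X_t,X)\le e^{-t}\,\bbe|X|$ for all $t\ge 0$, by the ordering of distances recorded in the paper. This is both simpler and strictly sharper than the stated rate $Ce^{-2t/3}$ (and would also bypass Lemma \ref{lem:D1D2} in the derivation of \eqref{eq:EST}); the Fourier route used in the paper buys methodological uniformity with Section 4 but not optimality here.
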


\begin{proof}
Let $X_t$ be a random variable with law $\mu_t$ given by $\eqref{eq:defmut}$. Then, $X_t$ admits the following representation
\begin{align}
X_t=_{d}(1-e^{-t})\bbe X+X^1_t+X^2_t,
\end{align}
where $X^1_t$ and $X^2_t$ are independent and respectively defined, for all $\xi\in\mathbb{R}$ and for all $t>0$, via
\begin{align*}
\bbe e^{i \xi X^1_t}=\exp\int_{|u|\leq 1}\left(e^{i u \xi}-1-iu\xi\right)\nu_t(du),\quad\quad\bbe e^{i \xi X^2_t}=\exp\int_{|u|> 1}\left(e^{i u \xi}-1-iu\xi\right)\nu_t(du)
\end{align*}
with $\nu_t$ being the L\'evy measure of $X_t$. Moreover, from \cite[inequality $13$]{MR01}
\begin{align}
\bbe \left| X_t\right|\leq (1-e^{-t})\bbe \left|X\right|+\left(\int_{|u|\leq 1}|u|^2\nu_t(du)\right)^{\frac{1}{2}}+2\int_{|u|\geq 1}|u|\nu_t(du),
\end{align}
which implies, in particular, that $\sup_{t>0} \bbe |X_t|<+\infty$, since $\nu_t(B)\leq \nu(B)$ for all Borel sets $B$. Then, hypothesis (H2') of \cite[Theorem $2$]{AMPS17} is satisfied for $\gamma=1$. Let us next estimate the difference between $\varphi_t$ and $\varphi$ the respective characteristic functions of $\mu_t$ and $\mu_X$. Since
\begin{align*}
|\varphi_t(\xi)-\varphi(\xi)|&\leq \left|\frac{\varphi(\xi)}{\varphi(e^{-t}\xi)}\right| |1-\varphi(e^{-t}\xi)|\\
&\leq \bbe|X| |\xi| e^{-t},
\end{align*}
for $t$ large enough, \eqref{ineq:dw2exp} is a straightforward application of \cite[Theorem $2$]{AMPS17}.
\end{proof}
\noindent
To link the smooth-Wasserstein-$2$ distance to the smooth Wasserstein-$1$ distance, a further technical lemma is needed.

\begin{lem}\label{lem:D1D2}
Let $X$ and $Y$ be two random variables such that $d_{W_2}(X,Y)<1$. Then,
\begin{align}
d_{W_1}(X,Y)\leq C\sqrt{d_{W_2}(X,Y)}
\end{align}
for some constant $C>0$.
\end{lem}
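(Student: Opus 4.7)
The plan is to prove the inequality by a standard mollification/smoothing argument. Given any test function $h\in \mathcal{H}_1$, i.e.~$h$ is $C^1$ with $\|h\|_\infty\le 1$ and $\|h'\|_\infty \le 1$, I would approximate $h$ by a function $h_\varepsilon$ that is $C^2$ with controlled second derivative, use $d_{W_2}$ to estimate the difference of expectations of $h_\varepsilon$, and pay the approximation error $\|h-h_\varepsilon\|_\infty$ twice (once at $X$, once at $Y$). Optimizing in $\varepsilon$ will yield the square-root rate.

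More concretely, fix once and for all a nonnegative $\phi \in C^\infty_c(\mathbb{R})$ with $\int \phi = 1$, and set $\phi_\varepsilon(x) = \phi(x/\varepsilon)/\varepsilon$ for $\varepsilon>0$. Define $h_\varepsilon := h \ast \phi_\varepsilon$. Three properties are immediate: (a) since $\int \phi_\varepsilon = 1$ and $h$ is $1$-Lipschitz,
\[
\|h - h_\varepsilon\|_\infty \;\le\; \int |y|\,\phi_\varepsilon(y)\,dy \;=\; \varepsilon \int |u|\phi(u)\,du \;=\; C_1\,\varepsilon;
\]
(b) $\|h_\varepsilon\|_\infty \le \|h\|_\infty \le 1$ and $\|h_\varepsilon'\|_\infty = \|h'\ast \phi_\varepsilon\|_\infty \le 1$; (c) writing $h_\varepsilon'' = h' \ast \phi_\varepsilon'$ and using Young's inequality,
\[
\|h_\varepsilon''\|_\infty \;\le\; \|h'\|_\infty \,\|\phi_\varepsilon'\|_{L^1} \;=\; \frac{\|\phi'\|_{L^1}}{\varepsilon} \;=:\; \frac{C_2}{\varepsilon}.
\]
Hence, writing $M_\varepsilon := \max(1, C_2/\varepsilon)$, the rescaled function $h_\varepsilon/M_\varepsilon$ belongs to $\mathcal{H}_2$.

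By the triangle inequality and the definition of $d_{W_2}$,
\[
|\bbe h(X) - \bbe h(Y)| \;\le\; 2\|h-h_\varepsilon\|_\infty + |\bbe h_\varepsilon(X) - \bbe h_\varepsilon(Y)| \;\le\; 2 C_1 \varepsilon + M_\varepsilon\,d_{W_2}(X,Y).
\]
Since $d_{W_2}(X,Y) < 1$, one can choose $\varepsilon := \sqrt{d_{W_2}(X,Y)}$ small enough so that $C_2/\varepsilon \ge 1$ (adjusting $C_2$ if necessary, using that $d_{W_2}$ is bounded above by $1$), in which case $M_\varepsilon = C_2/\varepsilon$ and
\[
|\bbe h(X) - \bbe h(Y)| \;\le\; 2C_1\sqrt{d_{W_2}(X,Y)} + C_2\sqrt{d_{W_2}(X,Y)} \;\le\; C\sqrt{d_{W_2}(X,Y)}.
\]
Taking the supremum over $h \in \mathcal{H}_1$ gives the claim.

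The argument has essentially no obstacle, as it is a clean interpolation-by-mollification estimate; the only mildly delicate point is the bookkeeping at the boundary regime where $d_{W_2}(X,Y)$ is close to $1$, which is handled by the assumption $d_{W_2}(X,Y)<1$ and allows us to absorb $\max(1, C_2/\varepsilon) = C_2/\varepsilon$ into a single constant. One could alternatively invoke the general representation \eqref{eq:repcinc} to restrict to smooth compactly supported $h$ from the start, but this is not needed since the mollification $h_\varepsilon$ is smooth automatically.
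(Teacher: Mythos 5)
Your proposal is correct and follows essentially the same route as the paper: regularize the test function by convolution, use the bounds $\|h-h_\varepsilon\|_\infty\leq C\varepsilon$ and $\|h_\varepsilon''\|_\infty\leq C\varepsilon^{-1}$ to compare against $d_{W_2}$, and optimize with $\varepsilon\asymp\sqrt{d_{W_2}(X,Y)}$, using $d_{W_2}(X,Y)<1$ to absorb the normalization into the constant. The only cosmetic difference is that you make the rescaling into $\mathcal{H}_2$ explicit via $M_\varepsilon$, whereas the paper hides it in the factor $\max(2,C)$.
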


\begin{proof}
Let $h$ be a bounded Lipschitz function such that $\|h\|_\infty \leq 1$ and $\|h'\|_\infty\leq 1$. Let $h_\varepsilon$ be a regularization by convolution of $h$ with
\begin{align*}
&\|h_\varepsilon\|_\infty\leq 1,\quad\quad\quad \| h-h_\varepsilon\|_\infty \leq \varepsilon,\\
&\| h_\varepsilon \|_{Lip(1)} \leq 1,\quad\quad\quad \| h''_\varepsilon \|_{\infty} \leq C \varepsilon^{-1},
\end{align*}
for some constant $C>0$ only depending on the convoluting function. Taking $\varepsilon\in (0,C)$,
\begin{align*}
|\bbe h(X)-\bbe h(Y)| &\leq 2\varepsilon+ |\bbe h_\varepsilon(X)-\bbe h_\varepsilon(Y)|\\
&\leq \max(2,C)(\varepsilon+\varepsilon^{-1}d_{W_2}(X,Y)).
\end{align*}
Choosing $\varepsilon=C\sqrt{d_{W_2}(X,Y)}/(1+C)$ concludes the proof of the lemma.
%\begin{align}
%|\bbeh(X)-\bbeh(Y)| \leq C(\varepsilon+\varepsilon^{-1}d_{W_2}(X,Y))+d_{W_2}(X,Y)
%\end{align}
%for some $C>0$ independent on $\varepsilon$. Optimizing in $\varepsilon$ concludes the proof of the proposition.
\end{proof}

\begin{rem}\label{rem:D1BLip}
It is clear from the previous arguments, that
\begin{align*}
\underset{\underset{\|h\|_\infty\leq 1,\ \|h'\|_\infty\leq 1}{h\in C^1(\mathbb{R})}}{\sup}|\bbe h(X)-\bbe h(Y)|=\underset{h\in BLip(1)}{\sup}|\bbe h(X)-\bbe h(Y)|.
\end{align*}
\end{rem}
\noindent
Combining Proposition \ref{prop:estimate} together with Lemma \ref{lem:D1D2} yields,
\begin{align}\label{eq:EST}
|P^\nu_t(h)(x)-\bbe h(X)| \leq e^{-t}|x|+Ce^{-\frac{1}{3}t},
\end{align}
which implies that $\int_0^{+\infty} |P^\nu_t(h)(x)-\bbe h(X)|dt<+\infty$ and which ensures the existence of the function 
\begin{align}\label{eq:SteinSol}
f_h(x)=-\int_0^{+\infty} \left(P^\nu_t(h)(x)-\bbe h(X)\right)dt,\quad x\in\mathbb{R}.
\end{align}
Let us now study the regularity of $f_h$.

\begin{lem}\label{lem:regStein}
Let $h$ be a continuously differentiable function such that $\|h\|_\infty\leq 1$ and $\|h'\|_\infty\leq 1$. Then, $f_h$ is differentiable on $\mathbb{R}$ and $\| f'_h\|_\infty \leq 1$.
\end{lem}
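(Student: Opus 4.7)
The plan is to use the space representation \eqref{eq:SpaceRep} of the semigroup, which gives, for each $t>0$,
\[
P^\nu_t(h)(x) = \int_{-\infty}^{+\infty} h(u+e^{-t}x)\,\mu_t(du).
\]
Since $h$ is $C^1$ with $\|h'\|_\infty\leq 1$, the integrand is differentiable in $x$ with derivative $e^{-t}h'(u+e^{-t}x)$, uniformly bounded in $u$ by $e^{-t}$. Dominated convergence (with respect to the probability measure $\mu_t$) then yields that $P^\nu_t(h)$ is differentiable on $\mathbb{R}$ with
\[
\frac{d}{dx}P^\nu_t(h)(x) = e^{-t}\int_{-\infty}^{+\infty} h'(u+e^{-t}x)\,\mu_t(du) = e^{-t}P^\nu_t(h')(x),
\]
and consequently $\bigl|\tfrac{d}{dx}P^\nu_t(h)(x)\bigr|\leq e^{-t}$ for every $x\in\mathbb{R}$.

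Next I would justify differentiating under the outer integral in \eqref{eq:SteinSol}. The constant $\bbe h(X)$ does not depend on $x$, so the candidate for $f'_h(x)$ is
\[
-\int_0^{+\infty} e^{-t}P^\nu_t(h')(x)\,dt,
\]
and the integrand is bounded in absolute value by $e^{-t}$ uniformly in $x$, which is integrable on $(0,+\infty)$. A standard dominated convergence argument applied to the difference quotients $(P^\nu_t(h)(x+\eta)-P^\nu_t(h)(x))/\eta$, whose pointwise limit exists for each $t>0$ and which are bounded by $e^{-t}$ uniformly in $\eta$ (by the mean value theorem applied inside the $\mu_t$-integral), then shows that $f_h$ is differentiable and
\[
f'_h(x) = -\int_0^{+\infty} e^{-t}P^\nu_t(h')(x)\,dt.
\]

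From this representation the bound follows at once:
\[
|f'_h(x)| \leq \int_0^{+\infty} e^{-t}\,\|P^\nu_t(h')\|_\infty\,dt \leq \|h'\|_\infty \int_0^{+\infty} e^{-t}\,dt \leq 1,
\]
where we used that $P^\nu_t$ is a contraction on $C_b(\mathbb{R})$ (as follows from \eqref{eq:SpaceRep} since $\mu_t$ is a probability measure). There is no genuine obstacle here; the only point requiring a little care is verifying the integrability needed to swap derivative and integral, which is handled cleanly by the explicit factor $e^{-t}$ coming out of the chain rule in \eqref{eq:SpaceRep}.
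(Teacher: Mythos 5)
Your proof is correct and follows essentially the same route as the paper: differentiate the space representation \eqref{eq:SpaceRep} to get $\frac{d}{dx}P^\nu_t(h)(x)=e^{-t}\int h'(y+e^{-t}x)\mu_t(dy)$, then use the factor $e^{-t}$ to justify differentiating under the time integral and to bound $\|f_h'\|_\infty$ by $\int_0^{+\infty}e^{-t}dt=1$. The paper states this more tersely; you have merely filled in the dominated convergence details.
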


\begin{proof}
Since
\begin{align*}
\frac{d}{dx}\left(P^\nu_t(h)(x)\right)=e^{-t}\int_{-\infty}^{+\infty}h'(xe^{-t}+y)\mu_t(dy),
\end{align*}
it is clear that $f_h$ is differentiable and that $\| f'_h\|_\infty \leq 1$.
\end{proof}
\noindent
When $h$ is a bounded Lipschitz function, the existence of higher order derivatives for $f_h$ is linked to the behavior at $\pm\infty$ of the characteristic function of the probability measure $\mu_t$ as well as to heat kernel estimates for the density of $\mu_t$. To illustrate these ideas, let us provide some examples.\\
\\
\noindent
(i) Let $X$ be a gamma random variable with parameters $(\alpha,1)$. Then, for all $\xi\in \mathbb{R}$,
\begin{align*}
\left|\frac{\varphi(\xi)}{\varphi(e^{-t}\xi)}\right|= \left(\dfrac{1+e^{-2t}\xi^2}{1+\xi^2}\right)^{\frac{\alpha}{2}},
\end{align*}
is a decreasing function of $\xi$ on $\mathbb{R}_+$ and thus,
\begin{align*}
e^{-\alpha t}\leq \left|\frac{\varphi(\xi)}{\varphi(e^{-t}\xi)}\right|\leq 1.
\end{align*}
Similar upper and lower bounds hold for more general probability laws pertaining to the second Wiener chaos.\\
\noindent
(ii) Let $X$ be a Dickman random variable. Then, for all $\xi\in \mathbb{R}$,
\begin{align*}
\frac{\varphi(\xi)}{\varphi(e^{-t}\xi)}=\exp\left(\int_0^1 e^{iu \xi}\dfrac{1-e^{i u\xi(e^{-t}-1)}}{u}du\right).
\end{align*}
Using standard asymptotic expansion for the cosine integral \cite[Formulae $6.12.3$, $6.12.4$ and $6.2.20$, Chapter $6$]{OLBC10}, for all $\xi\in \mathbb{R}$,
\begin{align*}
C_1 \dfrac{1+|\xi| e^{-t}}{1+|\xi|} \leq\left|\frac{\varphi(\xi)}{\varphi(e^{-t}\xi)}\right| \leq C_2 \dfrac{1+|\xi| e^{-t}}{1+|\xi|},
\end{align*}
for some $C_1>0,C_2>0$ not depending on $t$.\\
\noindent
(iii) Let $X$ be a $S\alpha S$ random variable with $\alpha\in (1,2)$. Then, for all $\xi\in \mathbb{R}$,
\begin{align*}
\frac{\varphi(\xi)}{\varphi(e^{-t}\xi)}= e^{-(1-e^{-\alpha t})|\xi|^\alpha}.
\end{align*}
By Fourier inversion, $\mu_t$ admits a smooth density $q$ such that, for all $k\geq 0$,
\begin{align*}
q^{(k)}(t,y)=\frac{1}{2\pi}\int_{-\infty}^{+\infty} \left(\overline{\frac{\varphi(\xi)}{\varphi(e^{-t}\xi)}}\right)e^{iy \xi}(i \xi)^kd\xi.
\end{align*}
Moreover, denoting by $p_\alpha(t,y)$ the probability transition density function of a one dimensional $\alpha$-stable L\'evy process and noting that $q(t,y)=p_\alpha(1-e^{-\alpha t},y)$,
\begin{align*}
|q'(t,y)| \leq C \dfrac{1-e^{-\alpha t}}{\left((1-e^{-\alpha t})^{\frac{1}{\alpha}}+|y|\right)^{\alpha+2}},
\end{align*}
thanks to Lemma 2.2 of \cite{CZ16}. Following the proof of Proposition 4.3 in \cite{X17}, this implies that the function $f_h$ admits a second order derivative uniformly bounded such that
\begin{align*}
\|f''_h\|_\infty\leq C_\alpha \|h'\|_\infty,
\end{align*}
for some constant $C_\alpha>0$, only depending on $\alpha$.\\
\\
The previous examples point out that for some specific target laws the semigroup solution to the Stein equation \eqref{eq:SteinEqua} with $h$ a bounded Lipschitz function might not reach second order differentiability. Nevertheless, if $h$ is a $C^2(\mathbb{R})$ function such that $\|h\|_\infty+\|h'\|_\infty+\|h''\|_\infty\leq 1$, then $f_h$ is twice differentiable with 
\begin{align*}
\|f''_h\|_\infty\leq \frac{1}{2}.
\end{align*}
In this situation, it is possible to partially transform the non-local part of the Stein operator into an operator acting on the second derivative. This is the purpose of the next lemma whose proof is very similar to the one of Lemma $4.6$ of \cite{X17} and, as such, only sketched. 

\begin{lem}\label{lem:Ker}
Let $\nu$ be a L\'evy measure such that $\int_{|u|>1}|u|\nu(du)<+\infty$. Let $f$ be a real-valued 
differentiable function with Lipschitzian first derivative.  Then, for all $N>0$, and all $x\in\mathbb{R}$, 
\begin{align}
\int_{-\infty}^{+\infty} (f'(x+u)-f'(x))u\nu(du)=\int_{-N}^{+N} K_\nu(t,N) f''(x+t)dt+R_N(x), 
\end{align}
where $K_\nu(t,N)$ and $R_N(x)$ are respectively given by
\begin{align*}
K_\nu(t,N)&=\bbone_{[0, N]}(t)\int_{t}^N u\nu(du)+\bbone_{[-N,0]}(t)\int_{-N}^t (-u)\nu(du),\\
R_N(x)&=\int_{|u|>N} (f'(x+u)-f'(x))u\nu(du).
\end{align*}
\end{lem}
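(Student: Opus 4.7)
The plan is to split the domain of integration at $|u|=N$. The contribution from $|u|>N$ is, by definition, exactly $R_N(x)$, so nothing further is needed there. It remains to rewrite the finite part $\int_{|u|\le N}(f'(x+u)-f'(x))u\,\nu(du)$ in the desired form using a Fubini-type interchange.

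Since $f'$ is Lipschitz, $f''$ exists Lebesgue-almost everywhere and is bounded, say by $L$. For $u\in(0,N]$, one writes $f'(x+u)-f'(x)=\int_0^u f''(x+t)\,dt$, while for $u\in[-N,0)$, one writes $f'(x+u)-f'(x)=-\int_u^0 f''(x+t)\,dt$. Substituting and applying Fubini--Tonelli (valid because $|f''|\le L$ and $\int_{|u|\le N}u^2\nu(du)<+\infty$, since $\nu$ is a L\'evy measure) gives, for the positive part,
\begin{align*}
\int_0^N (f'(x+u)-f'(x))u\,\nu(du)&=\int_0^N u\,\nu(du)\int_0^u f''(x+t)\,dt\\
&=\int_0^N f''(x+t)\left(\int_t^N u\,\nu(du)\right)dt,
\end{align*}
and for the negative part,
\begin{align*}
\int_{-N}^0 (f'(x+u)-f'(x))u\,\nu(du)&=\int_{-N}^0 (-u)\,\nu(du)\int_u^0 f''(x+t)\,dt\\
&=\int_{-N}^0 f''(x+t)\left(\int_{-N}^t (-u)\,\nu(du)\right)dt.
\end{align*}

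Adding the two contributions and recognizing the bracketed $\nu$-integrals as $K_\nu(t,N)$ on $[0,N]$ and $[-N,0]$ respectively yields the announced identity. The only genuine point to verify is the applicability of Fubini--Tonelli, which follows from the bound $\int_0^N\!\int_0^u |f''(x+t)|\,dt\,u\,\nu(du)\le L\int_0^N u^2\nu(du)<+\infty$ and its symmetric analogue; everything else is bookkeeping. I do not anticipate any real obstacle, the argument being essentially identical to that of \cite[Lemma $4.6$]{X17} adapted to a general L\'evy measure with finite first moment away from the origin.
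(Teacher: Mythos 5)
Your argument is correct and follows essentially the same route as the paper's proof: split the integral at $|u|=N$, write the increment of $f'$ as an integral of $f''$ via the fundamental theorem of calculus, and interchange the order of integration to obtain the kernel $K_\nu(t,N)$. The only difference is that you make explicit the almost-everywhere differentiability of the Lipschitz function $f'$ and the Fubini justification, which the paper leaves implicit.
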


\begin{proof}
For $N>0$ and $x\in\mathbb{R}$,
\begin{align}\label{eq:decomp}
\int_{-\infty}^{+\infty} (f'(x+u)-f'(x))u\nu(du)=&\int_0^N(f'(x+u)-f'(x))u\nu(du) \nonumber \\ 
&\qquad +\int_{-N}^0(f'(x+u)-f'(x))u\nu(du) +R_N(x).
\end{align}
For the first term on the right-hand side of \eqref{eq:decomp}, 
\begin{align}\label{eq:pospa}
\int_0^N(f'(x+u)-f'(x))u\nu(du)&=\int_0^N \left(\int_0^u f''(x+t)dt \right)u\nu(du)\nonumber\\
&=\int_0^N f''(x+t)\left(\int_{t}^Nu\nu(du)\right)dt,  
\end{align}
while similar computations for the second term lead to
\begin{align}\label{eq:negpa}
\int_{-N}^0(f'(x+u)-f'(x))u\nu(du)=\int_{-N}^0 f''(x+t)\left(\int_{-N}^t(-u)\nu(du)\right)dt.
\end{align}
Combining \eqref{eq:pospa} and \eqref{eq:negpa} gives
\begin{align*}
\int_{-\infty}^{+\infty} (f'(x+u)-f'(x))u\nu(du)=\int_{-N}^{+N} K_\nu(t,N) f''(x+t)dt+R_N(x).
\end{align*}
\end{proof}
\noindent
Next, we study the regularity properties of the non-local part of the Stein operator. For this purpose, denote it by
\begin{align}
\mathcal{T}(f)(x):=\int_{-\infty}^{+\infty} \left(f'(x+u)-f'(x)\right)u\nu(du).
\end{align}

\begin{prop}\label{lem:RegNL}
Let $\nu$ be a L\'evy measure such that $\int_{|u|>1}|u|\nu(du)<\infty$. Let $f$ be a twice continuously differentiable function such that $\|f'\|_\infty<+\infty$ and $\|f''\|_\infty<+\infty$.\\
(i) If $\int_{|u|\leq 1}|u|\nu(du)<+\infty$, then
\begin{align*}
\|\mathcal{T}(f)\|_{Lip}<+\infty.
\end{align*}
(ii) If $\int_{|u|\leq 1}|u|\nu(du)=+\infty$ and if there exist $\gamma,\beta>0$ and $C_1,C_2>0$ such that for $R>0$,
\begin{align*}
\int_{|u|>R}|u|\nu(du)\leq \frac{C_1}{R^\gamma},\quad\quad\quad \int_{|u|\leq R}|u|^2\nu(du)\leq C_2 R^\beta,
\end{align*}
then,
\begin{align*}
\underset{x\ne y}{\sup}\frac{\left|\mathcal{T}(f)(x)-\mathcal{T}(f)(y)\right|}{\left|x-y\right|^{\frac{\beta}{\beta+\gamma}}}<+\infty.
\end{align*}
\end{prop}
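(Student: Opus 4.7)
The basic idea is to exploit the finite-mean assumption on $\nu$ at infinity to dominate the ``large-jump'' part, and to use the uniform bound on $f''$ to control the ``small-jump'' part via the second moment of $\nu$ near the origin.

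For part (i), the key observation is that under $\int_{|u|\le 1}|u|\nu(du) < +\infty$ together with the standing hypothesis $\int_{|u|>1}|u|\nu(du) < +\infty$, the measure $|u|\nu(du)$ is finite on all of $\mathbb{R}$, and in particular $\int u\,\nu(du)$ is well-defined. I would then split
\begin{align*}
\mathcal{T}(f)(x) - \mathcal{T}(f)(y) &= \int_{-\infty}^{+\infty} \bigl( f'(x+u) - f'(y+u) \bigr) u\,\nu(du) \\
&\qquad - (f'(x) - f'(y))\int_{-\infty}^{+\infty} u\,\nu(du),
\end{align*}
and use $|f'(x+u)-f'(y+u)| \le \|f''\|_\infty |x-y|$ (and the same for $|f'(x)-f'(y)|$) to conclude that $\|\mathcal{T}(f)\|_{\mathrm{Lip}} \le 2\|f''\|_\infty \int_{\mathbb{R}} |u|\nu(du)$.

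For part (ii) the splitting above is no longer legitimate since $\int u\,\nu(du)$ need not exist. Instead, I would introduce a cutoff $R>0$ and decompose
\begin{align*}
\mathcal{T}(f)(x) &= \int_{|u|\le R}(f'(x+u)-f'(x))u\,\nu(du) + \int_{|u|>R}(f'(x+u)-f'(x))u\,\nu(du).
\end{align*}
For the small-jump part, writing $|f'(x+u)-f'(x)| \le \|f''\|_\infty |u|$ and using the moment assumption on $\int_{|u|\le R}|u|^2\nu(du) \le C_2 R^\beta$ controls the corresponding contribution to $|\mathcal{T}(f)(x)-\mathcal{T}(f)(y)|$ by $2 C_2 \|f''\|_\infty R^\beta$, uniformly in $x,y$. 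For the large-jump part, the tail hypothesis $\int_{|u|>R}|u|\nu(du) \le C_1/R^\gamma$ allows the same splitting strategy as in (i) but restricted to $|u|>R$, yielding a contribution bounded by $2 C_1 \|f''\|_\infty |x-y|/R^\gamma$.

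The final step is to optimize in $R$ the sum
\begin{equation*}
2 C_2 \|f''\|_\infty R^\beta + 2 C_1 \|f''\|_\infty \frac{|x-y|}{R^\gamma},
\end{equation*}
which is minimized by choosing $R = \kappa |x-y|^{1/(\beta+\gamma)}$ for an appropriate constant $\kappa = \kappa(\beta,\gamma,C_1,C_2)$, producing the desired H\"older exponent $\beta/(\beta+\gamma)$. The only subtle point I anticipate is verifying that the small-jump integral $\int_{|u|\le R}(f'(x+u)-f'(x))u\,\nu(du)$ is well defined as a single (absolutely convergent) integral even when $\int_{|u|\le 1}|u|\nu(du) = +\infty$; this follows from $|f'(x+u)-f'(x)||u| \le \|f''\|_\infty u^2$ together with the standing requirement that $\int (1\wedge u^2)\nu(du) < +\infty$, so the whole argument stays rigorous throughout.
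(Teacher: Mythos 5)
Your proposal is correct and takes essentially the same route as the paper: part (i) via the bound $2\|f''\|_\infty|x-y|\int_{\mathbb{R}}|u|\nu(du)$, and part (ii) via the cutoff decomposition at radius $R$, bounding the small-jump part by $2C_2\|f''\|_\infty R^\beta$ and the large-jump part by $2C_1\|f''\|_\infty|x-y|/R^\gamma$, then choosing $R\sim|x-y|^{1/(\beta+\gamma)}$ exactly as the paper does (the paper simply takes $R=|x-y|^{1/(\beta+\gamma)}$ rather than optimizing the constant). Your closing remark on the absolute convergence of the small-jump integral via $\|f''\|_\infty u^2$ and $\int(1\wedge u^2)\nu(du)<+\infty$ is a correct and welcome addition, though the paper leaves it implicit.
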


\begin{proof}
Let us start with (i). Let $x,y\in\mathbb{R}$, $x\ne y$. Then,
\begin{align*}
\left|\mathcal{T}(f)(x)-\mathcal{T}(f)(y)\right| &\leq \left|\int_{-\infty}^{+\infty}\left(f'(x+u)-f'(y+u)-f'(x)+f'(y)\right)u\nu(du)\right|\\
&\leq 2 \|f''\|_{\infty}|x-y| \int_{-\infty}^{+\infty} |u|\nu(du).
\end{align*}
which shows that 
\begin{align}
\|\mathcal{T}(f)\|_{Lip}\leq 2\int_{-\infty}^{+\infty}|u|\nu(du)\|f''\|_\infty.
\end{align}
Next, let us prove $(ii)$. Let $R>0$ and $x,y\in\mathbb{R}$, $x\ne y$. Then,
\begin{align*}
\left|\mathcal{T}(f)(x)-\mathcal{T}(f)(y)\right| &\leq \left|\int_{-\infty}^{+\infty}\left(f'(x+u)-f'(y+u)-f'(x)+f'(y)\right)u\nu(du)\right|\\
&\leq \int_{|u|\leq R} \left|f'(x+u)-f'(y+u)-f'(x)+f'(y)\right| |u|\nu(du)\\
&\quad+ \int_{|u|> R}\left|f'(x+u)-f'(y+u)-f'(x)+f'(y)\right| |u|\nu(du).\\
&\leq 2\|f''\|_\infty \left(\int_{|u|\leq R}|u|^2\nu(du)+|x-y| \int_{|u|> R}|u|\nu(du)\right)\\
&\leq 2C\|f''\|_\infty \left(R^\beta+\frac{|x-y|}{R^\gamma}\right).
\end{align*}
Choosing $R=|x-y|^{\frac{1}{\gamma+\beta}}$,
\begin{align*}
\left|\mathcal{T}(f)(x)-\mathcal{T}(f)(y)\right| &\leq 2C\|f''\|_\infty |x-y|^{\frac{\beta}{\beta+\gamma}}.
\end{align*}
This concludes the proof of the proposition.
\end{proof}

\begin{rem}\label{rem:RegNonLoc}
When $\nu$ is the L\'evy measure of a $S\alpha S$ probability distribution, the exponents $\gamma$
and $\beta$ are respectively equal to $\alpha-1$ and $2-\alpha$ and so $\beta/(\gamma+\beta)=2-\alpha$ which is exactly the right order of H\"olderian regularity needed for optimal approximation results as in \cite{X17}.
\end{rem}
\noindent
To end this section, we solve the Stein equation \eqref{eq:SteinEqua} for $h$ a real valued function infinitely differentiable with compact support and with $\|h\|_\infty\leq 1$, $\|h'\|_\infty\leq 1$ and $\|h''\|_\infty\leq 1$. This solution ensures, via the representation \eqref{eq:repcinc}, the existence of  quantitative bounds on the smooth Wasserstein-2 distance as will be shown in the results of the next section. Note also that, thanks to the Fourier approach in defining the semigroup $(P^\nu_t)_{t\geq 0}$, the function $f_h$ is a solution to the Stein equation \eqref{eq:SteinEqua} on the whole real line.

\begin{lem}\label{lem:SteinEqu}
Let $X\sim ID(b,0,\nu)$ be nondegenerate, self-decomposable and such that $\bbe |X|<\infty$. Let $h$ be a real valued function on $\mathbb{R}$ belonging to $C_c^{\infty}(\mathbb{R})$ such that $\|h\|_\infty\leq 1$, $\|h'\|_\infty\leq 1$ and $\|h''\|_\infty\leq 1$. Let $f_h$ be the function given by \eqref{eq:SteinSol}. Then, for all $x\in \mathbb{R}$,
\begin{align}\label{eq:SteinEq2}
(\bbe X-x) f_h'(x)+\int_{-\infty}^{+\infty}\left(f_h'(x+u)-f_h'(x)\right)u\nu(du)=h(x)-\bbe h(X).
\end{align}
\end{lem}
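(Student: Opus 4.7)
The strategy is to substitute the integral representation \eqref{eq:SteinSol} for $f_h$ directly into the left-hand side of \eqref{eq:SteinEq2} and recognize the resulting expression as $-\int_0^{+\infty}\partial_t P_t^\nu(h)(x)\,dt$, which telescopes to $h(x) - \bbe h(X)$ by the fundamental theorem of calculus and the ergodic property of $(P_t^\nu)_{t\ge 0}$ from Proposition \ref{lem:SG}.

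First I would establish that $f_h \in C^2(\mathbb{R})$ with bounded derivatives. Differentiating the spatial representation \eqref{eq:SpaceRep}, namely $P_t^\nu(h)(x) = \int h(e^{-t}x+y)\mu_t(dy)$, twice under the integral sign --- valid because $h \in C_c^\infty(\mathbb{R})$ has bounded $h'$ and $h''$, with each $x$-differentiation producing a factor $e^{-t}$ ensuring time-integrability --- yields
\begin{align*}
f_h'(x) = -\int_0^{+\infty} e^{-t} P_t^\nu(h')(x)\,dt, \qquad f_h''(x) = -\int_0^{+\infty} e^{-2t} P_t^\nu(h'')(x)\,dt,
\end{align*}
so that $\|f_h'\|_\infty \le \|h'\|_\infty$ (as in Lemma \ref{lem:regStein}) and $\|f_h''\|_\infty \le \|h''\|_\infty/2$. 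The hypotheses $\int_{|u|\le 1}u^2\nu(du) < \infty$ and $\int_{|u|>1}|u|\nu(du) < \infty$ then render the non-local term in \eqref{eq:SteinEq2} absolutely convergent, and combined with the bound $|P_t^\nu(h')(x+u) - P_t^\nu(h')(x)| \le \min\bigl(e^{-t}\|h''\|_\infty|u|,\, 2\|h'\|_\infty\bigr)$ they justify exchanging the $t$-integral with the $\nu$-integral via Fubini.

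Plugging $f_h'$ into the left-hand side of \eqref{eq:SteinEq2} and using the key identity $(P_t^\nu(h))'(x) = e^{-t} P_t^\nu(h')(x)$, the factors $e^{-t}$ cancel and the bracket reassembles into the generator of Proposition \ref{lem:SG} applied to $P_t^\nu(h)$:
\begin{align*}
(\bbe X-x)f_h'(x) + \int_{-\infty}^{+\infty}(f_h'(x+u)-f_h'(x))u\nu(du) = -\int_0^{+\infty} \mathcal{A}(P_t^\nu(h))(x)\,dt.
\end{align*}
Since $h \in C_c^\infty(\mathbb{R}) \subset \mathcal{S}(\mathbb{R})$, differentiating the Fourier representation of $P_t^\nu(h)$ in $t$ (using Lemmas \ref{lem:PointConv} and \ref{lem:PointBound} of the Appendix) matches the Fourier expression for $\mathcal{A}(P_t^\nu(h))$ derived in Proposition \ref{lem:SG}, establishing the pointwise forward equation $\partial_t P_t^\nu(h)(x) = \mathcal{A}(P_t^\nu(h))(x)$. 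The fundamental theorem of calculus together with the ergodic limit $P_T^\nu(h)(x) \to \bbe h(X)$ as $T \to \infty$ (Proposition \ref{lem:SG}) then produces $-\int_0^{+\infty}\partial_t P_t^\nu(h)(x)\,dt = h(x) - \bbe h(X)$, which is \eqref{eq:SteinEq2}.

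The main obstacle is the pointwise identity $\mathcal{A}(P_t^\nu(h))(x) = \partial_t P_t^\nu(h)(x)$ for every fixed $t > 0$: this does not follow immediately from the $L^p$-semigroup statement of Proposition \ref{lem:SG} and has to be verified by directly differentiating in $t$ the Fourier representation of $P_t^\nu(h)$ and applying dominated convergence (the Schwartz decay of $\mathcal{F}(h)$ together with the moment hypotheses on $\nu$ supply the required domination). Once this pointwise forward equation is in hand, the remaining Fubini exchanges follow from the estimates above.
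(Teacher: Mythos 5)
Your proof is correct, but it follows a genuinely different route from the paper's. You argue pointwise throughout: substitute the formula \eqref{eq:SteinSol} into the left-hand side, use the bounds $\|f_h'\|_\infty\le\|h'\|_\infty$, $\|f_h''\|_\infty\le\tfrac12\|h''\|_\infty$ and the estimate $|P^\nu_t(h')(x+u)-P^\nu_t(h')(x)|\le\min\bigl(e^{-t}\|h''\|_\infty|u|,2\|h'\|_\infty\bigr)$ to justify Fubini, identify the integrand with $\mathcal{A}(P^\nu_t(h))(x)$ via $(P^\nu_t(h))'(x)=e^{-t}P^\nu_t(h')(x)$, and then close the argument with the pointwise forward equation $\partial_t P^\nu_t(h)(x)=\mathcal{A}(P^\nu_t(h))(x)$, which you correctly flag as the step needing a direct $t$-differentiation of the Fourier representation (the appendix lemmas literally cover only $t\to0^+$, but the same dominations work for every $t>0$ since $|\varphi(\xi)/\varphi(e^{-t}\xi)|\le 1$, so this is a routine extension). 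The paper instead works in a weak formulation: it establishes $\tfrac{d}{dt}\langle P^\nu_t(h);\psi\rangle=\langle\mathcal{A}(P^\nu_t(h));\psi\rangle$ for Schwartz test functions $\psi$, integrates in $t$, computes the two boundary limits by dominated convergence, interchanges integrals to recognize $-\langle\mathcal{A}(f_h);\psi\rangle$, and only at the very end upgrades the resulting distributional identity $\langle\mathcal{A}(f_h)+\bbe h(X)-h;\psi\rangle=0$ to the pointwise equation \eqref{eq:SteinEq2} by testing against Gaussian mollifiers $\psi_\varepsilon$ concentrating at $x$ and invoking the continuity of $\mathcal{A}(f_h)$ and $h$. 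Your approach buys a more direct and self-contained derivation with no distributional detour and no final mollification, at the price of verifying the pointwise Kolmogorov forward equation and the Fubini exchange between the $t$- and $u\nu(du)$-integrals yourself; the paper's approach reuses the Fourier computation of Proposition \ref{lem:SG} only in weak form and concentrates all pointwise regularity issues in one mollification step. Both arguments rest on essentially the same domination estimates, so the proposal is a valid alternative proof.
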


\begin{proof}
Let $h\in C^{\infty}_c(\mathbb{R})$ such that $\|h\|_\infty\leq 1$, $\|h'\|_\infty\leq 1$ and $\|h''\|_\infty\leq 1$ and let $f_h$ be given by \eqref{eq:SteinSol}. Let $\hat{h}$ be defined by $\hat{h}=h-\bbe h(X)$. Let $\psi \in \mathcal{S}(\mathbb{R})$. Now, by Fourier arguments as in the proof of Proposition \ref{lem:SG}
\begin{align}\label{eq:weakheat}
\dfrac{d}{dt}\langle P^{\nu}_t(h);\psi\rangle=\langle \mathcal{A}(P^{\nu}_t(h));\psi\rangle,
\end{align}
where $\langle f;g \rangle=\int_{-\infty}^{+\infty}f(x)g(x)dx$. Then, integrating from $0$ to $+\infty$,
\begin{align}\label{eq:intweakform}
\int_0^{+\infty}\dfrac{d}{dt}\langle P^{\nu}_t(h);\psi\rangle dt=\int_0^{+\infty}\langle \mathcal{A}(P^{\nu}_t(h));\psi\rangle dt.
\end{align}
Let us deal with the left-hand side of equality \eqref{eq:intweakform}. By definition,
\begin{align*}
\int_0^{+\infty}\dfrac{d}{dt}\langle P^{\nu}_t(h);\psi\rangle dt=\underset{t\rightarrow+\infty}{\lim}\langle P^{\nu}_t(h);\psi\rangle-\underset{t\rightarrow 0^+}{\lim}\langle P^{\nu}_t(h);\psi\rangle.
\end{align*}
Straightforward applications of the dominated convergence theorem imply
\begin{align*}
\int_0^{+\infty}\dfrac{d}{dt}\langle P^{\nu}_t(h);\psi\rangle dt=\langle \bbe\,h(X)-h;\psi\rangle.
\end{align*}
To conclude we need to deal with the right hand side of \eqref{eq:intweakform}. First, note that $\mathcal{A}(P^{\nu}_t(h))=\mathcal{A}(P^{\nu}_t(\hat{h}))$. Then,
\begin{align*}
\int_0^{+\infty}\langle \mathcal{A}(P^{\nu}_t(h));\psi\rangle dt&=\langle \int_0^{+\infty}\mathcal{A}(P^{\nu}_t(\hat{h}))dt ;\psi\rangle\\
&=-\langle \mathcal{A}(f_h);\psi\rangle,
\end{align*}
where the interchange of integrals is justified by Fubini theorem. Then, for all $\psi\in \mathcal{S}(\mathbb{R})$,
\begin{align*}
\langle \mathcal{A}(f_h)+\bbe\,h(X)-h;\psi\rangle=0.
\end{align*}
Now, for $x\in \mathbb{R}$ and for $0<\varepsilon \leq 1$, consider the function $\psi_\varepsilon$ belonging to $\mathcal{S} (\mathbb{R})$ and defined by, for all $y\in \mathbb{R}$
\begin{align}
\psi_\varepsilon(y)=\dfrac{1}{\sqrt{2\pi}\varepsilon}\exp\left(-\frac{(x-y)^2}{2\varepsilon^2}\right).
\end{align}
Thanks to the dominated convergence theorem and the regularity of $f_h$ and $h$, it follows that
\begin{align*}
\underset{\varepsilon\rightarrow 0^+}{\lim}\langle \mathcal{A}(f_h)+\bbe\,h(X)-h;\psi_\varepsilon\rangle=\mathcal{A}(f_h)(x)+\bbe\,h(X)-h(x)=0,
\end{align*} 
finishing the proof of the lemma.

%Then, from \cite[Chapter $1$, Proposition $1.5$]{EK09}),
%\begin{align}
%P^\nu_t(\hat{h})-\hat{h}=\mathcal{A}\left(\int_0^t P^\nu_s(\hat{h})ds\right).
%\end{align}
% in $L^1(\mu_X)$. By \eqref{eq:EST},
%\begin{align}
%\underset{t\rightarrow +\infty}{\lim} \|P^\nu_t(\hat{h})\|_{L^1(\mu_X)}=0,\quad\quad \int_0^{+\infty} \|P^\nu_t(\hat{h})\|_{L^1(\mu_X)}dt<+\infty.
%\end{align}
%Therefore, $\int_0^t P^\nu_s(\hat{h})ds$ converges towards $\int_0^{+\infty} P^\nu_s(\hat{h})ds$ in $L^1(\mu_X)$ as $t\rightarrow +\infty$. But, since $\mathcal{A}$ is a closed operator \cite[Chapter $1$, Corollary $1.6$]{EK09}),
%\begin{align}
%\hat{h}=\mathcal{A}(f_h).
%\end{align}
 %in $L^1(\mu_X)$. The end of the proof of the lemma follows from the fact that $\mu_X$ is non-degenerate and self-decomposable and by a continuity argument.
\end{proof}

\section{Applications}\label{sec:AP}
Self-decomposable distributions appear naturally as limiting laws of sum of rather general independent random variables (see \cite{L37,Kh38}). In order to provide quantitative convergence estimates in such general weak limit theorems, let us introduce some notations. Let $(Z_k)_{k\geq 1}$ be a sequence of independent real-valued random variables, $(b_n)_{n\geq 1}$ be a sequence of strictly positive reals such that $b_n\rightarrow 0$ and $b_{n+1}/b_n\rightarrow 1$, $n\rightarrow +\infty$ and finally, let $(c_n)_{n\geq 1}$ be a sequence of reals. Set
\begin{align}\label{eq:Seq}
S_n=b_n\sum_{k=1}^n Z_k+c_n,
\end{align}
where $\{b_nZ_k,\, k=1,...,n,\ n\geq 1\}$ is assumed to be a null array, namely, if $\varphi_{b_nZ_k}$ is the characteristic function of $b_nZ_k$, then
\begin{align}
\underset{n\rightarrow +\infty}{\lim}\underset{1\leq k\leq n}{\max}\left| \varphi_{b_nZ_k}(t)-1\right|=0,
\end{align}
uniformly in $t$ on any compact subset of $\mathbb{R}$. Then, if $(S_n)_{n\geq 1}$ converges in law, the limit is self-decomposable (see \cite[Theorem $15.3$]{S}). Conversely, if the limiting law is self-decomposable, one can always find $(Z_k)_{k\geq 1}$, $(b_n)_{n\geq 1}$ and $(c_n)_{n\geq 1}$ as above such that $(S_n)_{n\geq 1}$ converges in law towards this limit. In the sequel, we quantitatively revisit this result with the help of the Stein methodology developed in the previous sections. First, we need the following straightforward lemma.

\begin{lem}\label{lem:Dec}
Let $(Z_k)_{k\geq 1}$, $(b_n)_{n\geq 1}$ and $(c_n)_{n\geq 1}$ be as above, let $\bbe|Z_k|<\infty$ for all $k\geq 1$ and let $f$ be Lipschitz. Then, for all $n\geq 1$,
\begin{align}
\bbe S_nf'(S_n)=\left(c_n+b_n\sum_{k=1}^n \bbe Z_k\right)\bbe f'(S_n)&+b_n\sum_{k=1}^n \bbe \bbone_{b_n|Z_k|\leq N}\tilde{Z}_k(f'(S_n)-f'(S_{n,k}))\nonumber\\
&+b_n\sum_{k=1}^n \bbe \bbone_{b_n|Z_k|> N}\tilde{Z}_k(f'(S_n)-f'(S_{n,k})),
\end{align}
with $S_{n,k}=S_n-b_nZ_k$, $\tilde{Z}_k=Z_k-\bbe Z_k$ and $N\geq 1$. Moreover, if $f$ is twice continuously differentiable with $\|f'\|_\infty< +\infty$ and $\|f''\|_\infty< +\infty$, then
\begin{align}
\bbe S_nf'(S_n)=\left(c_n+b_n\sum_{k=1}^n \bbe Z_k\right)\bbe f'(S_n)&+b_n\sum_{k=1}^n \bbe \bbone_{b_n|Z_k|> N}Z_k(f'(S_n)-f'(S_{n,k}))\nonumber\\
&+\sum_{k=1}^n\int_{-\infty}^{+\infty}\bbe K_k(t,N)f''(S_{n,k}+t)dt\nonumber\\
&-b_n\sum_{k=1}^n \bbe Z_k \bbe (f'(S_n)-f'(S_{n,k})),
\end{align}
with 
\begin{align}
K_k(t,N)=\bbe b_nZ_k\bbone_{b_n|Z_k|\leq N}(\bbone_{0\leq t\leq b_nZ_k}-\bbone_{b_nZ_k\leq t\leq 0}).
\end{align}
\end{lem}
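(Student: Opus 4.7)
The plan is to derive both identities by straightforward algebraic manipulation, using only the independence of the $Z_k$'s and the centering $\tilde Z_k := Z_k - \bbe Z_k$.

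For identity (i), I would expand
\[
\bbe S_n f'(S_n) = c_n \bbe f'(S_n) + b_n \sum_{k=1}^n \bbe Z_k f'(S_n),
\]
and for each $k$ split $Z_k = \tilde Z_k + \bbe Z_k$. Since $\tilde Z_k$ is centered and independent of $S_{n,k} = S_n - b_n Z_k$, one has $\bbe \tilde Z_k f'(S_{n,k}) = \bbe \tilde Z_k \cdot \bbe f'(S_{n,k}) = 0$, so that $\bbe \tilde Z_k f'(S_n) = \bbe \tilde Z_k (f'(S_n) - f'(S_{n,k}))$. Inserting the trivial partition $1 = \bbone_{b_n|Z_k|\le N} + \bbone_{b_n|Z_k|>N}$ inside this last expectation and summing over $k$ then yields the first identity.

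For identity (ii), I would further decompose $\tilde Z_k = Z_k - \bbe Z_k$ inside each of the two pieces just produced. The ``$Z_k$'' part of the large-increment piece gives the term $b_n \sum_k \bbe \bbone_{b_n|Z_k|>N} Z_k(f'(S_n) - f'(S_{n,k}))$ appearing in the statement, while the two ``$-\bbe Z_k$'' contributions recombine, using that the indicators sum to one, into the single correction term $-b_n\sum_k \bbe Z_k \, \bbe(f'(S_n) - f'(S_{n,k}))$. It remains to process the small-increment piece $b_n \bbe Z_k \bbone_{b_n|Z_k|\le N}(f'(S_n) - f'(S_{n,k}))$. For this, the fundamental theorem of calculus gives
\[
f'(S_n) - f'(S_{n,k}) = \int_{-\infty}^{+\infty} \bigl(\bbone_{0\le t\le b_n Z_k} - \bbone_{b_n Z_k\le t\le 0}\bigr)\, f''(S_{n,k}+t)\, dt,
\]
valid for both signs of $b_n Z_k$; multiplying by $b_n Z_k \bbone_{b_n|Z_k|\le N}$ and applying Fubini, the independence of $Z_k$ and $S_{n,k}$ then factors the $Z_k$-expectation onto the indicator prefactor, producing exactly the deterministic kernel $K_k(t,N)$ and giving the integral term in the statement.

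The whole argument is essentially careful bookkeeping and I do not anticipate any real obstacle. The two points deserving a moment's attention are the vanishing $\bbe \tilde Z_k f'(S_{n,k}) = 0$, which uses $\bbe|Z_k|<\infty$ together with the boundedness of $f'$ from the Lipschitz assumption, and the interchange of integration and expectation in the Taylor step, which is justified by the uniform bounds on $f''$ and on $b_n Z_k \bbone_{b_n|Z_k|\le N}$ (the latter bounded by $N$).
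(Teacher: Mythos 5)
Your proposal is correct and follows essentially the same route as the paper: the first identity comes from $\bbe \tilde{Z}_k f'(S_n)=\bbe \tilde{Z}_k(f'(S_n)-f'(S_{n,k}))$ via independence of $\tilde{Z}_k$ and $S_{n,k}$ together with $\bbe\tilde{Z}_k=0$, followed by the indicator splitting. For the second identity the paper only cites ``computations similar to \cite[Lemma 4.5]{X17} using Lemma \ref{lem:Ker}''; your explicit decomposition of $\tilde{Z}_k$, recombination of the $-\bbe Z_k$ terms, and the fundamental-theorem-of-calculus/Fubini step producing $K_k(t,N)$ is precisely that computation, so there is no gap.
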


\begin{proof}
For $f$ Lipschitz, the first part of the lemma follows from
\begin{align}
\bbe \tilde{Z}_kf'(S_n)=\bbe \tilde{Z}_k(f'(S_n)-f'(S_{n,k})),
\end{align}
which is valid for all $k\geq 1$, from the independence of  $\tilde{Z}_k$ and $S_{n,k}$, and since $\bbe \tilde{Z}_k=0$. For $f$ twice continuously differentiable with $\|f'\|_\infty< +\infty$ and $\|f''\|_\infty< +\infty$, the result follows by computations similar to the proof of \cite[Lemma $4.5$]{X17} using also Lemma \ref{lem:Ker}.
\end{proof}
\noindent
In the next theorem, let $X\sim ID (b,0,\nu)$ be non-degenerate, self-decomposable with law $\mu_X$ and such that $\bbe |X|<\infty$. Let further $(S_n)_{n\geq 1}$ be as in \eqref{eq:Seq} with $\bbe |Z_k|<+\infty$, for all $k\geq 1$. %Moreover, the following standing assumptions are in play:
%\begin{itemize}
%\item If $\int_{|u|\leq 1}|u|\nu(du)< +\infty$, then a.s. for all $n\geq 1$, $S_n$ belongs to the support of $\mu_X$ which is $[b_0,+\infty)$ or $(-\infty,b_0]$ or $\mathbb{R}$. 
%\item If $\int_{|u|\leq 1}|u|\nu(du)=+\infty$, then no further assumption is required.
%\end{itemize}
From the results of the previous section (Lemma \ref{lem:SteinEqu}), for any $h\in C^{\infty}_c(\mathbb{R})$ with $\|h\|_\infty\leq 1$, $\|h'\|_\infty\leq 1$ and $\|h''\|_\infty\leq 1$, 
\begin{align}\label{eq:STEIN}
\left|\bbe h(S_n)-\bbe h(X)\right|=\left|\bbe \left((\bbe X-S_n)f_h'(S_n)+\int_{-\infty}^{+\infty}\left(f_h'(S_n+u)-f_h'(S_n)\right)u\nu(du)\right)\right|.
\end{align}
The next theorem upperbounds \eqref{eq:STEIN}.

\begin{thm}\label{thm:GWLT}
(i) Let $\int_{|u|\leq 1}|u|\nu(du)< +\infty$. Then, for $n,N\geq 1$,
\begin{align*}
d_{W_2}(S_n,X)&\leq  \left|\bbe X-(c_n+b_n\sum_{k=1}^n \bbe Z_k)\right|+\frac{b_n}{n}\left(\sum_{k=1}^n\bbe |Z_k|\right)\left(\int_{-\infty}^{+\infty}|u|\nu(du)\right) +\frac{1}{2}b_n^2\sum_{k=1}^n|\bbe Z_k|\bbe |Z_k|\nonumber\\
&\quad+2\int_{|u|>N}|u|\nu(du)+2b_n\sum_{k=1}^n \bbe |Z_k|\bbone_{|b_nZ_k|>N}+\frac{1}{2}\sum_{k=1}^n\int_{-N}^{+N} \left| \frac{K_\nu(t,N)}{n}-K_k(t,N)\right|dt.
\end{align*}
(ii) Let $\int_{|u|\leq 1}|u|\nu(du)=+\infty$ and let there exist $\gamma>0,\beta>0$ and $C_1>0,C_2>0$ such that for $R>0$
\begin{align}
\int_{|u|>R}|u|\nu(du)\leq \frac{C_1}{R^\gamma},\quad\quad\quad \int_{|u|\leq R}|u|^2\nu(du)\leq C_2 R^\beta.
\end{align}
Then, for $n,N\geq 1$,
\begin{align*}
d_{W_2}(S_n,X)&\leq  \left|\bbe X-(c_n+b_n\sum_{k=1}^n \bbe Z_k)\right|+C_{\gamma,\beta}\frac{b_n^{\frac{\beta}{\beta+\gamma}}}{n}\left(\sum_{k=1}^n\bbe |Z_k|^{\frac{\beta}{\beta+\gamma}}\right) +\frac{1}{2}b_n^2\sum_{k=1}^n|\bbe Z_k|\bbe |Z_k|\nonumber\\
&\quad+2\int_{|u|>N}|u|\nu(du)+2b_n\sum_{k=1}^n \bbe |Z_k|\bbone_{|b_nZ_k|>N}+\frac{1}{2}\sum_{k=1}^n\int_{-N}^{+N} \left| \frac{K_\nu(t,N)}{n}-K_k(t,N)\right|dt,
\end{align*}
for some $C_{\gamma,\beta}>0$ only depending on $\gamma$ and $\beta$. 
\end{thm}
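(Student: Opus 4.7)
The plan is to combine the Stein methodology built in Section \ref{sec:SE} with the two decomposition tools already established (Lemmas \ref{lem:Dec} and \ref{lem:Ker}) and the regularity results for $\mathcal{T}(f_h)$ (Proposition \ref{lem:RegNL}). By the representation \eqref{eq:repcinc}, $d_{W_2}(S_n,X) = \sup_{h}|\bbe h(S_n)-\bbe h(X)|$, where the supremum is taken over $h\in C_c^{\infty}(\mathbb{R})$ satisfying the three $\mathcal{H}_2$-bounds. For each such $h$, Lemma \ref{lem:SteinEqu} supplies a solution $f_h$ of the Stein equation \eqref{eq:SteinEq2}, with $\|f_h'\|_\infty\le 1$ from Lemma \ref{lem:regStein} and $\|f_h''\|_\infty\le 1/2$ from the discussion preceding Lemma \ref{lem:Ker}. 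Consequently, $\bbe h(S_n)-\bbe h(X) = \bbe[(\bbe X-S_n)f_h'(S_n)]+\bbe \mathcal{T}(f_h)(S_n)$, and the whole task is to dissect this right-hand side.

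For the drift part $\bbe S_n f_h'(S_n)$ I would use the second form of Lemma \ref{lem:Dec} with the truncation level $N$, which produces four pieces: the ``centering'' term $[c_n+b_n\sum_k\bbe Z_k]\bbe f_h'(S_n)$; the large-jump remainder $b_n\sum_k\bbe \bbone_{b_n|Z_k|>N}Z_k(f_h'(S_n)-f_h'(S_{n,k}))$; the discrete kernel integral $\sum_k\int K_k(t,N)\bbe f_h''(S_{n,k}+t)\,dt$; and the mean-correction $b_n\sum_k\bbe Z_k\,\bbe(f_h'(S_n)-f_h'(S_{n,k}))$. For the non-local part I would \emph{not} apply Lemma \ref{lem:Ker} directly to $\mathcal{T}(f_h)(S_n)$; instead I would first use the ``flipping'' identity
\[
\bbe\mathcal{T}(f_h)(S_n)=\sum_{k=1}^n\frac{1}{n}\bbe\mathcal{T}(f_h)(S_{n,k}) + \sum_{k=1}^n\frac{1}{n}\bbe\bigl[\mathcal{T}(f_h)(S_n)-\mathcal{T}(f_h)(S_{n,k})\bigr],
\]
and then apply Lemma \ref{lem:Ker} to each $\mathcal{T}(f_h)(S_{n,k})$ with the same $N$. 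After recombining, the $f_h''(S_{n,k}+t)$ terms match up and leave the clean difference $K_\nu(t,N)/n - K_k(t,N)$.

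Collecting everything, I get six contributions to bound: (A) the drift bias, controlled by $|\bbe X-c_n-b_n\sum_k\bbe Z_k|\cdot\|f_h'\|_\infty$; (B) the large-jump piece, via $|f_h'(S_n)-f_h'(S_{n,k})|\le 2\|f_h'\|_\infty$; (C) the mean-correction piece, via $|\bbe(f_h'(S_n)-f_h'(S_{n,k}))|\le \|f_h''\|_\infty b_n\bbe|Z_k|$, producing the $b_n^2\sum|\bbe Z_k|\bbe|Z_k|/2$ term; (D) the kernel comparison $\sum_k\int(K_\nu(t,N)/n-K_k(t,N))\bbe f_h''(S_{n,k}+t)\,dt$, bounded by $\tfrac12\sum_k\int|K_\nu/n-K_k|\,dt$; (E) the $R_N$ remainder coming from the large-$|u|$ part of $\mathcal{T}$ on each $S_{n,k}$, bounded by $2\int_{|u|>N}|u|\nu(du)$; and (F) the ``flipping residual'' $\sum_k\frac{1}{n}\bbe[\mathcal{T}(f_h)(S_n)-\mathcal{T}(f_h)(S_{n,k})]$.

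The flipping residual (F) is where the two cases split and where I expect the main conceptual obstacle to lie. In case (i), Proposition \ref{lem:RegNL}(i) yields $\|\mathcal{T}(f_h)\|_{Lip}\le 2\|f_h''\|_\infty\int|u|\nu(du)\le\int|u|\nu(du)$, so $|\bbe[\mathcal{T}(f_h)(S_n)-\mathcal{T}(f_h)(S_{n,k})]|\le \int|u|\nu(du)\cdot b_n\bbe|Z_k|$, and summing against $1/n$ produces exactly the stated term $\frac{b_n}{n}\bigl(\sum_k\bbe|Z_k|\bigr)\int|u|\nu(du)$. In case (ii), the appropriate H\"older estimate from Proposition \ref{lem:RegNL}(ii), applied to the random displacement $b_n Z_k$, gives the desired exponent $\beta/(\beta+\gamma)$ and the constant $C_{\gamma,\beta}$. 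The chief difficulty is the placement of the flip in Step~3: without this trick one is left with a term of the form $\sum_k\int K_k(t,N)[\bbe f_h''(S_n+t)-\bbe f_h''(S_{n,k}+t)]\,dt$ whose natural bound would require control of $f_h'''$ that we do not possess; the flipping argument swaps this derivative-level mismatch for a $\mathcal{T}$-level mismatch, precisely fitting the Lipschitz/H\"older estimates on $\mathcal{T}(f_h)$ available through Proposition \ref{lem:RegNL}.
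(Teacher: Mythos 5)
Your proposal is correct and follows essentially the same route as the paper's proof: the same Stein-equation solution with $\|f_h'\|_\infty\le 1$, $\|f_h''\|_\infty\le \tfrac12$, the decomposition of Lemma \ref{lem:Dec}, the ``flip'' from $\mathcal{T}(f_h)(S_n)$ to $\tfrac1n\sum_k\mathcal{T}(f_h)(S_{n,k})$ controlled by Proposition \ref{lem:RegNL} (Lipschitz in case (i), H\"older with exponent $\beta/(\beta+\gamma)$ in case (ii)), and Lemma \ref{lem:Ker} applied at level $N$ to produce the kernel comparison $K_\nu/n-K_k$ and the $R_N$ remainder. The only difference is bookkeeping (you invoke the second form of Lemma \ref{lem:Dec} at the outset, while the paper introduces the truncation inside the term $I$), and your identification of the flip as the device that avoids any need for $f_h'''$ matches the paper's argument exactly.
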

\begin{proof}
Let us start with the proof of $(i)$. Assume that $\int_{|u|\leq 1}|u|\nu(du)< +\infty$. Let $h\in C^{\infty}_c(\mathbb{R})$ be such that $\|h\|_\infty\leq 1$, $\|h'\|_\infty\leq 1$ and $\|h''\|_\infty\leq 1$. Then,
\begin{align}\label{eq:FIRST}
\left|\bbe h(S_n)-\bbe h(X)\right|&=\left|\bbe \left((\bbe X-S_n)f_h'(S_n)+\int_{-\infty}^{+\infty}\left(f_h'(S_n+u)-f_h'(S_n)\right)u\nu(du)\right)\right|\nonumber\\
&\leq \left|\bbe \left(\bbe\, X -(c_n+b_n\sum_{k=1}^n \bbe Z_k)\right)f_h'(S_n) \right|\nonumber\\
&\quad+\left| \bbe\, b_n\sum_{k=1}^n \tilde{Z}_k(f'_h(S_{n,k}+Z_kb_n)-f'_h(S_{n,k}))-\mathcal{T}(f_h)(S_n)\right|\nonumber\\
&\leq \left|\bbe\, X-\left(c_n+b_n\sum_{k=1}^n \bbe Z_k\right)\right|+\left|\bbe \frac{1}{n}\sum_{k=1}^n \mathcal{T}(f_h)(S_{n,k})-\mathcal{T}(f_h)(S_n)\right|\nonumber\\
&\quad+\left| \bbe\, b_n\sum_{k=1}^n \tilde{Z}_k(f'_h(S_{n,k}+Z_kb_n)-f'_h(S_{n,k}))-\frac{1}{n}\sum_{k=1}^n \mathcal{T}(f_h)(S_{n,k})\right|\nonumber\\
&\leq \left|\bbe\, X-\left(c_n+b_n\sum_{k=1}^n \bbe Z_k\right)\right|+\frac{b_n}{n}\left(\int_{-\infty}^{+\infty}|u|\nu(du)\right)\sum_{k=1}^n\bbe |Z_k|\nonumber\\
&\quad+\left| \bbe\, b_n\sum_{k=1}^n \tilde{Z}_k(f'_h(S_{n,k}+Z_kb_n)-f'_h(S_{n,k}))-\frac{1}{n}\sum_{k=1}^n \mathcal{T}(f_h)(S_{n,k})\right|, 
\end{align}
where we have successively used Lemma \ref{lem:regStein} and Proposition \ref{lem:RegNL} (i). Next, we need to bound
\begin{align*}
I:=\left| \bbe b_n\sum_{k=1}^n \tilde{Z}_k(f'_h(S_{n,k}+Z_kb_n)-f'_h(S_{n,k}))-\frac{1}{n}\sum_{k=1}^n \mathcal{T}(f_h)(S_{n,k})\right|.
\end{align*}
First,
\begin{align*}
I&\leq \left| \bbe\, b_n\sum_{k=1}^n \bbe Z_k (f'_h(S_{n,k}+Z_kb_n)-f'_h(S_{n,k}))\right|\\
&\quad+\left| \bbe\, b_n\sum_{k=1}^n Z_k(f'_h(S_{n,k}+Z_kb_n)-f'_h(S_{n,k}))-\frac{1}{n}\sum_{k=1}^n \mathcal{T}(f_h)(S_{n,k})\right|\\
&\leq \frac{1}{2}b_n^2\sum_{k=1}^n|\bbe Z_k|\bbe |Z_k|+\left| \bbe b_n\sum_{k=1}^n Z_k(f'_h(S_{n,k}+Z_kb_n)-f'_h(S_{n,k}))-\frac{1}{n}\sum_{k=1}^n \mathcal{T}(f_h)(S_{n,k})\right|.
\end{align*}
Let $N\geq 1$. Now, from Lemma \ref{lem:Ker},
\begin{align*}
\frac{1}{n}\sum_{k=1}^n \mathcal{T}(f_h)(S_{n,k})=\frac{1}{n}\sum_{k=1}^n\left(\int_{-N}^{+N} K_\nu(t,N) f_h''(S_{n,k}+t)dt+R_N(S_{n,k})\right), 
\end{align*}
and moreover,
\begin{align*}
\frac{1}{n}\sum_{k=1}^n\bbe |R_N(S_{n,k})|\leq 2\int_{|u|>N}|u|\nu(du).  
\end{align*}
Therefore,
\begin{align*}
I&\leq \frac{1}{2}b_n^2\sum_{k=1}^n|\bbe Z_k|\bbe |Z_k|+2\int_{|u|>N}|u|\nu(du)+\bigg| \bbe b_n\sum_{k=1}^n Z_k(f'_h(S_{n,k}+Z_kb_n)-f'_h(S_{n,k}))\\
&\quad\qquad \qquad \qquad \qquad \qquad \qquad\quad -\frac{1}{n}\sum_{k=1}^n\int_{-N}^{+N} K_\nu(t,N) f_h''(S_{n,k}+t)dt \bigg|,
\end{align*}
and from Lemma \ref{lem:Dec},
\begin{align*}
\bbe b_n\sum_{k=1}^n Z_k(f'_h(S_{n,k}+Z_kb_n)-f'_h(S_{n,k}))&=b_n\sum_{k=1}^n \bbe \bbone_{b_n|Z_k|> N}Z_k(f_h'(S_{n,k}+Z_kb_n)-f_h'(S_{n,k}))\nonumber\\
&\qquad \qquad +\sum_{k=1}^n\int_{-\infty}^{+\infty}\bbe K_k(t,N)f_h''(S_{n,k}+t)dt.
\end{align*}
Then,
\begin{align}\label{eq:LAST}
I&\leq \frac{1}{2}b_n^2\sum_{k=1}^n|\bbe Z_k|\bbe |Z_k|+2\int_{|u|>N}|u|\nu(du)+2b_n\sum_{k=1}^n \bbe |Z_k|\bbone_{|b_nZ_k|>N}\nonumber\\
&\quad\quad\quad \qquad \qquad +\frac{1}{2}\sum_{k=1}^n\int_{-N}^{+N} \left| \frac{K_\nu(t,N)}{n}-K_k(t,N)\right|dt.
\end{align}
Combining \eqref{eq:FIRST} and \eqref{eq:LAST} proves part $(i)$. For the proof of $(ii)$, proceed 
in a similar way.
\end{proof}

\begin{rem}\label{rem:Bound}
(i) Using Lemma \ref{lem:D1D2}, it is possible to transfer the bounds on $d_{W_2}(S_n,X)$ to bounds on $d_{W_1}(S_n,X)$.\\
(ii) The approach just developed generalizes the methodology developed for the $S\alpha S$ distribution in \cite{X17}. Note that in this case, due to regularizing properties of the probability transition density function of the one dimensional $\alpha$-stable L\'evy process, it is possible to obtain quantitative upper bounds in Wasserstein-1 distance. At the level of the semigroup solution to the Stein equation, this can be viewed by a gain of one order of differentiability with a test function $h$ only Lipschitz. In this regard, Theorem \ref{thm:GWLT} can be compared with Theorem 2.1 of \cite{X17} (from which several quantitative convergence results follow).\\
(iii) As first noticed in \cite{X17}, the quantity 
\begin{align*}
\frac{1}{2}\sum_{k=1}^n\int_{-N}^{+N} \left| \frac{K_\nu(t,N)}{n}-K_k(t,N)\right|dt,
\end{align*}
is the $L^1$ analog of the $L^2$ Stein discrepancy considered e.g. in \cite{Ch08,Ch09,NP09,LNP15}. As such, it seems to be the appropriate Stein quantity to deal with sums of independent summands.
\end{rem}
\noindent
To illustrate the abstract bounds obtained in the previous theorem, we consider the important case where the sequence of random variables, $(Z_k)_{k\geq 1}$, is a sequence of independent and identically distributed random variables such that $\bbe |Z_k|<+\infty$, $k\geq 1$. In this situation, thanks to \cite[Theorem 15.7]{S}, the limiting self-decomposable law is actually stable. Recall that an infinitely divisible probability measure, $\mu$, is stable if, for any $a>0$, there are $b>0$ and $c\in \mathbb{R}$ such that, for all $t\in \mathbb{R}$
\begin{align}
\varphi(t)^a=\varphi(b t)e^{i c t},
\end{align}
where $\varphi$ is the characteristic function of $\mu$. Also, by \cite[Theorem 14.3(ii)]{S}, any non-degenerate stable distribution with index $\alpha\in (0,2)$ has a L\'evy measure given by \eqref{StableLM}. Note finally that, when $\nu$ is given by \eqref{StableLM} with $\alpha\in (1,2)$, $\int_{|u|\leq 1}|u|\nu(du)=+\infty$ and that, for all $R>0$,
\begin{align}
\int_{|u|>R}|u|\nu(du)=\frac{c_1+c_2}{\alpha-1}\frac{1}{R^{\alpha-1}},\quad\quad\quad \int_{|u|\leq R}|u|^2\nu(du)=\frac{c_1+c_2}{2-\alpha}R^{2-\alpha}.
\end{align}

\begin{cor}\label{cor:generalstable}
Let $X$ be a non-degenerate stable random variable with index $\alpha\in (1,2)$ and L\'evy measure given by \eqref{StableLM}. Let $(S_n)_{n\geq 1}$ be as in \eqref{eq:Seq} with $(Z_k)_{k\geq 1}$ independent and identically distributed and with $\bbe |Z_1|<+\infty$. Then, for $n,N\geq 1$,
\begin{align}\label{eq:genstablecase}
d_{W_2}(S_n,X)&\leq  \left|\bbe X-(c_n+n b_n \bbe Z_1)\right|+C_{\alpha}(b_n)^{2-\alpha}\bbe\, |Z_1|^{2-\alpha} +\frac{n}{2}b_n^2|\bbe Z_1|\bbe |Z_1|\nonumber\\
&\quad+2\frac{c_1+c_2}{\alpha-1}\frac{1}{N^{\alpha-1}}+2nb_n \bbe |Z_1|\bbone_{|b_nZ_1|>N}+\frac{1}{2}\int_{-N}^{+N} \left| K_\nu(t,N)-nK_1(t,N)\right|dt,
\end{align}
where $C_{\alpha}>0$ only depends on $\alpha$, $c_1$ and $c_2$, and, for all $t\in \mathbb{R}$, $t\ne 0$ and $N\geq 1$, $n\geq 1$
\begin{align*}
&K_\nu(t,N)=c_1\bbone_{[0, N]}(t)\dfrac{t^{\alpha-1} -N^{\alpha-1}}{\alpha-1}+c_2\bbone_{[-N,0]}(t)\dfrac{N^{1-\alpha}-(-t)^{1-\alpha}}{1-\alpha},\\
&K_1(t,N)=\bbe b_nZ_1\bbone_{b_n|Z_1|\leq N}(\bbone_{0\leq t\leq b_nZ_1}-\bbone_{b_nZ_1\leq t\leq 0}).
\end{align*}
\end{cor}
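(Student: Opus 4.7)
The plan is to obtain Corollary~\ref{cor:generalstable} as a direct specialization of Theorem~\ref{thm:GWLT}(ii) to the iid stable setting, so the task is essentially bookkeeping rather than introducing new ideas.

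First, I would verify that the stable L\'evy measure $\nu$ in \eqref{StableLM} with $\alpha\in(1,2)$ satisfies the two tail hypotheses of Theorem~\ref{thm:GWLT}(ii). Elementary power-function integrations give
\begin{align*}
\int_{|u|>R}|u|\nu(du)=\frac{c_1+c_2}{\alpha-1}\,\frac{1}{R^{\alpha-1}},\qquad \int_{|u|\le R}u^2\nu(du)=\frac{c_1+c_2}{2-\alpha}\,R^{2-\alpha},
\end{align*}
so the assumption holds with $\gamma=\alpha-1$, $\beta=2-\alpha$, $C_1=(c_1+c_2)/(\alpha-1)$ and $C_2=(c_1+c_2)/(2-\alpha)$. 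In particular, the exponent $\beta/(\beta+\gamma)$ in Theorem~\ref{thm:GWLT}(ii) equals $2-\alpha$, which is the exponent appearing in the second term of \eqref{eq:genstablecase}. Moreover, since $\int_{|u|\le 1}|u|\nu(du)=+\infty$ for $\alpha\in(1,2)$, part (ii) is indeed the correct part of Theorem~\ref{thm:GWLT} to invoke, and the constant $C_\alpha$ in \eqref{eq:genstablecase} is the constant $C_{\gamma,\beta}$ from that theorem, which in turn is a function of $\alpha$, $c_1$ and $c_2$ through $\gamma$, $\beta$, $C_1$ and $C_2$ (as one sees by inspection of the proof of Proposition~\ref{lem:RegNL}(ii)).

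Second, I would exploit the iid assumption to collapse every sum over $k$ appearing in the bound of Theorem~\ref{thm:GWLT}(ii). Writing $Z:=Z_1$, each summand depends on $k$ only through the common law of $Z_k$, hence
\begin{align*}
\sum_{k=1}^n \bbe Z_k=n\bbe Z,\ \ \sum_{k=1}^n \bbe|Z_k|^{2-\alpha}=n\bbe|Z|^{2-\alpha},\ \ \sum_{k=1}^n|\bbe Z_k|\bbe|Z_k|=n|\bbe Z|\bbe|Z|,
\end{align*}
and similarly $\sum_{k=1}^n \bbe|Z_k|\bbone_{|b_nZ_k|>N}=n\bbe|Z|\bbone_{|b_nZ|>N}$. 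By the definition of $K_k(t,N)$ in Lemma~\ref{lem:Dec}, $K_k(t,N)=K_1(t,N)$ for every $k$, so the final integral term reduces to $(1/2)\int_{-N}^{+N}|K_\nu(t,N)-nK_1(t,N)|\,dt$. Substituting these simplifications into the bound from Theorem~\ref{thm:GWLT}(ii) yields \eqref{eq:genstablecase} once one also uses the already-computed value $\int_{|u|>N}|u|\nu(du)=(c_1+c_2)/((\alpha-1)N^{\alpha-1})$ in the fourth term.

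Third, to make the bound concrete I would evaluate $K_\nu(t,N)$ explicitly. For $t\in[0,N]$, the definition gives $K_\nu(t,N)=c_1\int_t^N u^{-\alpha}du$, whose elementary integration produces the stated expression; the case $t\in[-N,0]$ is symmetric and uses $c_2$. There is no genuine obstacle here: the corollary is a worked example of Theorem~\ref{thm:GWLT}(ii), and the only point requiring minor care is that the constant $C_\alpha$ be tracked as depending only on $\alpha$, $c_1$ and $c_2$, which follows from the explicit form of $C_1$, $C_2$ above.
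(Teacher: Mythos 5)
Your proposal is correct and follows essentially the same route as the paper: the corollary is obtained by applying Theorem~\ref{thm:GWLT}(ii) with $\gamma=\alpha-1$, $\beta=2-\alpha$ (the stated tail integrals for the stable L\'evy measure), collapsing the sums using the iid assumption, and computing $K_\nu(t,N)$ explicitly via Lemma~\ref{lem:Ker}. The only difference is that you spell out the bookkeeping (including the tracking of $C_\alpha$) that the paper leaves implicit, which is fine.
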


\begin{proof}
This is a direct application of (ii) Theorem \ref{thm:GWLT} ($\beta=2-\alpha$ and $\gamma=\alpha-1$) together with the fact that the random variables $Z_k$ are identically distributed. Moreover, thanks to \eqref{StableLM} with $\alpha\in (1,2)$ and to Lemma \ref{lem:Ker}, for all $t\in \mathbb{R}$, $t\ne 0$, and for all $N\geq 1$
\begin{align}
K_\nu(t,N)=c_1\bbone_{[0, N]}(t)\dfrac{t^{\alpha-1} -N^{\alpha-1}}{\alpha-1}+c_2\bbone_{[-N,0]}(t)\dfrac{N^{1-\alpha}-(-t)^{1-\alpha}}{1-\alpha}.
\end{align}
This concludes the proof of the corollary.
\end{proof}

\begin{rem}\label{rem:domainattraction}
By assuming further properties on the tails behavior of the law of $Z_1$ as done, for example in \cite[Theorem $2.6$ and Corollary $2.7$]{X17}, it is possible to extract explicit rates of convergence from the right-hand side of \eqref{eq:genstablecase}. For further details, we refer the reader to the proofs of \cite[Theorem $2.6$ and Corollary $2.7$]{X17}.
\end{rem}
\noindent
Finally, as a last example, let $(Z_{n,k})_{1\leq k\leq n,\, n\geq 1}$ be a null array (see e.g. \cite[Definition $9.2$]{S}), namely, for each fixed $n\geq 1$, $Z_{n,1},...,Z_{n,n}$ are independent random variables and, for all $\epsilon>0$
\begin{align}
\underset{n\rightarrow +\infty}{\lim}\underset{1\leq k\leq n}{\max} \mathbb{P}\left(|Z_{n,k}|>\epsilon \right)=0.
\end{align}
Let $(S_n)_{n\geq 1}$ be the sequence of row sums associated with this triangular array, namely, for all $n\geq 1$
\begin{align}\label{eq:rowsum}
S_n=\sum_{k=1}^n Z_{n,k}.
\end{align}
By a result of Khintchine (see e.g. \cite[Theorem $9.3$]{S}), if for some $c_n\in \mathbb{R}$, for all $n\geq 1$, the distribution of $S_n+c_n$ converges in distribution to $X$ with distribution $\mu$, then $\mu$ is infinitely divisible. Thus, by a straightforward adaptation of the proofs of Lemma \ref{lem:Dec} and Theorem \ref{thm:GWLT}, the following result holds.

\begin{thm}\label{thm:triangulararray}
Let $X\sim ID (b,0,\nu)$ be non-degenerate, self-decomposable with law $\mu_X$ and such that $\bbe |X|<\infty$. Let $(S_n)_{n\geq 1}$ be given by \eqref{eq:rowsum} such that $\bbe |Z_{n,k}|<\infty$, for all $n\geq 1$, for all $k=1,...,n$ and let $c_n\in \mathbb{R}$, for all $n\geq 1$.\\
(i) Let $\int_{|u|\leq 1}|u|\nu(du)< +\infty$. Then, for $n,N\geq 1$,
\begin{align*}
d_{W_2}(S_n+c_n,X)&\leq  \left|\bbe X-(c_n+\sum_{k=1}^n \bbe Z_{n,k})\right|+\frac{1}{n}\left(\sum_{k=1}^n\bbe |Z_{n,k}|\right)\left(\int_{-\infty}^{+\infty}|u|\nu(du)\right) +\frac{1}{2}\sum_{k=1}^n|\bbe Z_{n,k}|\bbe |Z_{n,k}|\nonumber\\
&\quad+2\int_{|u|>N}|u|\nu(du)+2\sum_{k=1}^n \bbe |Z_{n,k}|\bbone_{|Z_{n,k}|>N}+\frac{1}{2}\sum_{k=1}^n\int_{-N}^{+N} \left| \frac{K_\nu(t,N)}{n}-K_k(t,N)\right|dt,
\end{align*}
where, for all $t\in \mathbb{R}$, for all $N\geq 1$ and for all $1\leq k\leq n$
\begin{align}
K_k(t,N)=\bbe Z_{n,k}\bbone_{|Z_{n,k}|\leq N}(\bbone_{0\leq t\leq Z_{n,k}}-\bbone_{Z_{n,k}\leq t\leq 0}).
\end{align}
(ii) Let $\int_{|u|\leq 1}|u|\nu(du)=+\infty$ and let there exist $\gamma>0,\beta>0$ and $C_1>0,C_2>0$ such that for $R>0$
\begin{align}
\int_{|u|>R}|u|\nu(du)\leq \frac{C_1}{R^\gamma},\quad\quad\quad \int_{|u|\leq R}|u|^2\nu(du)\leq C_2 R^\beta.
\end{align}
Then, for $n,N\geq 1$,
\begin{align*}
d_{W_2}(S_n+c_n,X)&\leq  \left|\bbe X-(c_n+\sum_{k=1}^n \bbe Z_{n,k})\right|+\frac{C_{\gamma,\beta}}{n}\left(\sum_{k=1}^n\bbe |Z_{n,k}|^{\frac{\beta}{\beta+\gamma}}\right) +\frac{1}{2}\sum_{k=1}^n|\bbe Z_{n,k}|\bbe |Z_{n,k}|\nonumber\\
&\quad+2\int_{|u|>N}|u|\nu(du)+2\sum_{k=1}^n \bbe |Z_{n,k}|\bbone_{|Z_{n,k}|>N}+\frac{1}{2}\sum_{k=1}^n\int_{-N}^{+N} \left| \frac{K_\nu(t,N)}{n}-K_k(t,N)\right|dt,
\end{align*}
for some $C_{\gamma,\beta}>0$ only depending on $\gamma$ and $\beta$. 
\end{thm}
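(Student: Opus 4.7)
The plan is to mimic, nearly verbatim, the proof of Theorem~\ref{thm:GWLT}, with the only change that the scaled i.i.d.\ summand $b_n Z_k$ is replaced by the row-wise independent (but not necessarily identically distributed) summand $Z_{n,k}$, so the factor $b_n$ disappears everywhere. Fix $h\in C_c^\infty(\bbr)$ with $\|h\|_\infty,\|h'\|_\infty,\|h''\|_\infty\le 1$ and take $f_h$ from Lemma~\ref{lem:SteinEqu}; by Lemma~\ref{lem:regStein} and a direct computation on the second derivative of the semigroup solution in Section~\ref{sec:SE}, one has $\|f_h'\|_\infty\le 1$ and $\|f_h''\|_\infty\le 1/2$. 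Writing $\tilde{S}_n:=S_n+c_n$, $\tilde{S}_{n,k}:=\tilde{S}_n-Z_{n,k}$ and $\tilde{Z}_{n,k}:=Z_{n,k}-\bbe Z_{n,k}$, the independence of $\tilde{Z}_{n,k}$ and $\tilde{S}_{n,k}$ together with $\bbe\tilde{Z}_{n,k}=0$ yields
\begin{align*}
\bbe h(\tilde{S}_n)-\bbe h(X)&=\Bigl(\bbe X-c_n-\textstyle\sum_{k=1}^n\bbe Z_{n,k}\Bigr)\bbe f_h'(\tilde{S}_n)\\
&\qquad -\sum_{k=1}^n\bbe\bigl[\tilde{Z}_{n,k}\bigl(f_h'(\tilde{S}_n)-f_h'(\tilde{S}_{n,k})\bigr)\bigr]+\bbe\,\mathcal{T}(f_h)(\tilde{S}_n),
\end{align*}
in which the first term immediately contributes the leading mean-mismatch $\bigl|\bbe X-(c_n+\sum_k\bbe Z_{n,k})\bigr|$ after using $\|f_h'\|_\infty\le 1$.

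Next, I would decompose $\tilde{Z}_{n,k}=Z_{n,k}-\bbe Z_{n,k}$ in the middle sum: the $\bbe Z_{n,k}$ piece contributes $\tfrac12\sum_k|\bbe Z_{n,k}|\bbe|Z_{n,k}|$ via $|\tilde{S}_n-\tilde{S}_{n,k}|=|Z_{n,k}|$ and $\|f_h''\|_\infty\le 1/2$. For the $Z_{n,k}$ piece, truncation at level $N$ combined with the integration-by-parts identity underlying Lemma~\ref{lem:Ker} gives
\begin{align*}
\bbe\bigl[Z_{n,k}\bbone_{|Z_{n,k}|\le N}\bigl(f_h'(\tilde{S}_n)-f_h'(\tilde{S}_{n,k})\bigr)\bigr]=\int_{-N}^{+N}\bbe f_h''(\tilde{S}_{n,k}+t)K_k(t,N)\,dt,
\end{align*}
with $K_k(t,N)$ exactly as in the statement, while the tail $|Z_{n,k}|>N$ is absorbed into $2\sum_k\bbe|Z_{n,k}|\bbone_{|Z_{n,k}|>N}$ via $\|f_h'\|_\infty\le 1$. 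In parallel, Lemma~\ref{lem:Ker} applied to $\mathcal{T}(f_h)(\tilde{S}_{n,k})$ expresses it as $\int_{-N}^{+N}K_\nu(t,N)f_h''(\tilde{S}_{n,k}+t)\,dt+R_N(\tilde{S}_{n,k})$ with $|R_N|\le 2\int_{|u|>N}|u|\nu(du)$.

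Finally, I would introduce the auxiliary quantity $\tfrac1n\sum_k\mathcal{T}(f_h)(\tilde{S}_{n,k})$ and split $\bbe\mathcal{T}(f_h)(\tilde{S}_n)=\tfrac1n\sum_k\bbe\mathcal{T}(f_h)(\tilde{S}_{n,k})+\bigl[\bbe\mathcal{T}(f_h)(\tilde{S}_n)-\tfrac1n\sum_k\bbe\mathcal{T}(f_h)(\tilde{S}_{n,k})\bigr]$. The swap is controlled by Proposition~\ref{lem:RegNL}: case (i) yields $\|\mathcal{T}(f_h)\|_{Lip}\le 2\int|u|\nu(du)$, producing the $\tfrac1n\bigl(\sum_k\bbe|Z_{n,k}|\bigr)\int|u|\nu(du)$ term, whereas case (ii) yields $\beta/(\beta+\gamma)$-Hölder continuity producing the $C_{\gamma,\beta}\tfrac1n\sum_k\bbe|Z_{n,k}|^{\beta/(\beta+\gamma)}$ term. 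Collecting all pieces, the comparison between the kernels $K_k(t,N)$ and the reference $K_\nu(t,N)/n$ delivers the residual error $\tfrac12\sum_k\int_{-N}^{+N}|K_\nu(t,N)/n-K_k(t,N)|\,dt$ (factor $\tfrac12$ from $\|f_h''\|_\infty$); taking the supremum over $h\in C_c^\infty(\bbr)\cap\mathcal{H}_2$ and invoking~\eqref{eq:repcinc} gives the announced upper bound on $d_{W_2}(\tilde{S}_n,X)$. The main (essentially bookkeeping) obstacle is that the $Z_{n,k}$ are not identically distributed, so the kernel comparison must be performed index-by-index against the common reference $K_\nu(t,N)/n$; this is precisely why the error retains the $\sum_k\int|K_\nu/n-K_k|$ form rather than collapsing to a single integral as in the i.i.d.\ specialization of Corollary~\ref{cor:generalstable}.
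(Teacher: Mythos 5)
Your proposal is correct and coincides with the paper's own argument: the paper proves this theorem precisely by the "straightforward adaptation" of Lemma \ref{lem:Dec} and Theorem \ref{thm:GWLT} that you carry out, replacing $b_nZ_k$ by $Z_{n,k}$, absorbing $c_n$ into the mean-mismatch term, and reusing Lemma \ref{lem:SteinEqu}, Lemma \ref{lem:Ker}, Proposition \ref{lem:RegNL} and the representation \eqref{eq:repcinc} exactly as in the i.i.d.-scaled case. Your bookkeeping (the bounds $\|f_h'\|_\infty\le 1$, $\|f_h''\|_\infty\le \tfrac12$, the truncation at level $N$, and the index-by-index comparison of $K_k(t,N)$ with $K_\nu(t,N)/n$) matches the paper's proof of Theorem \ref{thm:GWLT} step for step.
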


%\section{Concluding Remarks}\label{sec:CR}
We finish our manuscript by briefly addressing, among many others, three possible extensions and generalizations of our current work which will be presented elsewhere. A first possible direction of future research is to solve the Stein equation associated with general infinitely divisible distributions with finite first moment (not only the self-decomposable target laws). %Recall that their characterizing operator is
%\begin{align}
%\cala_{\operatorname{gen}} f(x)=xf(x)-\sigma^2f'(x)-bf(x)-
%\int^{+\infty}_{-\infty}(f(x+u)-f(x))
%\bbone_{|u|\le 1})u\nu (du).
%\end{align}
%One step further would be to find characterizing identities (and in particular characterizing operators) for infinitely divisible distributions without finite first moment. Note that this class of ID distributions comprises the $\alpha$-stable distributions with parameter $\alpha\in (0,1]$ and in particular, the Cauchy distribution.\\
A second possible direction of research to which our methods are amenable is the study of extensions to multivariate  
(and even infinite dimensional) settings of the results presented here. (In particular, see \cite{HPAS} for the validity of the 
multivariate covariance representation.)  
A third direction would be to attempt at removing the finite first moment assumption which is present throughout our hypotheses.

\appendix
\section{Appendix}
\label{sec:appendix}
This section is devoted to the proof of two technical results used in the previous sections. 
\begin{lem}\label{lem:PointConv}
Let $X\sim ID(b,0,\nu)$ be self-decomposable with characteristic function $\varphi$ and such that $\bbe |X|<\infty$. Let $x,\xi\ne 0$. Then,
\begin{align*}
\underset{t\rightarrow 0^+}{\lim}\frac{1}{t}\left(e^{i\xi x(e^{-t}-1)}\dfrac{\varphi(\xi)}{\varphi(e^{-t}\xi)}-1\right)=\left(-x+\bbe X+\int_{-\infty}^{+\infty}\left(e^{iu\xi}-1\right)u\nu(du)\right)(i\xi).
\end{align*}
\end{lem}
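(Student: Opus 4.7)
My plan is to recognize the quantity inside the limit as a difference quotient at $t=0$ for the function
$$g(t) := e^{i\xi x(e^{-t}-1)}\,\frac{\varphi(\xi)}{\varphi(e^{-t}\xi)},$$
since $g(0)=1$. The goal is therefore to show that $g$ is right-differentiable at $0$ and to compute $g'(0^+)$; the claimed identity is then exactly $g'(0^+)$. I would establish this by expanding the two factors of $g$ separately in a neighborhood of $t=0$. The exponential factor poses no difficulty: $e^{-t}-1 = -t + O(t^2)$, hence $e^{i\xi x(e^{-t}-1)} = 1 - i\xi x\, t + O(t^2)$. The delicate factor is $\varphi(\xi)/\varphi(e^{-t}\xi)$.

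The key ingredient is a clean formula for $\varphi'/\varphi$. Under the standing assumption $\bbe|X|<+\infty$, which is equivalent to $\int_{|u|>1}|u|\nu(du)<+\infty$, the integrand in the L\'evy–Khintchine exponent may be differentiated under the integral sign, producing
$$\frac{\varphi'(\eta)}{\varphi(\eta)} = ib + i\int_{-\infty}^{+\infty}(e^{iu\eta}-\bbone_{|u|\le 1})u\,\nu(du).$$
Splitting $e^{iu\eta}-\bbone_{|u|\le 1} = (e^{iu\eta}-1) + \bbone_{|u|>1}$ and using the identity $\bbe X = b + \int_{|u|>1}u\,\nu(du)$ (obtained already in the proof of Theorem \ref{thm3.1}) rewrites this as
$$\frac{\varphi'(\eta)}{\varphi(\eta)} = i\Big(\bbe X + \int_{-\infty}^{+\infty}(e^{iu\eta}-1)u\,\nu(du)\Big),$$
where the remaining integral converges absolutely thanks to the bound $|e^{iu\eta}-1|\cdot|u|\le \min(|\eta|u^2,\,2|u|)$ together with the L\'evy condition and the first-moment assumption. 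Since $\varphi$ never vanishes on $\bbr$ (it is infinitely divisible) and $\varphi$ is continuously differentiable, the map $\eta\mapsto \varphi'(\eta)/\varphi(\eta)$ is continuous, which gives
$$\frac{\varphi(\xi)}{\varphi(e^{-t}\xi)} = 1 + \xi\,\frac{\varphi'(\xi)}{\varphi(\xi)}\,t + o(t),\qquad t\to 0^+,$$
by the chain rule applied to $t\mapsto \log\varphi(e^{-t}\xi)$ at $t=0$ (or by a direct Taylor expansion of $\varphi$ together with $1-e^{-t}=t+O(t^2)$).

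Multiplying the two expansions yields
$$g(t) = 1 + t\Big(-i\xi x + \xi\,\frac{\varphi'(\xi)}{\varphi(\xi)}\Big) + o(t),$$
and substituting the formula for $\varphi'/\varphi$ derived above gives exactly
$$g'(0^+) = i\xi\Big(-x + \bbe X + \int_{-\infty}^{+\infty}(e^{iu\xi}-1)u\,\nu(du)\Big),$$
which is the claimed limit. The only non-routine step is the justification of differentiation under the integral to obtain the formula for $\varphi'/\varphi$; this is where the hypothesis $\bbe|X|<+\infty$ is essential, since it provides the dominating function $|u|(\bbone_{|u|\le 1}+\bbone_{|u|>1})$ integrable against $\nu$ near $\pm\infty$, while the truncation $\bbone_{|u|\le 1}$ in the exponent absorbs the non-integrability of $|u|$ against $\nu$ near $0$. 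Note that self-decomposability itself is not used in this lemma; only infinite divisibility with finite first moment enters the argument.
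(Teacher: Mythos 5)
Your proof is correct, and it takes a somewhat different route from the paper's. The paper keeps the ratio $\varphi(\xi)/\varphi(e^{-t}\xi)$ in its explicit L\'evy--Khintchine form $e^{i\xi\bbe X(1-e^{-t})}\,e^{\int(e^{iu\xi}-e^{iu\xi e^{-t}}-iu\xi(1-e^{-t}))\nu(du)}$, splits the difference quotient over the three exponential factors, and computes the delicate one by applying dominated convergence directly to the $t$-difference quotient inside the $\nu$-integral (real and imaginary parts separately). You instead differentiate in the frequency variable: you establish $\varphi'(\eta)/\varphi(\eta)=i\big(\bbe X+\int(e^{iu\eta}-1)u\,\nu(du)\big)$ once, at the single point $\eta=\xi$, by differentiating the L\'evy exponent under the integral sign, and then get the expansion of $\varphi(\xi)/\varphi(e^{-t}\xi)$ by the chain rule composed with $t\mapsto e^{-t}\xi$, using that $\varphi$ never vanishes. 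Both arguments rest on the same domination (order $u^{2}$ near the origin from the L\'evy condition, order $|u|$ at infinity from $\bbe|X|<\infty$) and neither uses self-decomposability, as you correctly observe; your version is arguably cleaner because it reuses the logarithmic-derivative formula already implicit in the proof of Theorem \ref{thm3.1} (and appearing as $S_\infty=\varphi'/(i\varphi)$ in Section 4), and multiplying first-order expansions avoids the slightly informal factor-by-factor limit decomposition of the paper. One small wording slip: the ``dominating function $|u|(\bbone_{|u|\le 1}+\bbone_{|u|>1})$'' is just $|u|$, which need not be $\nu$-integrable near $0$; the correct local dominant for the truncated exponent is $|\eta|u^{2}\bbone_{|u|\le 1}+|u|\bbone_{|u|>1}$ (locally uniformly in $\eta$), which is exactly what your bound $|e^{iu\eta}-1|\,|u|\le\min(|\eta|u^{2},2|u|)$ delivers, so the argument itself is sound and only the phrasing should be adjusted.
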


\begin{proof}
Let $x,\xi\ne 0$. First, it is clear that 
\begin{align}\label{eq:firsterm}
\underset{t\rightarrow 0^+}{\lim}\frac{1}{t}(e^{i\xi x (e^{-t}-1)}-1)=-i\xi x.
\end{align}
Next, since $X$ is infinitely divisible with finite first moment, for all $t\geq 0$,
\begin{align*}
\dfrac{\varphi(\xi)}{\varphi(e^{-t}\xi)}=e^{i\xi \bbe X(1-e^{-t})}e^{\int_{-\infty}^{+\infty}\left(e^{i u \xi}-e^{i u \xi e^{-t}}-iu\xi(1-e^{-t})\right)\nu(du)}.
\end{align*}
Now, from \eqref{eq:firsterm},
\begin{align*}
\underset{t\rightarrow 0^+}{\lim}\frac{1}{t}(e^{i\xi \bbe X(1-e^{-t})}-1)=i \xi \bbe X,
\end{align*}
moreover,
\begin{align*}
\underset{t\rightarrow 0^+}{\lim}e^{\int_{-\infty}^{+\infty}\left(e^{i u \xi}-e^{i u \xi e^{-t}}-iu\xi(1-e^{-t})\right)\nu(du)}=1.
\end{align*}
So, to conclude the proof, one needs to show that
\begin{align*}
\underset{t\rightarrow 0^+}{\lim}\frac{1}{t}\left(e^{\int_{-\infty}^{+\infty}\left(e^{i u \xi}-e^{i u \xi e^{-t}}-iu\xi(1-e^{-t})\right)\nu(du)}-1\right)=\int_{-\infty}^{+\infty}\left(e^{iu\xi}-1\right)u\nu(du)(i\xi).
\end{align*}
For this purpose, let us show that 
\begin{align}\label{eq:fourterm}
\underset{t\rightarrow 0^+}{\lim}\frac{1}{t}\int_{-\infty}^{+\infty}\left(e^{i u \xi}-e^{i u \xi e^{-t}}-iu\xi(1-e^{-t})\right)\nu(du)=\int_{-\infty}^{+\infty}\left(e^{iu\xi}-1\right)u\nu(du)(i\xi).
\end{align}
For the real part of \eqref{eq:fourterm}, one wants to prove
\begin{align*}
\underset{t\rightarrow 0^+}{\lim}\frac{1}{t}\int_{-\infty}^{+\infty}\left(\cos(u\xi)-\cos( u \xi e^{-t})\right)\nu(du)=-\int_{-\infty}^{+\infty}\sin(u\xi)u\nu(du)\xi.
\end{align*}
Noting that for all $u\ne 0$
\begin{align}
\underset{t\rightarrow 0^+}{\lim}\frac{1}{t}\left(\cos(u\xi)-\cos( u \xi e^{-t})\right)=-\xi u\sin(u\xi),
\end{align}
this is a straightforward application of the dominated convergence theorem. The imaginary part of \eqref{eq:fourterm} is treated in a similar fashion.
\end{proof}

\begin{lem}\label{lem:PointBound}
For any $t\geq 0$, let $Y_t$ be an ID random variable such that $\bbe |Y_t|<+\infty$, $\bbe Y_t=0$ and with L\'evy measure
\begin{align*}
\nu_t(du)=\frac{\psi(u)-\psi(e^t u)}{|u|}du,
\end{align*}
where $\psi$ is a nonnegative function increasing on $(-\infty,0)$ and decreasing on $(0,+\infty)$ and such that
\begin{align*}
\int_{|u|\leq 1}|u|\psi(u)du<\infty,\quad\quad \int_{|u|>1} \psi(u)du<\infty.
\end{align*}
Then, for all $\xi\in\mathbb{R}$ and for all $t\in (0,1)$
\begin{align*}
\frac{1}{t}\left|\bbe e^{i\xi Y_t}-1\right|\leq C_\psi (|\xi|+|\xi|^2)
\end{align*}
for some $C_\psi>0$ only depending on $\psi$.
\end{lem}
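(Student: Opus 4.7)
The plan is to exploit the L\'evy-Khintchine representation of $Y_t$ in centered form. Since $\bbe |Y_t|<\infty$ implies $\int_{|u|>1}|u|\nu_t(du)<\infty$, and $\bbe Y_t=0$, one has
$$\bbe e^{i\xi Y_t}=\exp\bigl(\Phi_t(\xi)\bigr),\qquad \Phi_t(\xi):=\int_{\mathbb{R}}\bigl(e^{i\xi u}-1-i\xi u\bigr)\,\nu_t(du).$$
Since $|\bbe e^{i\xi Y_t}|\leq 1$, we have $\mathrm{Re}\,\Phi_t(\xi)\leq 0$, and the elementary inequality $|e^z-1|\leq|z|$ (which holds whenever $\mathrm{Re}(z)\leq 0$, via $e^z-1=\int_0^1 ze^{sz}ds$) reduces the task to proving
$$|\Phi_t(\xi)|\leq C_\psi\,t\,(|\xi|+\xi^2), \quad t\in(0,1).$$

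The next step is to split the integral defining $\Phi_t(\xi)$ into the pieces $|u|\leq 1$ and $|u|>1$. On $\{|u|\leq 1\}$, I would use the Taylor-type bound $|e^{i\xi u}-1-i\xi u|\leq \xi^2u^2/2$, reducing matters to estimating $\int_{|u|\leq 1}|u|(\psi(u)-\psi(e^tu))\,du$. Performing the change of variable $v=e^tu$ produces the identity
$$\int_0^1 u\bigl(\psi(u)-\psi(e^tu)\bigr)du=(1-e^{-2t})\int_0^1 u\psi(u)\,du-e^{-2t}\int_1^{e^t}v\psi(v)\,dv,$$
whose second term is nonnegative, so the whole expression is at most $2t\int_0^1 u\psi(u)\,du$; the negative half-line is symmetric. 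Together, the small-jump contribution is bounded by $t\xi^2$ times a constant depending only on $\int_{|u|\leq 1}|u|\psi(u)du$.

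On $\{|u|>1\}$, the crude bound $|e^{i\xi u}-1-i\xi u|\leq 2+|\xi u|$ leaves two pieces. The same substitution $v=e^tu$ produces
$$\int_1^\infty\frac{\psi(u)-\psi(e^tu)}{u}\,du=\int_1^{e^t}\frac{\psi(u)}{u}\,du,\qquad \int_1^\infty\bigl(\psi(u)-\psi(e^tu)\bigr)du=\int_1^{e^t}\psi(u)\,du+(1-e^{-t})\!\int_{e^t}^{\infty}\!\psi(u)\,du,$$
with analogous formulas on $(-\infty,-1)$. A brief auxiliary argument using monotonicity of $\psi$ together with either hypothesis $\int_{|u|\leq 1}|u|\psi(u)du<\infty$ or $\int_{|u|>1}\psi(u)du<\infty$ shows that $\psi(1^+)$ is finite; hence the integrals over $[1,e^t]$ of Lebesgue length at most $et$ are $O(t)$, and the remaining tail is $O(t)$ thanks to $1-e^{-t}\leq t$.

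Summing up, $|\Phi_t(\xi)|\leq C_\psi\, t\,(|\xi|+\xi^2)$, and the lemma follows. The principal technical difficulty I anticipate is producing the explicit factor of $t$ (rather than mere $o(1)$) from each change of variables, which forces the careful bookkeeping above; the only subsidiary issue is verifying $\psi(1^+)<\infty$ from the hypotheses, which is a short monotonicity calculation. All other manipulations are routine.
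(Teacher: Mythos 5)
Your overall route is sound and genuinely different from the paper's: you bound $|\bbe e^{i\xi Y_t}-1|=|e^{\Phi_t(\xi)}-1|\le|\Phi_t(\xi)|$ using $\mathrm{Re}\,\Phi_t\le0$ and then estimate the exponent $\Phi_t$ directly, whereas the paper first writes $|e^{\Phi_t(\xi)}-1|\le|\xi|\max_{\omega\in[0,|\xi|]}\bigl|\int u(e^{iu\omega}-1)\nu_t(du)\bigr|$, i.e.\ it differentiates in the Fourier variable so that a factor $|\xi|$ is extracted before any splitting of the L\'evy measure. Both routes then reduce to the same two quantities $\int_{|u|\le1}u^2\nu_t(du)$ and $\int_{|u|>1}|u|\nu_t(du)$, which you bound by $O(t)$ exactly as the paper does (your algebra for the small-jump term is in fact cleaner than the paper's, which contains a harmless sign slip).

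However, as written your estimates do not yield the stated inequality. On $\{|u|>1\}$ you use $|e^{i\xi u}-1-i\xi u|\le 2+|\xi u|$, and the ``$2$'' produces the term $2\nu_t(\{|u|>1\})\le C_\psi t$ carrying no factor of $|\xi|$; so what your intermediate bounds actually give is $|\Phi_t(\xi)|\le C_\psi t(1+|\xi|+\xi^2)$, and your concluding line $|\Phi_t(\xi)|\le C_\psi t(|\xi|+\xi^2)$ does not follow from them --- the claimed bound must vanish as $\xi\to0$, while yours does not. The fix is one line: on $\{|u|>1\}$ use instead $|e^{i\xi u}-1-i\xi u|\le|e^{i\xi u}-1|+|\xi u|\le 2|\xi u|$, so that the large-jump contribution is at most $2|\xi|\int_{|u|>1}|u|\nu_t(du)\le C_\psi t|\xi|$; this also renders the computation of $\nu_t(\{|u|>1\})$ unnecessary. (The paper's mean-value device avoids the issue automatically, since the prefactor $|\xi|$ is already out front.) A further small imprecision: finiteness of $\psi$ at $1$ follows from monotonicity together with $\int_{|u|\le1}|u|\psi(u)du<\infty$, but not from $\int_{|u|>1}\psi(u)du<\infty$ alone (take $\psi(u)=(u-1)^{-1/2}$ near $1^+$, which is decreasing and integrable on $(1,\infty)$ yet blows up at $1$), so ``either hypothesis'' should read ``the first hypothesis''; you do still need $\psi(1)<\infty$ to bound $\int_1^{e^t}\psi(u)du\le\psi(1)(e^t-1)$ in your tail estimate, and with that correction the argument goes through.
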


\begin{proof}
Let $\xi\ne 0$ and $t\in (0,1)$. First, since $\bbe|Y_t|<\infty$ and since $Y_t$ has zero mean,
\begin{align*}
\bbe e^{i\xi Y_t}=e^{\int_{-\infty}^{+\infty}\left(e^{i u \xi}-1-iu\xi\right)\nu_t(du)}.
\end{align*}
Then,
\begin{align*}
\left|e^{\int_{-\infty}^{+\infty}\left(e^{i u \xi}-1-iu\xi\right)\nu_t(du)}-1\right|\leq |\xi| \underset{\omega\in [0,|\xi|]}{\max}\left|\int_{-\infty}^{+\infty}u\left(e^{i u \omega}-1\right)\nu_t(du) \right|.
\end{align*}
Moreover, for $\omega\in [0,|\xi|]$
\begin{align*}
\left|\int_{-\infty}^{+\infty}u\left(e^{i u \omega}-1\right)\nu_t(du) \right|\leq |\omega|\int_{|u|\leq 1}|u|^2\nu_t(du)+2\int_{|u|>1}|u|\nu_t(du).
\end{align*}
Let us bound the two terms $\int_{|u|\leq 1}|u|^2\nu_t(du)$ and $\int_{|u|>1}|u|\nu_t(du)$. It is sufficient for our purpose to consider these integrals on $(0,+\infty)$ since similar arguments provide the same type of bounds on $(-\infty, 0)$. For the first term,
\begin{align}
\int_0^{1}u\left(\psi(u)-\psi(e^t u)\right)du&=\int_0^1u\psi(u)du-\int_0^1u\psi(e^t u)du\nonumber\\
&=\int_0^1u\psi(u)du-e^{-2t}\int_0^{e^{t}}u\psi(u)du\nonumber\\
&=\int_1^{e^t}u\psi(u)du+(1-e^{-2t})\int_0^{e^t} u\psi(u)du\nonumber\\
&\leq \underset{1<u<e}{\sup}|u \psi(u)|(e^t-1)+(1-e^{-2t})\int_0^e u\psi(u)du.\nonumber
\end{align}
Thus,
\begin{align*}
\int_{|u|\leq 1}|u|^2\nu_t(du)\leq C_\psi ((e^t-1)+(1-e^{-2t})).
\end{align*}
for some constant $C_\psi>0$ only depending on $\psi$. For the second term
\begin{align}
\int_1^{+\infty} (\psi(u)-\psi(e^t u))du&=\int_1^{+\infty}\psi(u)du-e^{-t}\int_{e^t}^\infty \psi( u)du\nonumber\\
&\leq \int_1^{e^t}\psi(u)du+(1-e^{-t})\int_{1}^\infty \psi(u)du\nonumber\\
&\leq C_\psi \left((e^t-1)+(1-e^{-t})\right).\nonumber
\end{align}
The conclusion of the lemma then follows.
\end{proof}

\begin{lem}\label{lem: repsmwa	s}
Let $r\geq 1$, and let $X$ and $Y$ be two random variables. Then,
\begin{align}
d_{W_r}(X,Y)=\underset{h\in C^{\infty}_c(\mathbb{R})\cap\mathcal{H}_r}{\sup}\left|\bbe\, h(X)-\bbe\, h(Y)\right|.
\end{align}
\end{lem}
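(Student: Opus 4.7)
The inequality $\sup_{h \in C^\infty_c(\mathbb{R}) \cap \mathcal{H}_r}|\bbe\, h(X) - \bbe\, h(Y)| \leq d_{W_r}(X,Y)$ is immediate from the inclusion $C^\infty_c(\mathbb{R}) \cap \mathcal{H}_r \subseteq \mathcal{H}_r$, so the content of the lemma is the reverse inequality. My plan is to show that every $h \in \mathcal{H}_r$ can be approximated, for the purpose of integrating against the laws of $X$ and $Y$, by an element $\tilde h \in C^\infty_c(\mathbb{R}) \cap \mathcal{H}_r$. The approximation will be built in two stages: first a mollification to obtain smoothness, then a truncation to obtain compact support, combined with a small renormalization so that the $\mathcal{H}_r$ derivative bounds are preserved.

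For the mollification, I would fix a nonnegative bump $\rho \in C^\infty_c(\mathbb{R})$ with $\int \rho = 1$ and $\mathrm{supp}(\rho) \subset [-1,1]$, set $\rho_\delta(\cdot) := \delta^{-1}\rho(\cdot/\delta)$, and define $h_\delta := h \ast \rho_\delta$. Since $h \in C^r(\mathbb{R})$ and differentiation commutes with convolution, $h_\delta^{(k)} = h^{(k)} \ast \rho_\delta$ for $0 \leq k \leq r$, so $\|h_\delta^{(k)}\|_\infty \leq \|h^{(k)}\|_\infty \leq 1$, yielding $h_\delta \in C^\infty(\mathbb{R}) \cap \mathcal{H}_r$. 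Uniform boundedness combined with $h_\delta \to h$ pointwise allows dominated convergence to give $\bbe h_\delta(X) \to \bbe h(X)$ and similarly for $Y$ as $\delta \to 0$.

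For the truncation, I would choose $\chi \in C^\infty_c(\mathbb{R})$ with $0 \leq \chi \leq 1$, $\chi \equiv 1$ on $[-1,1]$ and $\mathrm{supp}(\chi) \subset [-2,2]$, and set $\chi_R(x) := \chi(x/R)$, so that $\|\chi_R^{(j)}\|_\infty = R^{-j}\|\chi^{(j)}\|_\infty$. Then for $g_{\delta,R} := h_\delta \chi_R \in C^\infty_c(\mathbb{R})$, Leibniz's rule yields
\begin{align*}
\|g_{\delta,R}^{(k)}\|_\infty \leq 1 + \sum_{j=0}^{k-1}\binom{k}{j} R^{-(k-j)}\|\chi^{(k-j)}\|_\infty, \qquad 0 \leq k \leq r,
\end{align*}
and the maximum of the right-hand side over $0 \leq k \leq r$, call it $M_R$, tends to $1$ as $R \to \infty$. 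Consequently $\tilde h_{\delta,R} := g_{\delta,R}/M_R$ lies in $C^\infty_c(\mathbb{R}) \cap \mathcal{H}_r$.

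Finally, I would split $|\bbe h(X) - \bbe \tilde h_{\delta,R}(X)|$ into three pieces: a mollification error $|\bbe h(X) - \bbe h_\delta(X)|$, a truncation error $|\bbe\{h_\delta(X)(1-\chi_R(X))\}| \leq \bbp(|X| > R)$, and a normalization error $|1 - 1/M_R|\cdot|\bbe g_{\delta,R}(X)| \leq |1-1/M_R|$. Each tends to zero upon first sending $\delta \to 0$ and then $R \to \infty$, and the same estimates hold with $Y$ in place of $X$. Given any $\epsilon > 0$, this supplies $\tilde h_{\delta,R} \in C^\infty_c(\mathbb{R}) \cap \mathcal{H}_r$ with $|\bbe h(X) - \bbe h(Y)| \leq 2\epsilon + |\bbe \tilde h_{\delta,R}(X) - \bbe \tilde h_{\delta,R}(Y)|$; taking the supremum over such $h$ and letting $\epsilon \downarrow 0$ closes the argument. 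I expect the truncation step to be the main subtlety, since pointwise multiplication by a cutoff does not preserve the $\mathcal{H}_r$ bounds: the Leibniz rule introduces derivatives of $\chi_R$, and this is precisely why the dilation $\chi_R = \chi(\cdot/R)$ is used, so that the spurious terms are $O(1/R)$ and can be absorbed into the renormalization factor $M_R \to 1$.
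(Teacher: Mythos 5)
Your proposal is correct and follows essentially the same route as the paper's proof: mollify $h$ by convolution to preserve the $\mathcal{H}_r$ bounds, multiply by a dilated smooth cutoff so that the Leibniz cross-terms are $O(1/R)$, absorb them by a renormalization factor tending to $1$, control the truncation error by the tails of the laws of $X$ and $Y$, and pass to the limit. The only cosmetic difference is that the paper keeps the factor $\bigl(1+C_r\sum_{p=1}^{r}M^{-p}\bigr)$ multiplying the supremum instead of explicitly dividing the test function by it, which is the same normalization argument.
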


\begin{proof}
Let $r\geq 1$ and $X$ and $Y$ be two random variables with respective laws $\mu_X$ and $\mu_Y$. First, note that
\begin{align*}
d_{W_r}(X,Y)\geq \underset{h\in C^{\infty}_c(\mathbb{R})\cap\mathcal{H}_r}{\sup}\left|\bbe\, h(X)-\bbe\, h(Y)\right|.
\end{align*}
Now, let $h$ be in $\mathcal{H}_r$. Let $(h_\varepsilon)_{\varepsilon>0}$ be a regularization of $h$ by convolution such that
\begin{align*}
\|h-h_\varepsilon\|_{\infty}\leq \varepsilon,\quad \|h^{(k)}_\varepsilon\|_\infty\leq 1,\quad 0\leq k\leq r.
\end{align*}
Let $\phi$ be a compactly supported infinitely differentiable even function with values in $[0,1]$ and such that $\phi(x)=1$, for $x\in [-1,1]$. Let $M\geq 1$ and set $\phi_M(x)=\phi(x/M)$, for all $x\in \mathbb{R}$. Next, denote by $h_{M,\varepsilon}$ the $C^{\infty}_c(\mathbb{R})$ function defined, for all $x\in \mathbb{R}$, via
\begin{align*}
h_{M,\varepsilon}(x)=\phi_M(x)h_\varepsilon(x).
\end{align*}
Then,
\begin{align*}
\left| \bbe\, h(X)-\bbe\, h(Y)\right| &\leq \left| \bbe\, h_{M,\varepsilon}(X)-\bbe\, h_{M,\varepsilon}(Y)\right|+\bbe\left|h(X)-h_{M,\varepsilon}(X) \right|+\bbe\left|h(Y)-h_{M,\varepsilon}(Y) \right|\\
&\leq \left| \bbe h_{M,\varepsilon}(X)-\bbe h_{M,\varepsilon}(Y)\right|+\bbe\left|h(X)-h_{\varepsilon}(X) \right|+\bbe\left|h_\varepsilon(X)-h_{M,\varepsilon}(X) \right|\\
&\quad\quad+\bbe\left|h(Y)-h_{\varepsilon}(Y) \right|+\bbe\left|h_\varepsilon(Y)-h_{M,\varepsilon}(Y) \right|\\
&\leq \left| \bbe\, h_{M,\varepsilon}(X)-\bbe\, h_{M,\varepsilon}(Y)\right|+2\varepsilon+\bbe\left|h_\varepsilon(X)-h_{M,\varepsilon}(X) \right|+\bbe\left|h_\varepsilon(Y)-h_{M,\varepsilon}(Y) \right|\\
&\leq \left| \bbe\, h_{M,\varepsilon}(X)-\bbe\, h_{M,\varepsilon}(Y)\right|+2\varepsilon+\int_{\mathbb{R}} \left|1-\phi_M(x)\right|d\mu_X(x)+\int_{\mathbb{R}} \left|1-\phi_M(y)\right|d\mu_Y(y).
\end{align*}
Now, choosing $M\geq 1$ large enough so that 
\begin{align*}
\int_{\mathbb{R}} \left|1-\phi_M(x)\right|d\mu_X(x)+\int_{\mathbb{R}} \left|1-\phi_M(y)\right|d\mu_Y(y)\leq 2\varepsilon,
\end{align*}
it follows that, for such $M\geq 1$,
\begin{align*}
\left| \bbe h(X)-\bbe h(Y)\right| &\leq \left| \bbe h_{M,\varepsilon}(X)-\bbe h_{M,\varepsilon}(Y)\right|+4\varepsilon.
\end{align*}
Moreover, for all $x\in \mathbb{R}$, for all $M\geq 1$ and for all $\varepsilon>0$
\begin{align*}
|h_{M,\varepsilon}(x)| \leq 1,
\end{align*}
while, for all $x\in \mathbb{R}$ and for all $1\leq k\leq r$,
\begin{align*}
|h^{(k)}_{M,\varepsilon}(x)|&\leq \sum_{p=0}^k \binom{k}{p} |h^{(k-p)}_\varepsilon(x)| |\phi^{(p)}_M(x)|\\
& \leq  \left(1+C_k\sum_{p=1}^k\frac{1}{M^p}\right),
\end{align*}
for some $C_k>0$ which only depends on $k$ and on $\phi$. Thus, 
\begin{align*}
\left| \bbe h(X)-\bbe h(Y)\right| \leq  \left(1+C_r\sum_{p=1}^r \frac{1}{M^p}\right)\underset{h\in C^{\infty}_c(\mathbb{R})\cap\mathcal{H}_r}{\sup}\left|\bbe h(X)-\bbe h(Y)\right|+4\varepsilon,
\end{align*}
for some appropriate constant $C_r>0$ only depending on $r>0$ and on $\phi$. Letting first $M\rightarrow +\infty$ and then, $\varepsilon\rightarrow 0^+$ gives the result.
\end{proof}

\end{document}